\theoremstyle{plain}
\numberwithin{equation}{section}
\newtheorem{theorem}{Theorem}[section] 
\newtheorem{lemma}{Lemma}[section] 
\newtheorem{proposition}{Proposition}[section]
\newtheorem{remark}{Remark}[section]
\definecolor{brown}{rgb}{0.5,0,0}
\definecolor{backgroundcolor}{rgb}{0.98, 0.92, 0.73}
\def\R{\mathbf R}
\def\C{\mathbb C}
\def\bB{\mathcal B} 
\def\cM{\mathcal M}
\def\cL{\mathcal L} 
\def\cE{\mathcal E} 
\def\bT{\mathbb T} 
\def\Csf{\mathsf c} 
\newif\ifprint
\author[Tiến-Tài Nguyễn]{Tiến-Tài Nguyễn }
\address[Tiến-Tài Nguyễn]{Laboratoire Analyse G\'eom\'etrie et Applications, Universit\'e Sorbonne Paris Nord,  93430 - Villetaneuse, France}
\email{\href{mailto: Tiến-Tài Nguyễn <tientai.nguyen@math.univ-paris13.fr>}{tientai.nguyen@math.univ-paris13.fr}}
\begin{document}
\allowdisplaybreaks

\setpagewiselinenumbers
\setlength\linenumbersep{100pt}

\title[Rayleigh-Taylor instability]{Linear and nonlinear analysis of the viscous Rayleigh-Taylor system with Navier-slip boundary conditions}

\begin{abstract}
In this paper, we are interested in the linear and the nonlinear Rayleigh-Taylor instability for the  gravity-driven incompressible Navier-Stokes equations with Navier-slip boundary conditions around a smooth increasing density profile $\rho_0(x_2)$ in a slab domain $2\pi L\bT \times (-1,1)$ ($L>0$, $\bT$ is the usual 1D torus).  The linear instability study of the viscous Rayleigh-Taylor model amounts to the study of the following ordinary differential equation  on the finite interval  $(-1,1)$, 
\begin{equation}\label{EqMain}
-\lambda^2 [ \rho_0 k^2 \phi - (\rho_0 \phi')'] = \lambda \mu (\phi^{(4)} - 2k^2 \phi'' + k^4 \phi) - gk^2 \rho_0'\phi,
\end{equation} with the boundary conditions 
\begin{equation}\label{4thBound}
\begin{cases}
\phi(-1)=\phi(1)=0,\\
\mu \phi''(1) = \xi_+ \phi'(1), \\
\mu \phi''(-1) =- \xi_- \phi'(-1),
\end{cases}
\end{equation}
where  $\lambda>0$ is the growth rate in time, $g>0$ is the gravity constant, $k$ is the wave number  and two Navier-slip coefficients  $\xi_{\pm}$ are   nonnegative constants.  For each $k\in L^{-1}\mathbb{Z}\setminus\{0\}$, we  define a threshold of viscosity coefficient $\mu_c(k,\Xi)$ for linear instability. So that, in the $k$-supercritical regime, i.e. $\mu>\mu_c(k,\Xi)$, we  describe a spectral analysis adapting the operator method initiated by Lafitte-Nguyễn  \cite{LN20} and then prove that there are infinite  nontrivial solutions $(\lambda_n, \phi_n)_{n\geq 1} $ of \eqref{EqMain}-\eqref{4thBound} with $\lambda_n \to 0$ as $n\to \infty$ and $\phi_n\in H^4((-1,1))$.   Based on the existence of infinitely many normal  modes of the linearized problem, we construct  a wide class of initial data to the nonlinear equations, extending the previous framework of Guo-Strauss \cite{GS95} and of Grenier \cite{Gre00}, to prove the nonlinear Rayleigh-Taylor instability in a high regime of viscosity coefficient, namely $\mu >3\sup_{k\in L^{-1}\mathbb{Z}\setminus\{0\}}\mu_c(k,\Xi)$.
\end{abstract}

\date{\bf \today \; at \, \currenttime}

\subjclass[2010]{34B07, 47A10, 47B07, 76D05, 76E30}

\keywords{Navier–Stokes equations, linear growth rate, spectral analysis, Rayleigh--Taylor instability}

\maketitle

\tableofcontents

\section{Introduction}

The Rayleigh--Taylor (RT) instability, studied first by Lord Rayleigh in \cite{Str83} and then Taylor \cite{Tay50} is well known as a gravity-driven instability in two semi-infinite inviscid and incompressible  fluids when the heavy one is on top of the light one. It  has attracted much attention due to both its physical and mathematical importance. Two applications  worth mentioning are implosion of inertial confinement fusion capsules \cite{Lin98} and core-collapse of supernovae  \cite{Rem00}.  For a detailed physical comprehension of the RT instability, we refer to three survey papers  \cite{Kull91, Zhou17_1, Zhou17_2}. Mathematically speaking,  for the inviscid and incompressible regime with a smooth density profile, the classical  RT instability was investigated by Lafitte \cite{Laf01}, by Guo and Hwang \cite{GH03} and by  Helffer and Lafitte \cite{HL03}.

Concerning the viscous RT instability, one of the first studies can be found in  the book of Chandrasekhar \cite[Chap. X]{Cha61}. He considered two uniform viscous fluid separated by a horizontal boundary and  generalized the classical result of Rayleigh and Taylor. We refer the readers to   mathematical viscous RT studies for  two compressible channel flows by Guo and Tice \cite{GT11}, for incompressible fluid in the whole space $\R^3$ by Jiang et. al   \cite{JJN13} and Lafitte and Nguyễn \cite{LN20}, respectively.

In this paper, we are concerned with the viscous RT of the nonhomogeneous incompressible Navier-Stokes equations with gravity in a 2D slab domain $\Omega= 2\pi L \mathbb{T} \times (-1,1)$ with $L>0$ and $\mathbb{T}$ is the 1D-torus, that read as
\begin{equation}\label{EqNS}
\begin{cases}
\partial_t \rho +\text{div}(\rho \vec u) =0,\\
\partial_t(\rho \vec u) + \text{div}(\rho \vec u\otimes \vec u) +\nabla P =\mu \Delta\vec u - \rho \vec{g},\\
\text{div}\vec u=0,
\end{cases}
\end{equation}
where $t\geqslant 0, \vec x=(x_1,x_2) \in 2\pi L\mathbb{T} \times (-1,1)$. The unknowns $\rho := \rho(t,\vec x)$, $\vec u :=  \vec u(t, \vec x)$ and $P :=P(t,\vec x)$ denote the density, the velocity and the pressure of the fluid, respectively, while $\mu$ is the viscosity coefficient and $ \vec{g}:= g{\vec e}_2$,  $g > 0$ being the gravitational constant.  Let $\Sigma_\pm =2\pi L \mathbb{T} \times \{\pm 1\}$,  the Navier-slip boundary conditions  proposed by  Navier (see \cite{Nav23}) are given  on $\Sigma_\pm $ as follows
\begin{equation}\label{EqNavierBoundary}
\begin{split}
\vec u \cdot \vec n&=0 \quad\text{on }\Sigma_+\cup\Sigma_-, \\
(\mu(\nabla \vec u+\nabla \vec u^T)\cdot \vec n )_{\tau}&= \xi(\vec x)\vec u \quad\text{on }\Sigma_+\cup\Sigma_-.
\end{split}
\end{equation}
Here, $\vec n$ is the outward normal vector of the boundary, $(\mu(\nabla \vec u+\nabla \vec u^T)\cdot \vec n )_{\tau}$ is the tangential component of $\mu(\nabla \vec u+\nabla \vec u^T)\cdot \vec n$ and $\xi(\vec x)$ is a scalar function describing the slip effect on the boundary,  only  taking \textit{nonnegative} constant values $\xi_{\pm}$  on $\Sigma_\pm $, respectively.

Let $\rho_0$ and $P_0$ be two $C^1$-functions on $x_2$ such that $P_0'=-g\rho_0$ with $'=d/dx_2$. Then, the laminar flow $(\rho_0(x_2),\vec 0, P_0(x_2))$ is a steady-state solution of \eqref{EqNS}. Of interest of this paper is to study the nonlinear  instability of the above laminar flow to Eq. \eqref{EqNS}-\eqref{EqNavierBoundary} that satisfies
\begin{equation}\label{RhoAssume}
\rho_0 \in C^1([-1,1]),  \quad \rho_0'>0 \text{ on } [-1,1], \quad \rho_0(\pm 1)=\rho_{\pm} \in (0,+\infty),
\end{equation}
i.e. to study the nonlinear Rayleigh--Taylor instability problem.

Linearizing \eqref{EqNS} in the vicinity of  $(\rho_0(x_2),\vec 0, P_0(x_2))$  and then seeking a  normal mode at a horizontal spatial frequency $k \in L^{-1}\mathbb{Z}\setminus\{0\}$ of the form 
\[
e^{\lambda( k) t} \vec U(\vec x) =e^{\lambda(k) t} (\cos(kx_1)\omega(x_2), \sin(kx_1)\theta(x_2),\cos(kx_1)\phi(x_2),\cos(kx_1)q(x_2))^T,
\]
the linear RT instability amounts to the investigation of the parameter $\lambda( k)\in \mathbb{C}$ (Re$\lambda>0$) such that  there exists a nontrivial solution $\phi \in H^4((-1,1))$ of the following ordinary differential equation for the second component of velocity 
\begin{equation}\label{MainEq}
-\lambda^2 (\rho_0 k^2 \phi - (\rho_0 \phi')') = \lambda \mu (\phi^{(4)} - 2k^2 \phi'' + k^4 \phi) - gk^2 \rho_0'\phi,
\end{equation}
 with the boundary conditions 
\begin{equation}\label{NavierBound}
\begin{cases}
\phi(-1)=\phi(1)=0,\\
\mu \phi''(1) = \xi_+ \phi'(1), \\
\mu \phi''(-1) =- \xi_- \phi'(-1).
\end{cases}
\end{equation}
Note that the embedding $H^4((-1,1)) \hookrightarrow C^3((-1,1))$ allows us to write \eqref{NavierBound}.
In this case, such a $\lambda$  is called a growth rate of the instability or a characteristic value of the linearized problem (see Eq. \eqref{SystemModes} below) as in \cite[Sect. 92-93, Chap. X]{Cha61}). We will present the derivation of the physical model in Section \ref{SectMainResults}. 

As the density profile is increasing, we  first show that  $\lambda$ is always real in Lemma \ref{LemEigenvalueReal}. Since our goal is to study the instability, we are left to look for $\lambda>0$.  Hence, for the linear instability, we continue  the spectral analysis of Helffer and Lafitte \cite{HL03}, Lafitte and Nguyễn \cite{LN20} for Eq. \eqref{MainEq}-\eqref{NavierBound}.  

For any horizontal spatial frequency $k \in L^{-1}\mathbb{Z}\setminus \{0\}$, we then define a $k$-supercritical regime of the viscosity coefficient $\mu >\mu_c(k,\Xi)$ (see $\mu_c(k,\Xi)$ in Proposition \ref{PropMuC}), we  prove that  there exists an infinite sequence of  characteristic values $(\lambda_n(k,\mu))_{n\geq 1}$, decreasing towards 0 as $n\to \infty$. This is stated in Theorem \ref{MainThm}.

The second goal, described in Section \ref{SectProofNonLinear} is to obtain a nonlinear instability result on more general initial data using the linear result of Theorem \ref{MainThm} (see \eqref{GeneralInitialCond}) and working in the regime $\mu>3\sup_{k\in  L^{-1}\mathbb{Z}\setminus \{0\}}\mu_c(k,\Xi)$.  The classical way of proving the nonlinear instability is to estimate the difference between the solution to the nonlinear problem and the normal mode solution to the linearized problem.  In  order to show that, in the spirit of Guo-Strauss \cite{GS95} and Grenier \cite{Gre00}, only the maximal normal mode  $ e^{\lambda_1 t} \vec U_1(\vec x)$ was taken  to derive a solution of the nonlinear  equation whose initial datum is $\delta \vec U_1(\vec x)$ with $0<\delta\ll 1$.  Our nonlinear result, Theorem \ref{ThmNonlinear}, generalizes the previous results of Guo-Strauss and of Grenier, by showing that a wide class of initial data (related to a linear combination of normal modes) to the nonlinear problem departing from the equilibrium gives rise to the nonlinear instability. 

This paper is organized as follows. In Section \ref{SectMainResults}, we present the governing equations and state the main results. Section \ref{Preliminaries} is devoted to some materials for the linear study. Then, in Section \ref{SectProofThmLinear}, we prove the linear instability, i.e. Theorem \ref{MainThm}.  Section \ref{SectProofNonLinear} is to prove the nonlinear instability, i.e. Theorem \ref{ThmNonlinear}. 

\section{Main results}\label{SectMainResults}

\subsection{The governing equations}

Let us recall the steady state $(\rho_0(x_2), \vec 0, P_0(x_2))$ of \eqref{EqNS}, with $\rho_0$ satisfies \eqref{RhoAssume} and $P_0'=-g\rho_0$.
We now derive the linearization of Eq. \eqref{EqNS} around the equilibrium state  $(\rho_0(x_2), \vec 0,P_0(x_2))$.  The perturbations 
\[\sigma =\rho-\rho_0,\quad \vec u = \vec u- \vec 0, \quad p = P-P_0\]
 thus satisfy
\begin{equation}\label{EqPertur}
\begin{cases}
\partial_t \sigma + \vec u \cdot \nabla(\rho_0+\sigma)  =0,\\
(\rho_0+\sigma) \partial_t \vec u + (\rho_0+\sigma) \vec u \cdot \nabla \vec u +\nabla p =\mu \Delta \vec u - \sigma \vec g,\\
\text{div} \vec v=0.
\end{cases}
\end{equation}
Note that  $(\mu(\nabla \vec u+\nabla \vec u^T)\cdot \vec n )_{\tau} = \vec n \times (\mu(\nabla \vec u+\nabla \vec u^T)\cdot \vec n) \times \vec n$ and that $\vec n = (0,\pm 1)^T$. Hence, the boundary conditions are 
\begin{equation}\label{BoundLinearized}
\begin{cases}
u_2=0,&\quad\text{on } \Sigma_\pm, \\
\mu\partial_{x_2} u_1= \xi_+ u_1 &\quad\text{on } \Sigma_+,\\
\mu\partial_{x_2} u_1=-\xi_- u_1 &\quad\text{on } \Sigma_-.
\end{cases}
\end{equation}
The linearized equations are
\begin{equation}\label{EqLinearized}
\begin{cases}
\partial_t \sigma +  \rho_0' u_2=0,\\
\rho_0 \partial_t \vec u + \nabla p = \mu \Delta \vec u - \sigma \vec g,\\
\text{div}\vec u=0,
\end{cases}
\end{equation}
and the corresponding  boundary conditions remaining \eqref{BoundLinearized}.

The linear RT instability problem is   to seek a normal mode of the form 
\begin{equation}\label{NormalModes}
\begin{cases}
\sigma(t,\vec x) =  e^{\lambda t} \cos(kx_1)\omega(x_2),\\
u_1(t,\vec x) =  e^{\lambda t}\sin(kx_1)\theta(x_2),\\
u_2(t,\vec x)=  e^{\lambda t}\cos(kx_1)\phi(x_2),\\
q(t,\vec x) =  e^{\lambda t}\cos(kx_1)q(x_2).
\end{cases}
\end{equation}
where $k\in L^{-1}\mathbb{Z}\setminus \{0\}$, $\lambda \in \C \setminus \{0\}$ and $\text{Re} \lambda \geqslant 0$. 
It follows from \eqref{EqLinearized} that
\begin{equation}\label{SystemModes}
\begin{cases}
\lambda \omega +\rho_0'\phi=0,\\
\lambda \rho_0 \theta - k q + \mu (k^2 \theta- \theta'')=0,\\
\lambda \rho_0 \phi + q' +  \mu (k^2 \phi-\phi'')= -g\omega,\\
 k \theta+ \phi'=0
\end{cases}
\end{equation}
and from \eqref{BoundLinearized} that
\begin{equation}\label{BoundEigenfunctions}
\phi(\pm 1)=0, \quad \mu \theta'(1) = \xi_+ \theta(1), \quad \mu \theta'(-1)= -\xi_- \theta(-1).
\end{equation}
We  obtain 
\begin{equation}\label{EqPressure}
\begin{split}
\omega &= -\frac{\rho_0'}{\lambda}\phi, \quad \theta =-\frac1k \phi', \quad q = -\frac1{k^2} (\lambda\rho_0\phi' + \mu(k^2\phi'-\phi''')).
\end{split}
\end{equation}
Then, we substitute $q, \omega$ into $\eqref{SystemModes}_3$ to get a fourth-order ordinary differential equation \eqref{MainEq}. We have the boundary conditions \eqref{NavierBound} deduced from \eqref{BoundLinearized}, which are obtained by assuming the solution to be in $C^2([-1,1])$.

\subsection{Main results}

Before stating our main results, we present our material for the linearized equations. 

When the density profile $\rho_0$ is increasing, we  show that all characteristic values $\lambda$ are real. Let $L_0$ be the characteristic length, such that $L_0^{-1} = \|\frac{\rho_0'}{\rho_0}\|_{L^\infty((-1,1))}$, we further obtain the uniform upper bound $\sqrt{\frac{g}{L_0}}$ of $\lambda$.
\begin{lemma}\label{LemEigenvalueReal}
For any $ k \in L^{-1}\mathbb{Z}\setminus \{0\}$, 
\begin{itemize}
\item all  characteristic values $\lambda$ are always real,
\item all  characteristic values $\lambda$ satisfy that $\lambda \leqslant \sqrt{\frac{g}{L_0}}$.
\end{itemize}
\end{lemma}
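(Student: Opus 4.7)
The plan is to convert the boundary-value problem \eqref{MainEq}--\eqref{NavierBound} into a quadratic algebraic identity for $\lambda$ via an energy method, and then read off both bullets from the signs of its coefficients. Concretely, I would multiply $\eqref{SystemModes}_2$ by $\bar{\theta}$ and $\eqref{SystemModes}_3$ by $\bar{\phi}$, integrate over $(-1,1)$, and add the two identities: the cross contributions proportional to $\int q\bar{\theta}$ cancel thanks to $\eqref{SystemModes}_4$ and $\phi(\pm 1)=0$, while the integrations by parts consume $\phi(\pm 1)=0$ together with the Robin-type conditions $\mu\theta'(\pm 1)=\pm\xi_{\pm}\theta(\pm 1)$, thereby generating the boundary terms $\xi_{\pm}|\theta(\pm 1)|^2$. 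After eliminating $\omega=-\rho_0'\phi/\lambda$ via $\eqref{SystemModes}_1$, I arrive at
\[
\lambda^2 I_1 + \lambda(I_2 - I_3) - I_4 = 0,
\]
where the four real nonnegative quantities are
\[
I_1 = \int_{-1}^{1}\rho_0(|\theta|^2+|\phi|^2),\qquad I_4 = g\int_{-1}^{1}\rho_0'|\phi|^2,
\]
\[
I_2 = \mu\int_{-1}^{1}\bigl(k^2(|\theta|^2+|\phi|^2)+|\theta'|^2+|\phi'|^2\bigr),\qquad I_3 = \xi_+|\theta(1)|^2+\xi_-|\theta(-1)|^2,
\]
with $I_1>0$ for any nontrivial eigenfunction.

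For the first bullet, the above is a quadratic polynomial in $\lambda$ with real coefficients whose discriminant $(I_2-I_3)^2 + 4I_1 I_4$ is visibly nonnegative, forcing $\lambda$ to be real. Moreover, the product of its two roots equals $-I_4/I_1 \leqslant 0$, so they are of opposite sign (or one vanishes), and the defining constraint $\mathrm{Re}\,\lambda \geqslant 0$ of a growing normal mode picks out the nonnegative root.

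For the second bullet I would use the definition of $L_0$, namely $\rho_0' \leqslant \rho_0/L_0$ pointwise, to obtain $I_4 \leqslant (g/L_0)\int\rho_0|\phi|^2 \leqslant (g/L_0)I_1$. Rewriting the identity as $\lambda^2 I_1 = I_4 + \lambda(I_3 - I_2)$, the clean case $I_2 \geqslant I_3$ immediately yields $\lambda^2 \leqslant g/L_0$. The main obstacle is that the Navier-slip boundary contribution $I_3$ is not obviously controlled by the viscous dissipation $I_2$; this is the delicate step. I would close the gap using a one-dimensional trace inequality of the form $|\theta(\pm 1)|^2 \leqslant \tfrac{1}{2}\int_{-1}^{1}|\theta|^2 + 2\int_{-1}^{1}|\theta||\theta'|$ combined with Cauchy--Schwarz and Young's inequality with a carefully chosen parameter, so as to absorb $\xi_{\pm}|\theta(\pm 1)|^2$ into $I_2$ and conclude $\lambda \leqslant \sqrt{g/L_0}$.
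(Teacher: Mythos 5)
Your energy identity is correct, and it is in substance the paper's identity \eqref{EqVariational}: multiplying \eqref{MainEq} by $\bar\phi$ and integrating by parts (or, as you do, testing $\eqref{SystemModes}_2$, $\eqref{SystemModes}_3$ against $\bar\theta$, $\bar\phi$ and using $\eqref{SystemModes}_4$), then dividing by $k^2$ and writing $\theta=-\phi'/k$, gives exactly $\lambda^2 I_1+\lambda(I_2-I_3)-I_4=0$ with your $I_1,\dots,I_4$. For the first bullet, your discriminant argument (real coefficients, $I_1>0$, discriminant $(I_2-I_3)^2+4I_1I_4\ge 0$, product of roots $-I_4/I_1\le 0$, plus the convention $\mathrm{Re}\,\lambda\ge 0$ built into \eqref{NormalModes}) is a valid and slightly cleaner repackaging of the paper's splitting of \eqref{EqRealPart}--\eqref{EqImaginaryPart} into real and imaginary parts; this part is fine and essentially the same argument.

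The gap is in the second bullet. The absorption you propose --- controlling $I_3=\xi_+|\theta(1)|^2+\xi_-|\theta(-1)|^2$ by the dissipation $I_2$ via a trace inequality and Young's inequality --- cannot be carried out for arbitrary $\mu>0$. Writing $\theta=-\phi'/k$, the inequality $I_3\le I_2$ for all admissible functions is precisely $\xi_-|\phi'(-1)|^2+\xi_+|\phi'(1)|^2\le \mu\int_{-1}^1\big(|\phi''|^2+2k^2|\phi'|^2+k^4|\phi|^2\big)\,dx_2$, and by \eqref{MuC_kMax} the sharp constant in this inequality is exactly $\mu_c(k,\Xi)$, which Proposition \ref{PropMuC} shows is strictly positive whenever $\xi_++\xi_->0$. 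Hence no choice of trace/Young parameters can absorb $I_3$ into $I_2$ when $\mu<\mu_c(k,\Xi)$: your argument only closes in the regime $\mu\ge\mu_c(k,\Xi)$, i.e. precisely the ``clean case'' $I_2\ge I_3$ you had already handled, so the proposed fix adds nothing for small $\mu$. For what it is worth, the paper's own passage from \eqref{EqRealPart} to $\lambda^2\int_{-1}^1\rho_0(k^2|\phi|^2+|\phi'|^2)\,dx_2\le gk^2\int_{-1}^1\rho_0'|\phi|^2\,dx_2$ silently discards the term $\lambda\big(\mu\int(\cdots)-\xi_-|\phi'(-1)|^2-\xi_+|\phi'(1)|^2\big)$ and thus relies on the same nonnegativity, which is guaranteed in the regime $\mu>\mu_c(k,\Xi)$ used everywhere else in the paper; but as a proof of the upper bound for all $\mu>0$, the trace-inequality route you sketch would fail, and you would need either to restrict to $\mu\ge\mu_c(k,\Xi)$ or to find a genuinely different argument exploiting that $(\theta,\phi)$ is an actual eigenfunction rather than an arbitrary test pair.
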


Proof of Lemma \ref{LemEigenvalueReal} is given in Section \ref{AppRealEigenvalue_Navier}. In view of Lemma \ref{LemEigenvalueReal}, we look for functions $\phi$ being real and we only consider the vector space of real functions in what follows.

We now study the linearized problem, i.e. \eqref{MainEq}-\eqref{NavierBound}.  Of importance  is to construct a bilinear coercive form $\bB_{k,\lambda,\mu}$ as $\lambda\geq 0 $ and $k\in \R\setminus \{0\}$ (i.e. we do not restrict $\lambda \in (0,\sqrt\frac{g}{L_0})$ and $k\in L^{-1}\mathbb{Z}\setminus \{0\}$ at this step) on the functional space 
\[
\tilde H^2((-1,1)) :=\{\phi \in H^2((-1,1)), \phi(\pm 1)=0\},
\]
so that we can transform our problem into solving the variational problem 
\begin{equation}
\lambda \bB_{k,\lambda,\mu}(\phi, \theta) = gk^2\int_{-1}^1 \rho_0'\phi \theta dx_2 \quad\text{for all } \theta \in \tilde H^2((-1,1)),
\end{equation}
for all $\phi, \theta$ staying in the functional space $\tilde H^2((-1,1))$ associated with the norm $\sqrt{\bB_{k,\lambda,\mu}(\cdot,\cdot)}$.  The desired bilinear form $\bB_{k,\lambda,\mu}$ is 
\begin{equation}\label{BilinearForm}
\begin{split}
\bB_{k,\lambda,\mu}(\vartheta, \varrho) :=&\lambda \int_{-1}^1  \rho_0 (k^2\vartheta  \varrho + \vartheta'  \varrho') dx_2 +  \mu \int_{-1}^1 (\vartheta''  \varrho'' + 2k^2 \vartheta'  \varrho' +k^4 \vartheta  \varrho)dx_2 \\
&\qquad\quad- \xi_-\vartheta'(-1)\varrho'(-1) -\xi_+ \vartheta'(1)\varrho'(1),
\end{split}
\end{equation}
For all $\lambda \geq 0$ and $k\in \R\setminus \{0\}$, we will place ourselves in a $k$-supercritical regime of the viscosity coefficient  $\mu >\mu_c(k,\Xi)$ with $\Xi=(\xi_-,\xi_+)$ (see  the precise formula $\mu_c(k,\Xi) $ in Proposition \ref{PropMuC})  such that 
\begin{equation}\label{DefMuC_k}
\bB_{k,0,\mu}\text{ is coercive if and only if  } \mu >\mu_c(k,\Xi),
\end{equation}
it yields $\bB_{k,\lambda,\mu}$ is coercive for all $\lambda \geq 0$ and $\mu>\mu_c(k,\Xi)$.  In view of Riesz's representation theorem, we thus obtain an abstract  operator $Y_{k,\lambda,\mu}$ from $\tilde H^2((-1,1))$ to its dual, such that 
\begin{equation}
 \bB_{k,\lambda,\mu}(\phi, \theta) = \langle Y_{k,\lambda,\mu}\phi, \theta\rangle 
\end{equation}
for all $\theta \in \tilde H^2((-1,1))$.
It turns out  that  the  existence of $H^4((-1,1))$ solutions of Eq. \eqref{MainEq}-\eqref{NavierBound} on $(-1,1)$ is reduced to  the existence of  weak solutions $\phi \in \tilde H^2((-1,1)$ of
\[
\lambda Y_{k,\lambda,\mu}\phi = gk^2\rho_0' \phi
\]
on $(-1,1)$.   Owing to a bootstrap argument to solution $\phi$, we obtain (see Proposition \ref{PropInverseOfR_Navier}) $Y_{k,\lambda,\mu}$ explicitly, that is a fourth-order differential operator, 
\[
Y_{k,\lambda,\mu}\phi = \lambda( \rho_0 k^2 \phi - (\rho_0 \phi')')+ \mu (\phi^{(4)} - 2k^2 \phi'' + k^4 \phi) 
\]
and we get back that $\phi \in H^4((-1,1))$ satisfies \eqref{MainEq} on $(-1,1)$ and the boundary conditions \eqref{NavierBound}.

Denoting by $\cM$ the operator of multiplication by $\sqrt{\rho_0'}$ in $L^2((-1,1))$, we now find $(\lambda, v)$ such that 
\[
\frac{\lambda}{gk^2} v = \cM Y_{k,\lambda,\mu}^{-1}\cM v.
\] 
The theory of self-adjoint and compact operators for a Sturm-Liouville problem on the functional space $H^2((-1,1))$ plays a key role here. Once it is proven that the operator $\cM Y_{k,\lambda,\mu}^{-1}\cM$ is compact and self-adjoint from $L^2((-1,1))$ to itself, then its discrete spectrum is a sequence of  eigenvalues (denoted by $\gamma_n(k,\lambda,\mu)$). Let $v_{n,k,\lambda,\mu}$ be an eigenfunction  of $\cM Y_{k,\lambda,\mu}^{-1}\cM$ associated with the eigenvalue $\gamma_n(k,\lambda,\mu)$ and let $\phi_{n,k,\lambda,\mu} = Y_{k,\lambda,\mu}^{-1}\cM v_{n,k,\lambda,\mu}$, we have 
\[
\gamma_n(k,\lambda,\mu) Y_{k,\lambda,\mu} \phi_{n,k,\lambda,\mu} = \cM^2\phi_{n,k,\lambda,\mu}=\rho_0'\phi_{n,k,\lambda,\mu}.
\]
For each $n$, we solve the equation
\begin{equation}\label{EqFindLambda}
\gamma_n(k,\lambda,\mu)= \frac{\lambda}{gk^2}.
\end{equation}
We will show that Eq. \eqref{EqFindLambda} has a unique root $\lambda_n(k,\mu) \in \R_+$ because of the decrease  of $\gamma_n$  in $\lambda$, which is an easy extension of  Kato's perturbation theory of spectrum of operators \cite{Kato}.  In addition, when $\lambda_n$ is a characteristic value, we have $\lambda_n \leq \sqrt\frac{g}{L_0}$ for all $n\geq 1$. This yields that for any horizontal spatial frequency $ k \in L^{-1}\mathbb{Z}\setminus \{0\}$,  there exists a sequence of characteristic values $(\lambda_n(k,\mu))_{n\geq 1}$, that is uniformly bounded and we further obtain that $\lambda_n$ decreases towards 0 as $n\to \infty$. For each $\lambda_n$,  we have that $\phi_{n,k,\lambda_n,\mu} = Y_{k,\lambda_n,\mu}^{-1}v_{n,k,\lambda_n,\mu}$ is a solution in $H^4((-1,1))$ of \eqref{MainEq}-\eqref{NavierBound} associated with $\lambda=\lambda_n$.

We sum up the above arguments in our first theorem. 
\begin{theorem}\label{MainThm}
Let $k\in L^{-1}\mathbb{Z}\setminus \{0\}$ be fixed and let  $\rho_0$ satisfy that   \eqref{RhoAssume}, i.e. 
\[\rho_0 \in C^1([-1,1]), \quad \rho_0(\pm 1)=\rho_{\pm} \in (0,\infty), \quad \rho_0'>0 \text{ everywhere on } [-1,1].\]
For all $\mu>\mu_c(k,\Xi)$, there exists an infinite sequence  $(\lambda_n, \phi_n)_{n\geqslant 1}$ with $\lambda_n >0$ decreasing towards 0 and $\phi_n \in H^4((-1,1))$, $\phi_n$ non trivial,  satisfying \eqref{MainEq}-\eqref{NavierBound}. 
\end{theorem}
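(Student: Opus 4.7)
The plan is to follow the outline given just before the theorem: reduce the fourth-order boundary value problem \eqref{MainEq}--\eqref{NavierBound} to an eigenvalue problem for a compact self-adjoint operator on $L^2((-1,1))$, and then for each $n\geq 1$ solve the scalar equation $\gamma_n(k,\lambda,\mu)=\lambda/(gk^2)$ using monotonicity of $\gamma_n$ in $\lambda$.

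First, with $k\in L^{-1}\mathbb{Z}\setminus\{0\}$ and $\mu>\mu_c(k,\Xi)$ fixed, I would verify that the bilinear form $\bB_{k,\lambda,\mu}$ defined in \eqref{BilinearForm} is continuous and coercive on $\tilde H^2((-1,1))$ for every $\lambda\geq 0$. Continuity follows from the Cauchy--Schwarz inequality and the $H^2$ trace theorem applied to the boundary terms. Coercivity at $\lambda=0$ is exactly the defining property \eqref{DefMuC_k} of $\mu_c(k,\Xi)$, and switching on $\lambda>0$ only adds the manifestly nonnegative quantity $\lambda\int_{-1}^1 \rho_0(k^2\vartheta^2+\vartheta'^2)dx_2$, so coercivity persists uniformly in $\lambda\geq 0$. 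Lax--Milgram (or Riesz representation) then produces a continuous bijective operator $Y_{k,\lambda,\mu}:\tilde H^2((-1,1))\to \tilde H^2((-1,1))^*$.

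Second, integrating by parts in \eqref{BilinearForm} against test functions in $\tilde H^2$ would identify the differential expression for $Y_{k,\lambda,\mu}\phi$ as $\lambda(\rho_0 k^2\phi-(\rho_0\phi')')+\mu(\phi^{(4)}-2k^2\phi''+k^4\phi)$, while the Navier-slip conditions in \eqref{NavierBound} appear as the natural boundary conditions produced by the boundary traces $\xi_\pm$ in $\bB_{k,\lambda,\mu}$. A standard elliptic bootstrap lifts any weak solution $\phi\in\tilde H^2$ of $Y_{k,\lambda,\mu}\phi=gk^2\rho_0'\phi$ to $H^4((-1,1))$. Next, since $\rho_0'$ is bounded above and below by positive constants on $[-1,1]$, the multiplication operator $\cM$ is bounded and invertible on $L^2((-1,1))$, and setting $v=\cM\phi$ the problem becomes to find $v\in L^2((-1,1))$ with $\cM Y_{k,\lambda,\mu}^{-1}\cM v=(\lambda/(gk^2))v$. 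Introducing $T_{k,\lambda,\mu}:=\cM Y_{k,\lambda,\mu}^{-1}\cM$, self-adjointness follows from symmetry of $\bB_{k,\lambda,\mu}$ and compactness from the compact embedding $\tilde H^2((-1,1))\hookrightarrow L^2((-1,1))$; the spectral theorem then yields a decreasing sequence of positive eigenvalues $\gamma_n(k,\lambda,\mu)\to 0$ as $n\to\infty$.

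The main obstacle I expect lies in the last step: for each fixed $n$, producing a positive root $\lambda_n$ of $\gamma_n(k,\lambda,\mu)=\lambda/(gk^2)$ and proving $\lambda_n\to 0$. The strategy is to establish that $\lambda\mapsto \gamma_n(k,\lambda,\mu)$ is continuous and strictly decreasing on $[0,+\infty)$. Monotonicity should come from a Kato-type perturbation argument: because $\bB_{k,\lambda,\mu}$ is strictly increasing in $\lambda$ as a quadratic form, $Y_{k,\lambda,\mu}^{-1}$ is strictly decreasing in the operator order, so conjugating by $\cM$ and invoking the min--max characterization of eigenvalues delivers strict decrease of each $\gamma_n(k,\cdot,\mu)$. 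Since $\lambda\mapsto \lambda/(gk^2)$ strictly increases from $0$ to $+\infty$ while $\gamma_n(k,0,\mu)>0$, the intermediate value theorem produces a unique $\lambda_n>0$. Such a $\lambda_n$ is by construction a characteristic value of \eqref{MainEq}--\eqref{NavierBound}, so Lemma \ref{LemEigenvalueReal} forces $\lambda_n\leq \sqrt{g/L_0}$. Combined with a uniform upper bound on $\gamma_n(k,\cdot,\mu)$ over the compact interval $[0,\sqrt{g/L_0}]$ (which tends to $0$ as $n\to\infty$ by the spectral theorem applied at any fixed $\lambda$) and the fixed-point relation $\lambda_n=gk^2\gamma_n(k,\lambda_n,\mu)$, this yields $\lambda_n\to 0$. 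The nontrivial eigenfunctions $\phi_n=\cM^{-1}v_n\in H^4((-1,1))$ arise from the corresponding eigenvectors $v_n$ of $T_{k,\lambda_n,\mu}$, completing the construction.
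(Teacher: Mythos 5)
Your proposal is correct and follows essentially the same route as the paper: the same reduction to the compact self-adjoint operator $\cM Y_{k,\lambda,\mu}^{-1}\cM$ and the same fixed-point equation $\gamma_n(k,\lambda,\mu)=\lambda/(gk^2)$ solved via monotonicity of $\gamma_n$ in $\lambda$, with only minor technical variants (operator-order/min--max monotonicity in place of the paper's differentiation of the eigenvalue identity via Kato's theory, and the direct bound $\lambda_n\le gk^2\gamma_n(k,0,\mu)\to 0$ in place of the paper's contradiction argument). The only point you leave implicit is that $(\lambda_n)_{n\ge1}$ is decreasing in $n$, which the theorem asserts and the paper proves; it follows in one line from $\gamma_{n+1}\le\gamma_n$ combined with the monotonicity of $\gamma_n$ in $\lambda$ that you already established.
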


Once Eq. \eqref{MainEq}-\eqref{NavierBound} is solved, we go back to the linearized equations \eqref{EqLinearized}.  For a fixed $k\in L^{-1}\mathbb{Z}\setminus \{0\}$,  we obtain a sequence of solutions to the linearized equations \eqref{EqLinearized} as indicated in Proposition \ref{PropSolEqLinear}, which are $(e^{\lambda_j(k,\mu) t} \vec U_j(k,\vec x))_{j\geq 1}$, with $\vec U_j(k,\vec x)=(\sigma_j, \vec u_j, p_j)^T(\vec x)$.

Let us choose a $k_0\in L^{-1}\mathbb{Z}\setminus \{0\}$.  In view of getting  infinitely  many characteristic values of the linearized problem, we introduce  a linear combination of  unstable normal modes 
\begin{equation}\label{GeneralInitialCond}
\vec U^M(t, \vec x)= \sum_{j=1}^M \Csf_j e^{\lambda_j(k_0,\mu) t}\vec U_j(k_0,\vec x) 
\end{equation}
to construct  an approximate solution to the nonlinear problem \eqref{EqNS}-\eqref{EqNavierBoundary}, with   constants $\Csf_j$ being chosen such that 
\begin{equation}\label{NormalizedCond_Navier_1}
\text {at least one of }\Csf_j\text{ } (1\leq j\leq N) \text{ is non-zero}
\end{equation}
and let $j_m:= \min \{j: 1\leq j\leq N, \Csf_j \neq 0\}$,
\begin{equation}\label{NormalizedCond_Navier_2} 
\begin{split}
|\Csf_{j_m}| \| u_{j_m}\|_{L^2(\Omega)}> \frac12 \sum_{j\geq j_m+1}|\Csf_j|\| u_j\|_{L^2(\Omega)}.
\end{split}
\end{equation}
Assume Eq. \eqref{EqPertur}--\eqref{BoundLinearized} is supplemented with initial datum  $\delta U^M(0, x)$ ($0<\delta \ll 1$), there is a unique local strong solution $(\sigma^{\delta},  u^{\delta})$ with an associated pressure $q^{\delta}$ to the nonlinear  equations Eq. \eqref{EqPertur}--\eqref{BoundLinearized} on $[0, T_{\max})$ (see Proposition \ref{PropLocalSolution}). We define the differences 
\[
(\sigma^d, \vec u^d, q^d)= (\sigma^{\delta}, \vec u^{\delta}, q^{\delta})- \delta (\sigma^M,\vec u^M,q^M)
\]
and  estimate the bound of $\|(\sigma^d,\vec u^d)\|_{L^2(\Omega)}$  in time (see Proposition \ref{PropL2_NormU^d})  in the regime 
\begin{equation}\label{CriticalViscosity}
\mu >3\mu_c(\Xi), \quad \text{with } \mu_c(\Xi) := \sup_{k\in L^{-1}\mathbb{Z}\setminus\{0\}}\mu_c(k,\Xi).
\end{equation}
Indeed, since $\mu>3\mu_c(\Xi)$, we  can choose a constant $\varpi_0>0$ such that 
\begin{equation}\label{Nu_0}
\mu>(3+\varpi_0)\mu_c(\Xi).
\end{equation}
 Hence, $\nu_0 = \frac{3+\varpi_0}{2+\varpi_0} \in (1,\frac32)$. It follows from Theorem \ref{MainThm} and Lemma \ref{LemEigenvalueReal}(2) that  exists \begin{equation}\label{DefLambda}
0<\Lambda=\sup_{k \in L^{-1}\mathbb{Z}\setminus \{0\}}\lambda_1(k,\mu) \leq \sqrt\frac{g}{L_0}.
\end{equation}
We further look for   $k_0 \in L^{-1}\mathbb{Z}\setminus \{0\}$ to have that 
\begin{equation}\label{AssumeLambdaN}
\Lambda \geq \lambda_1(k_0,\mu) > \lambda_2(k_0,\mu)> \dots > \lambda_N(k_0,\mu) >\frac{2\nu_0}3 \Lambda >\lambda_{N+1}(k_0,\mu) > \dots.
\end{equation}
For $t$ small enough, we deduce the bound in time of $\|(\sigma^d,\vec u^d)(t)\|_{L^2(\Omega)}$ in Proposition \ref{PropL2_NormU^d}, that is 
\[\|(\sigma^d,\vec u^d)(t)\|_{L^2(\Omega)}^2 \leq C \delta^3 \Big(\sum_{j=j_m}^N |\Csf_j| e^{\lambda_j t}+ \max(0,M-N) \Big(\max_{N+1\leq j\leq M}|\Csf_j|\Big) e^{\frac23 \nu_0\Lambda t}\Big)^3.\]
The nonlinear result  follows. 
 
\begin{theorem}\label{ThmNonlinear}
Let $\mu_c(\Xi)$ be defined as in \eqref{CriticalViscosity} and $\mu>3\mu_c(\Xi)$. Let  $\rho_0$ satisfies \eqref{RhoAssume}, i.e. 
\[
\rho_0 \in C^1([-1,1]), \quad \rho_0(\pm 1)=\rho_{\pm} \in (0,\infty), \quad \rho_0'>0 \text{ everywhere on } [-1,1].
\]
Let $M\in \mathbb{N}^{\star}$, there exist a positive constant $m_0$ and two positive constants $\delta_0$ and  $ \epsilon_0 $ sufficiently small such that for any $\delta \in (0,\delta_0)$,  the nonlinear  equations \eqref{EqPertur} with boundary conditions \eqref{BoundLinearized} and the initial data 
\[
\delta \sum_{j=1}^M \Csf_j \vec U_j(\vec x)
\]
 satisfying \eqref{NormalizedCond_Navier_1}-\eqref{NormalizedCond_Navier_2} has a unique local strong solution  $(\sigma^{\delta}, \vec u^{\delta})$ with an associated pressure $q^{\delta}$  such that 
\begin{equation}\label{BoundU_2Tdelta}
\| \vec u^{\delta}(T^{\delta})\|_{L^2(\Omega)} \geq m_0\epsilon_0,
\end{equation}
where $T^{\delta}\in (0, T_{\max})$  is given by $\delta \sum_{j=j_m}^M|\Csf_j| e^{\lambda_j T^\delta }=\epsilon_0$.
\end{theorem}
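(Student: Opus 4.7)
The plan is to follow the nonlinear instability scheme of Grenier, adapted to the Navier-slip setting. First I would invoke Proposition~\ref{PropLocalSolution} to obtain a unique local strong solution $(\sigma^\delta, \vec u^\delta, q^\delta)$ of \eqref{EqPertur}--\eqref{BoundLinearized} on a maximal interval $[0, T_{\max})$ issuing from the initial datum $\delta \sum_{j=1}^M \Csf_j \vec U_j$. In parallel, Theorem~\ref{MainThm} combined with Proposition~\ref{PropSolEqLinear} at the fixed frequency $k_0$ produces the exact solution of the linearized system
\[\vec U^M(t, \vec x) = \delta \sum_{j=1}^M \Csf_j e^{\lambda_j(k_0,\mu) t} \vec U_j(\vec x),\]
and I would form the difference $(\sigma^d, \vec u^d, q^d) = (\sigma^\delta, \vec u^\delta, q^\delta) - (\sigma^M, \vec u^M, q^M)$. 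This difference satisfies the linearized equations \eqref{EqLinearized} with right-hand sides given by the genuinely quadratic nonlinearities of Navier-Stokes evaluated at $(\sigma^\delta, \vec u^\delta)$.

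Second, I would carry out the $L^2$ energy estimate for $(\sigma^d, \vec u^d)$ recorded in Proposition~\ref{PropL2_NormU^d}. The natural strategy is to test the momentum equation for $\vec u^d$ against $\vec u^d$, integrate over $\Omega$, exploit the Navier-slip conditions together with the coercivity of the bilinear form $\bB_{k_0, 0, \mu}$ (valid for $\mu > \mu_c(k_0, \Xi)$ by Proposition~\ref{PropMuC}) to dominate the viscous and gravitational terms, and control the quadratic source by Sobolev embeddings together with the explicit exponential growth of $\vec U^M$. The introduction of $\nu_0 \in (1, 3/2)$ in \eqref{Nu_0} is precisely what guarantees the cubic-in-$\delta$ growth rate $3\lambda_1 > 2\nu_0 \Lambda$ underlying the threshold $\mu > 3\mu_c(\Xi)$; the resulting bound takes the form
\[\|\vec u^d(t)\|_{L^2(\Omega)}^2 \leq C \delta^3 \Big(\sum_{j=1}^N |\Csf_j| e^{\lambda_j t} + \max(0, M-N)\Big(\max_{N+1 \leq j \leq M}|\Csf_j|\Big) e^{\tfrac{2}{3}\nu_0 \Lambda t}\Big)^3,\]
valid as long as $\delta \sum_{j=1}^M |\Csf_j| e^{\lambda_j t} \leq \epsilon_0$ for a small $\epsilon_0$ to be fixed.

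Third, I would introduce the escape time $T^\delta \in (0, T_{\max})$ by the implicit relation $\delta \sum_{j=1}^M |\Csf_j| e^{\lambda_j T^\delta} = \epsilon_0$; existence and uniqueness are immediate from monotonicity, and taking $\epsilon_0$ small forces $T^\delta < T_{\max}$ by the standard continuation criterion. At this time, the normalization \eqref{NormalizedCond} together with the ordering \eqref{AssumeLambdaN} shows, via the dominance of the leading mode $e^{\lambda_1 T^\delta} \vec u_1$, a lower bound of the form
\[\|\vec u^M(T^\delta)\|_{L^2(\Omega)} \geq c_\ast \delta e^{\lambda_1 T^\delta} \|\vec u_1\|_{L^2(\Omega)} \geq c'_\ast \epsilon_0\]
for constants $c_\ast, c'_\ast > 0$ depending only on $M$, $(\Csf_j)$ and $(\vec u_j)$. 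The reverse triangle inequality combined with the previous bound then yields
\[\|\vec u^\delta(T^\delta)\|_{L^2(\Omega)} \geq \|\vec u^M(T^\delta)\|_{L^2(\Omega)} - \|\vec u^d(T^\delta)\|_{L^2(\Omega)} \geq c'_\ast \epsilon_0 - C \epsilon_0^{3/2},\]
so fixing $\epsilon_0$ small enough produces \eqref{BoundU_2Tdelta} with $m_0 = c'_\ast / 2$.

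The main obstacle is the energy estimate in Proposition~\ref{PropL2_NormU^d}: one must absorb the transport-type nonlinearities $\vec u \cdot \nabla \sigma$ and $\sigma \vec u \cdot \nabla \vec u$ into the dissipation furnished by $\bB_{k_0, 0, \mu}$ while conceding only a factor $\delta^3 e^{3\lambda_1 t}$. This balance is tight and explains the sharp threshold \eqref{CriticalViscosity}; in the Navier-slip framework the boundary integrals $\xi_\pm \vartheta'(\pm 1) \varrho'(\pm 1)$ do not vanish, so one must handle them carefully when splitting the index set at $N$ (the mode index at which the growth rate crosses $\tfrac{2}{3}\nu_0 \Lambda$). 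Once this estimate is in hand, the Grenier bootstrap closes routinely.
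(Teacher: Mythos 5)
Your overall architecture coincides with the paper's: local existence from Proposition \ref{PropLocalSolution}, comparison with the linear superposition $\vec U^M$, the error bound of Proposition \ref{PropL2_NormU^d}, the escape time $T^\delta$, the lower bound $\|\vec u^M(T^\delta)\|_{L^2(\Omega)}\gtrsim\epsilon_0$ obtained from \eqref{NormalizedCond} by isolating the first mode and estimating the cross terms, and the reverse triangle inequality after choosing $\epsilon_0$ small. Those closing steps are correct; the paper makes the "routine bootstrap" you allude to explicit by proving $T^\delta\le\min(T^\star,T^{\star\star})$ by contradiction with the choice \eqref{EqEpsilon}, so that the error bound is actually available at $t=T^\delta$ (your phrase "valid as long as $\delta\sum_j|\Csf_j|e^{\lambda_j t}\le\epsilon_0$" silently presupposes exactly this step).

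The genuine gap is in how you propose to obtain the central estimate \eqref{L2_NormU^d}. Testing the momentum equation for $\vec u^d$ against $\vec u^d$ and hoping to "dominate the gravitational term" by coercivity of $\bB_{k_0,0,\mu}$ cannot work: gravity is the destabilizing term, and a crude bound on $-g\int\sigma^d u_2^d$ produces a Gronwall rate governed by generic constants rather than by $\Lambda$, so one cannot conclude that the error grows no faster than $e^{2\nu_0\Lambda t}$ with $2\nu_0\Lambda<3\lambda_j$ for $j\le N$, and the $\delta^3F_M^3$ structure needed to beat $\|\vec u^M\|\sim\delta F_M$ is lost. The paper's proof of Proposition \ref{PropL2_NormU^d} instead differentiates the equation in time, tests against $\vec u_t^d$, and invokes the sharp variational inequality of Lemma \ref{LemIneMaximalMode} (a consequence of the variational characterization of $\lambda_1$ in Lemma \ref{PropLambda_1}) to control $\int g\rho_0'|u_2^d|^2$ together with the slip boundary terms by $\Lambda^2\int\rho_0|\vec u^d|^2+\Lambda\mu\|\nabla\vec u^d\|^2$; the boundary terms involving $\vec u_{t}^d$ are then absorbed through Lemma \ref{LemEstMu_c}, which requires the uniform constant $\mu_c(\Xi)=\sup_k\mu_c(k,\Xi)$ precisely because $\vec u^d$ is not supported at the single frequency $k_0$ — this is why the hypothesis is $\mu>3\mu_c(\Xi)$ and not $\mu>\mu_c(k_0,\Xi)$ as your sketch suggests. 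Finally, the constants $m_1=\nu_0+\sqrt{\nu_0^2-1}$ and $m_2$ in \eqref{M1_constant}--\eqref{M2_constant} are tuned so that Young's inequality yields the rate $2\nu_0\Lambda$ in \eqref{IneBeforeGronwall}, with $\nu_0<\tfrac32$ available only because $\mu>(3+\varpi_0)\mu_c(\Xi)$, and the dichotomy $\lambda_j>\tfrac{2\nu_0}{3}\Lambda$ for $j\le N$ versus $\lambda_j<\tfrac{2\nu_0}{3}\Lambda$ for $j\ge N+1$ is what produces the split form of \eqref{L2_NormU^d}. If you take Proposition \ref{PropL2_NormU^d} as a citable black box, your argument for the theorem matches the paper; as a self-contained proof, the mechanism you describe for that estimate would fail.
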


\section{Preliminaries}\label{Preliminaries}

The first aim  is to prove Lemma \ref{LemEigenvalueReal_Navier}, showing that all characteristic values $\lambda$ are real for any increasing density profile $\rho_0$. In the second part, we find the exact formula of the $k$-critical viscosity coefficient $\mu_c(k,\Xi)$ (see \eqref{DefMuC_k} above) for all $k>0$. The last goal is to study the bilinear form $\bB_{k,\lambda,\mu}$ in Section \ref{SectBilinearForm} to prepare for our linear study.

\subsection{The positivity of characteristic values $\lambda$}\label{AppRealEigenvalue_Navier}
\begin{lemma}\label{LemEigenvalueReal_Navier}
For any $ k \in L^{-1}\mathbb{Z}\setminus \{0\}$, 
\begin{itemize}
\item all  characteristic values $\lambda$ are always real,
\item all  characteristic values $\lambda$ satisfy that $\lambda \leqslant \sqrt{\frac{g}{L_0}}$.
\end{itemize}
\end{lemma}
\begin{proof}
Let $\overline\phi\in H^4((-1,1))$ satisfy \eqref{MainEq}-\eqref{NavierBound}.  Multiplying by $ \phi$ on both sides of \eqref{MainEq} and  using the integration by parts, we get that
\[
\begin{split}
- \int_{-1}^1  (\rho_0 \phi')'\overline\phi dx_2 &= -\rho_0\phi' \overline\phi \Big|_{-1}^1 + \int_{-1}^1 \rho_0 |\phi'|^2 dx_2 
\end{split}
\]
that 
\[
-\int_{-1}^1 \phi'' \overline\phi dx_2 = -\phi' \overline\phi\Big|_{-1}^1 + \int_{-1}^1|\phi'|^2 dx_2
\]
and that
\[
\int_{-1}^1 \phi^{(4)} \overline\phi dx_2 = \phi''' \overline\phi \Big|_{-1}^1 - \phi'' \overline\phi' \Big|_{-1}^1  + \int_{-1}^1 |\phi''|^2 dx_2,
\]
we  obtain that 
\begin{equation}\label{EqVariational_Navier}
\begin{split}
&\lambda \Big( \mu \int_{-1}^1 ( |\phi''|^2 + 2k^2 |\phi'|^2 + k^4 |\phi|^2) dx_2 - \xi_-|\phi'(-1)|^2 -\xi_+|\phi'(1)|^2\Big)\\
&\qquad+\lambda^2 \int_{-1}^1 ( k^2 \rho_0 |\phi|^2 + \rho_0 |\phi'|^2 ) dx_2 = gk^2 \int_{-1}^1 \rho_0'|\phi|^2 dx_2. 
\end{split}
\end{equation}
Suppose that $\lambda = \lambda_1 + i\lambda_2$, then one deduces from \eqref{EqVariational_Navier} that 
\begin{equation}\label{EqRealPart_Navier}
\begin{split}
& \lambda_1 \Big( \mu \int_{-1}^1 ( |\phi''|^2 + 2k^2 |\phi'|^2 + k^4 |\phi|^2 ) dx_2- \xi_-|\phi'(-1)|^2 -\xi_+|\phi'(1)|^2\Big)\\
&\qquad+(\lambda_1^2-\lambda_2^2) \int_{-1}^1 ( k^2 \rho_0 |\phi|^2 + \rho_0 |\phi'|^2 ) dx_2  = gk^2 \int_{-1}^1 \rho_0'|\phi|^2 dx_2
\end{split}
\end{equation}
and that 
\begin{equation}\label{EqImaginaryPart_Navier}
\begin{split}
& \lambda_2 \Big(\mu \int_{-1}^1 ( |\phi''|^2 + 2k^2 |\phi'|^2 + k^4 |\phi|^2) dx_2- \xi_-|\phi'(-1)|^2 -\xi_+|\phi'(1)|^2\Big) \\
&=-2\lambda_1 \lambda_2 \int_{-1}^1 ( k^2 \rho_0 |\phi|^2 + \rho_0 |\phi'|^2 ) dx_2.\end{split}
\end{equation}
If $\lambda_2 \neq 0$, Eq. \eqref{EqImaginaryPart_Navier} leads us to
\[
\begin{split}
-2\lambda_1 \int_{-1}^1( k^2 \rho_0 |\phi|^2 + \rho_0 |\phi'|^2 ) dx_2 &=  \mu \int_{-1}^1 ( |\phi''|^2 + 2k^2 |\phi'|^2 + k^4 |\phi|^2 )dx_2\\
&\qquad - \xi_- |\phi'(-1)|^2 -\xi_+ |\phi'(1)|^2,
\end{split}
\]
which yields 
\[
\begin{split}
-(\lambda_1^2-\lambda_2^2) \int_{-1}^1 ( k^2 \rho_0 |\phi|^2 + \rho_0 |\phi'|^2 ) dx_2  &= -2\lambda_1^2 \int_{-1}^1 ( k^2 \rho_0 |\phi|^2 + \rho_0 |\phi'|^2 ) dx_2 \\
&\qquad - gk^2 \int_{-1}^1 \rho_0'|\phi|^2 dx_2.
\end{split}
\]
Equivalently, 
\begin{equation}\label{EqModuloLambda_Navier}
(\lambda_1^2+\lambda_2^2)\int_{-1}^1 ( k^2 \rho_0 |\phi|^2 + \rho_0 |\phi'|^2 ) dx_2= - gk^2 \int_{-1}^1 \rho_0' |\phi|^2 dx_2.
\end{equation}
That implies 
\[
(\lambda_1^2+\lambda_2^2) k^2 \rho_- \int_{-1}^1 |\phi|^2 dx_2 \leqslant - gk^2 \int_{-1}^1 \rho_0'|\phi|^2 dx_2.
\]
The positivity of $\rho_0'$ yields a contradiction, then  $\lambda$ is real. Due to \eqref{EqRealPart_Navier} again, we further get that 
\[
\lambda^2 \int_{-1}^1\rho_0(k^2|\phi|^2+|\phi'|^2) dx_2 \leqslant gk^2 \int_{-1}^1\rho_0'|\phi|^2 dx_2.
\]
It tells us that $\lambda$ is  bounded by  $\sqrt{\frac{g}{L_0}}$. This finishes the proof of Lemma \ref{LemEigenvalueReal_Navier}.
\end{proof}

Note again that, thanks to Lemma \ref{LemEigenvalueReal}, in what follows in this section, we only use real-valued functions for the linear analysis. 

\subsection{The threshold of viscosity coefficient}\label{SectThreshold}

We obtain the precise formula of the critical viscosity coefficient $\mu_c(k,\Xi)$  for all $k\in \R\setminus\{0\}$. Note that  $\mu_c(k,\Xi)=\mu_c(-k,\Xi)$ for all $k\in \R\setminus \{0\}$, it suffices to find $\mu_c(k,\Xi)$ for $k\in \R_+$.
\begin{proposition}\label{PropMuC}
The following results hold.
\begin{enumerate}
\item For all $k\in \R_+$, we have 
\begin{equation}\label{MuC_kMax}
\mu_c(k,\Xi) = \max\limits_{\phi\in \tilde H^2((-1,1))} \frac{\xi_-(\phi'(-1))^2+\xi_+(\phi'(1))^2}{\int_{-1}^1 ((\phi'')^2+2k^2(\phi')^2+k^4\phi^2) dx_2}.
\end{equation}
Moreover,
\begin{equation}\label{EqMuK}
\mu_c(k,\Xi)= \frac1{4k \sinh^2(2k) } 
\left( \begin{split} 
&( \sinh(2k)\cosh(2k) -2k)(\xi_+ +\xi_-) \\
&+ \left(\begin{split} 
&( \sinh(2k) -2k\cosh(2k))^2 (\xi_++ \xi_-)^2  \\
& + \sinh^2(2k)(\sinh^2(2k)-4k^2) (\xi_+ -\xi_-)^2
\end{split}\right)^{\frac12}
\end{split}\right).
\end{equation}

\item $\mu_c(k,\Xi)$ is a decreasing function in $k\in \R_+$ and 
\begin{equation}\label{Mu_Climit}
\lim_{k\to 0} \mu_c(k,\Xi) = \sup_{k\in \R\setminus\{0\}}\mu_c(k,\Xi) = : \mu_c^s(\Xi).
\end{equation}
 We have the asymptotic expansion of $\mu_c(k,\Xi)$ as $k\to 0^+$, 
\begin{equation}\label{LimitMuK}
\begin{split}
\mu_c(k,\Xi) &=\frac13 \Big(\xi_++ \xi_-+ \sqrt{\xi_+^2-\xi_+\xi_- +\xi_-^2}\Big)\\
 &\qquad -\frac2{15} \Big( 4(\xi_++\xi_-) + \frac{4\xi_+^2-\xi_+\xi_- +4\xi_-^2}{\sqrt{\xi_+^2-\xi_+\xi_-+\xi_-^2}} \Big) k^2 + O(k^3).
 \end{split}
\end{equation}
That implies 
\begin{equation}\label{EqMuC-MuC_k}
\mu_c^s(\Xi)  = \frac13 \Big(\xi_++ \xi_-+ \sqrt{\xi_+^2-\xi_+\xi_- +\xi_-^2}\Big).
\end{equation}
As $k \gg 1$,  we obtain the limit
\begin{equation}\label{UpperBoundMuK}
\mu_c(k,\Xi)  \leq \frac{\sqrt{2(\xi_+^2+\xi_-^2)}}k \to 0.
\end{equation}

\item We have
 \begin{equation}\label{EqMuC}
\begin{split}
\mu_c^s(\Xi) &=\max\limits_{\phi\in \tilde H^2((-1,1))} \frac{\xi_-(\phi'(-1))^2+\xi_+(\phi'(1))^2}{\int_{-1}^1 (\phi'')^2 dx_2}\\
&=  \frac13 \Big(\xi_++ \xi_-+ \sqrt{\xi_+^2-\xi_+\xi_- +\xi_-^2}\Big).
\end{split}
\end{equation}

\end{enumerate}
\end{proposition}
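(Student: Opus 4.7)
The plan is to establish the three claims in sequence, each anchored on the variational characterization \eqref{MuC_kMax}.

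\textbf{Stage 1: Variational identification of $\mu_c(k,\Xi)$.} By the definition \eqref{DefMuC_k}, $\mu_c(k,\Xi)$ is the coercivity threshold of
$$\bB_{k,0,\mu}(\phi,\phi)=\mu\,Q_k(\phi)-R(\phi),$$
with $Q_k(\phi):=\int_{-1}^1(|\phi''|^2+2k^2|\phi'|^2+k^4|\phi|^2)dx_2$ and $R(\phi):=\xi_-|\phi'(-1)|^2+\xi_+|\phi'(1)|^2$. Since $\phi(\pm1)=0$, Poincaré-type inequalities show that $Q_k$ is equivalent to the $H^2$-norm on $\tilde H^2((-1,1))$ for every $k\geq 0$, and the boundary traces $\phi\mapsto\phi'(\pm1)$ are continuous from $H^2$ to $\mathbb{R}$. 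Therefore the Rayleigh quotient $R/Q_k$ is $0$-homogeneous and bounded, and attains its supremum on the unit sphere $\{Q_k(\phi)=1\}$ by a standard minimizing-sequence argument (weak convergence in $H^2$, compactness of traces). This identifies $\mu_c(k,\Xi)$ as that maximum, proving \eqref{MuC_kMax}.

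\textbf{Stage 2: Explicit formula \eqref{EqMuK}.} I write the Euler--Lagrange equation for a maximizer $\phi$ of \eqref{MuC_kMax}: testing against arbitrary $\psi\in\tilde H^2((-1,1))$ and integrating by parts (using $\psi(\pm1)=0$) yields $\phi^{(4)}-2k^2\phi''+k^4\phi=0$ on $(-1,1)$ together with $\phi(\pm1)=0$, $\mu_c\phi''(1)=\xi_+\phi'(1)$, $\mu_c\phi''(-1)=-\xi_-\phi'(-1)$. The characteristic polynomial $(r^2-k^2)^2=0$ has double roots $\pm k$, so the general solution reads
$$\phi(x)=A\cosh(kx)+B\sinh(kx)+Cx\cosh(kx)+Dx\sinh(kx).$$
The two Dirichlet conditions reduce (by taking sums and differences) to $A\cosh k+D\sinh k=0$ and $B\sinh k+C\cosh k=0$, leaving two free parameters. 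The two Navier-type conditions then give a $2\times 2$ homogeneous system whose determinant must vanish for a non-trivial $\phi$ to exist; a direct but lengthy expansion shows this determinant is a quadratic in $\mu_c$ with coefficients polynomial in $\sinh(2k),\cosh(2k),k,\xi_\pm$. Taking the positive root (which corresponds to the maximum rather than a saddle in \eqref{MuC_kMax}) and collecting terms delivers \eqref{EqMuK}.

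\textbf{Stage 3: Monotonicity, asymptotics, and part (3).} Monotonicity of $k\mapsto\mu_c(k,\Xi)$ on $\mathbb{R}_+$ is immediate from \eqref{MuC_kMax}: for each fixed $\phi\in\tilde H^2((-1,1))\setminus\{0\}$, the function $k\mapsto Q_k(\phi)$ is strictly increasing while $R(\phi)$ is $k$-independent, so the supremum is decreasing. Inserting the Taylor expansions $\sinh(2k)=2k+\tfrac{(2k)^3}{6}+O(k^5)$ and $\cosh(2k)=1+\tfrac{(2k)^2}{2}+O(k^4)$ into \eqref{EqMuK} produces \eqref{LimitMuK}; combining this with monotonicity gives \eqref{Mu_Climit} and \eqref{EqMuC-MuC_k}. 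The large-$k$ estimate \eqref{UpperBoundMuK} follows from the asymptotics $\sinh(2k)\sim\cosh(2k)\sim e^{2k}/2$ applied directly to \eqref{EqMuK}, which shows $\mu_c(k,\Xi)=O(1/k)$ and, after comparing with $|\xi_+\pm\xi_-|$, yields the displayed bound $\sqrt{2(\xi_+^2+\xi_-^2)}/k$. For part (3), I pass to the limit $k\to 0^+$ in \eqref{MuC_kMax} using monotone convergence of $Q_k$ to $\int_{-1}^1|\phi''|^2dx_2$: the monotonicity in $k$ ensures the sup on the left converges to $\mu_c^s(\Xi)$, and the sup on the right is precisely the Rayleigh quotient displayed in \eqref{EqMuC}.

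\textbf{Main obstacle.} The principal technical hurdle is Stage 2: carrying out the $2\times 2$ determinant computation with considerable cancellation between hyperbolic terms, and identifying the correct positive root in the symmetric form \eqref{EqMuK}. Stages 1 and 3 are essentially structural, with the asymptotic expansions reducing to Taylor calculus once \eqref{EqMuK} is in hand.
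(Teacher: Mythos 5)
Your proposal follows essentially the same route as the paper: the variational identification via Lagrange multipliers, the fourth-order Euler--Lagrange ODE with hyperbolic solutions, Dirichlet conditions reducing the constants by symmetry, the $2\times 2$ determinant condition yielding a quadratic in $\mu_c$, Taylor expansions of \eqref{EqMuK} for the small-$k$ asymptotics, explicit hyperbolic bounds for large $k$, and a limiting argument to pass from $k\to 0^+$ to the $k=0$ Rayleigh quotient in part (3). The only thing the paper does more carefully is to treat the degenerate cases $\xi_+\xi_-=0$ separately (where the "quadratic" in $\mu_c$ becomes linear), and to verify the large-$k$ bound via explicit uniform inequalities such as $(\sinh(2k)\cosh(2k)-2k)/\sinh^2(2k)\le 2$ rather than only leading-order asymptotics; both are bookkeeping details that fit naturally within your outline.
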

 The proof is postponed to Appendix \ref{FormulaMuK}.

\begin{remark}
We see that  $\mu_c^s(\Xi)$ in \cite[Proposition 2.2]{DLX18}
should be revised and we redo the computation the critical viscosity coefficient. The authors in \cite{DLX18} consider $2\pi L\bT\times (0,1)$ instead of $2\pi L\bT\times (-1,1)$ and constant values $k_{0,1}$ instead of $\xi_{\pm}$. The formula of the critical viscosity defined as in \cite[(1.29)]{DLX18} is 
\[
\mu_c := \sup_{\phi \in \mathcal{Y}} Z(\phi),
\]
where 
\[
\begin{split}
\mathcal{Y} &=\{\phi \in H_0^1((0,1))\cap H^2((0,1)), \frac12 \int_0^1|\phi''|^2dx=1\},\\
Z(\phi) &= \frac{k_0}2|\phi'(0)|^2+\frac{k_1}2|\phi'(1)|^2.
\end{split}
\]
The authors in \cite[Proposition 2.2]{DLX18} claim that $\mu_c= \frac{k}6$ if $k_0=k_1=k>0$. However, in that case, let us take $\phi = \frac1{\sqrt2}(x^2-x) \in \mathcal{Y}$, then  we have a contradiction that
\[
Z\Big( \frac1{\sqrt2}(x^2-x)\Big) = \frac{k}2 > \frac{k}6.
\]
\end{remark}

\subsection{A bilinear form and a self-adjoint invertible operator}\label{SectBilinearForm}

In what follows in this section we have $\lambda\geq 0$ and  $k\in \R_+$ being fixed. Let us recall the definition of $\bB_{k,\lambda,\mu}$ from \eqref{BilinearForm},
\[ \begin{split}
\bB_{k,\lambda,\mu}(\vartheta, \varrho) & := \lambda \int_{-1}^1  \rho_0 (k^2\vartheta  \varrho + \vartheta'  \varrho') dx_2 +  \mu \int_{-1}^1 (\vartheta''  \varrho'' + 2k^2 \vartheta'  \varrho' +k^4 \vartheta  \varrho)dx_2 \\
&\qquad\quad- \xi_-\vartheta'(-1)\varrho'(-1) -\xi_+ \vartheta'(1)\varrho'(1).
\end{split} \]

\begin{lemma}\label{LemPropertyR_Navier}
We have the followings.
\begin{itemize}
\item For all $ \mu >0$, $\bB_{k,\lambda,\mu}$ is a continuous bilinear form on $\tilde H^2((-1,1))$. 
\item  For all  $\mu>\mu_c(k,\Xi)$, the bilinear form $\bB_{k,\lambda,\mu}$ is coercive. 
\end{itemize}
\end{lemma}
\begin{proof}[Proof of Lemma \ref{LemPropertyR_Navier}]
Clearly, $\bB_{k,\lambda,\mu}$ is a bilinear form on  $\tilde H^2((-1,1)) $. 
We then establish the boundedness of $\bB_{k,\lambda,\mu}$.  The integral terms of $\bB_{k,\lambda,\mu}$ are bounded by 
\begin{equation}\label{Bound34termB}
C^\star(\lambda+1) \|\vartheta\|_{\tilde H^2((-1,1))}\|\varrho\|_{\tilde H^2((-1,1))},
\end{equation}
where $C^\star$ is generic constant depending on physical parameters.  Meanwhile, it follows from the general Sobolev inequality that  
\[
(\vartheta'(-1))^2+ (\vartheta'(1))^2 \leq C^\star  \|\vartheta'\|_{H^1((-1,1))}^2.
\]
 Consequently, we get 
\begin{equation}\label{BoundBcontinuous_Navier}
|\bB_{k,\lambda,\mu}(\vartheta, \varrho)| \leq C^\star (1+\lambda )  \|\vartheta\|_{\tilde H^2((-1,1))} \|\varrho\|_{\tilde H^2((-1,1))},
\end{equation}
i.e. $\bB_{k,\lambda,\mu}$ is bounded.

We  show the coercivity of $\bB_{k,\lambda,\mu}$. We have that 
\[
\begin{split}
\bB_{k,\lambda,\mu}(\vartheta,\vartheta) &= \lambda\int_{-1}^1\rho_0(k^2 \vartheta^2+(\vartheta')^2)dx_2 + \mu \int_{-1}^1( (\vartheta'')^2+ 2k^2 (\vartheta')^2 +k^4 \vartheta^2)dx_2 \\
&\qquad -\xi_-(\vartheta'(-1))^2 -\xi_+ (\vartheta'(1))^2.
\end{split}
\]
As $\lambda\geq 0$ and $\mu >\mu_c(k,\Xi)$,  we have
\begin{equation}\label{LowerBoundB_2_Navier}
\begin{split}
\bB_{k,\lambda,\mu}(\vartheta,\vartheta) &\geq \lambda \int_{-1}^1\rho_0(k^2\vartheta^2+(\vartheta')^2)dx_2 \\
&\qquad\quad+ (\mu-\mu_c(k,\Xi)) \int_{-1}^1((\vartheta'')^2+ 2k^2 (\vartheta')^2 +k^4\vartheta^2)dx_2\\
&\geq (\mu-\mu_c(k,\Xi)) \int_{-1}^1( (\vartheta'')^2+ 2k^2 (\vartheta')^2 +k^4\vartheta^2)dx_2.
\end{split}
\end{equation}
It then  follows from \eqref{BoundBcontinuous_Navier} and  \eqref{LowerBoundB_2_Navier} that $\bB_{k,\lambda,\mu}$ is a  continuous and coercive bilinear form on $\tilde H^2((-1,1))$.
\end{proof}

With the above property of $\bB_{k,\lambda,\mu}$, we then establish:
\begin{proposition}\label{PropPropertyY_Navier}
Let  $\mu>\mu_c(k,\Xi)$ and    $(\tilde H^2((-1,1)))'$ be the dual space of $\tilde H^2((-1,1))$, associated with the norm $\sqrt{\bB_{k,\lambda,\mu}(\cdot,\cdot)}$. There is a unique operator 
\[
Y_{k,\lambda,\mu}\in \mathcal{L}(H^2((-1,1)), (\tilde H^2((-1,1)))'),
\]
which is also bijective, such that
\begin{equation}\label{EqMathcalB}
\bB_{k,\lambda,\mu}(\vartheta, \varrho) = \langle Y_{k,\lambda,\mu}\vartheta,  \varrho\rangle
\end{equation}
for all $\vartheta, \varrho \in \tilde H^2((-1,1))$.
\end{proposition}
\begin{proof}
It follows from Riesz's representation theorem that there exists an operator $Y_{k,\lambda,\mu} \in \mathcal{L}(\tilde H^2((-1,1)), (\tilde H^2((-1,1)))')$ such that 
\[
\bB_{k,\lambda,\mu}(\vartheta,\varrho) =\langle Y_{k,\lambda,\mu}\vartheta, \varrho \rangle
\]
for all $\varrho \in \tilde H^2((-1,1))$. Proof of Proposition \ref{PropPropertyY_Navier} is complete.
\end{proof}

\begin{proposition}\label{PropInverseOfR_Navier}
We have the following results.
\begin{enumerate}
\item For all $\vartheta \in \tilde H^2((-1,1))$, 
\[
Y_{k,\lambda,\mu}\vartheta=\lambda(k^2\rho_0\vartheta -(\rho_0\vartheta')')+ \mu(\vartheta^{(4)}-2k^2\vartheta''+k^4\vartheta)
\]
  in $ \mathcal{D}'((-1,1))$.  

\item Let $f\in L^2((-1,1))$ be given, there exists a unique solution  $\vartheta \in \tilde H^2((-1,1))$ of 
\begin{equation}
Y_{k,\lambda,\mu}\vartheta = f \text{ in } (\tilde H^2((-1,1)))'.
\end{equation}
Moreover, we have that  $\vartheta \in H^4((-1,1))$ satisfies  the boundary conditions \eqref{NavierBound}.
\end{enumerate}
\end{proposition}
\begin{proof}
It follows from Proposition \ref{PropPropertyY_Navier} that there is a unique $\vartheta \in \tilde H^2((-1,1))$ such that
\begin{equation}\label{EqIntegralBform_Navier}
\lambda \int_{-1}^1  \rho_0 (k^2\vartheta  \varrho + \vartheta'  \varrho') dx_2+ \mu \int_{-1}^1 (\vartheta''  \varrho'' + 2k^2 \vartheta'  \varrho' +k^4 \vartheta  \varrho)dx_2 = \langle Y_{k,\lambda,\mu}\vartheta,  \varrho \rangle
\end{equation}
for all $\varrho \in C_0^{\infty}((-1,1))$.
We respectively define $(\vartheta'')'$ and $(\vartheta'')''$ in the distributional sense as the first and second derivative of $\vartheta''$ which is in $L^2((-1,1))$. Hence, Eq. \eqref{EqIntegralBform_Navier} is equivalent to
\begin{equation}\label{EqIntegralBform2_Navier}
\lambda \int_{-1}^1  \rho_0 (k^2\vartheta  \varrho + \vartheta'  \varrho') dx_2 +\mu \langle (\vartheta'')'',  \varrho\rangle + \mu \int_{-1}^1( 2k^2 \vartheta'  \varrho' +k^4 \vartheta  \varrho)dx_2 = \langle Y_{k,\lambda,\mu}\vartheta,  \varrho \rangle
\end{equation}
for all $\varrho \in C_0^{\infty}((-1,1))$. 
We deduce from \eqref{EqIntegralBform2_Navier} that
\begin{equation}\label{EqIntegralTransformB_Navier}
\lambda \int_{-1}^1 (k^2 \rho_0 \vartheta -(\rho_0 \vartheta')')  \varrho dx_2 + \mu \langle (\vartheta'')'' -2k^2 \vartheta'' + k^4 \vartheta,  \varrho \rangle =\langle Y_{k,\lambda,\mu}\vartheta,  \varrho \rangle
\end{equation}
for all $\varrho \in C_0^{\infty}((-1,1))$. The resulting equation implies  that
\begin{equation}\label{EqThetaDspace_Navier}
\mu ((\vartheta'')'' - 2k^2 \vartheta''  +k^4 \vartheta) +\lambda (k^2\rho_0 \vartheta - (\rho_0 \vartheta')') = Y_{k,\lambda,\mu}\vartheta  \quad\text{in } \mathcal{D}'((-1,1)). 
\end{equation}
The first assertion holds.

Under the assumption $f\in L^2((-1,1))$, we  improve the regularity of the weak solution $\vartheta \in \tilde H^2((-1,1))$ of \eqref{EqThetaDspace_Navier}. Indeed,  we rewrite \eqref{EqThetaDspace_Navier} as 
\[
\mu \langle (\vartheta'')'' , \varrho \rangle=  \int_{-1}^1 (Y_{k,\lambda,\mu}\vartheta +2\mu k^2 \vartheta'' -\mu k^4 \vartheta - \lambda k^2 \rho_0 \vartheta + \lambda (\rho_0 \vartheta')')  \varrho dx_2
\]
for all $\varrho \in C_0^{\infty}((-1,1))$. Since $(f+2\mu k^2 \vartheta'' -\mu k^4 \vartheta - \lambda k^2 \rho_0 \vartheta + \lambda (\rho_0 \vartheta')')$ belongs to $L^2((-1,1))$, it then follows from \eqref{EqIntegralTransformB_Navier} that   $(\vartheta'')'' \in L^2((-1,1))$.  Let $\chi \in C_0^\infty((-1,1))$ satisfy $\int_{-1}^1\chi(y)dy=1$. Using the distribution theory, we define $\Sigma \in \mathcal{D}'((-1,1))$ such that 
\begin{equation}\label{EqDefinePsi_Navier}
\langle \Sigma, \theta \rangle = \langle (\vartheta'')'', \zeta_\theta \rangle
\end{equation}
for all $\theta \in C_0^{\infty}((-1,1))$, where 
\[
\zeta_\theta(x_2) = \int_{-1}^{x_2}\Big( \theta(y) - \chi(y) \int_{-1}^1 \theta(s) ds\Big) dy
\]
for all $-1<x_2<1$. We  obtain 
\[
\langle \Sigma', \theta \rangle = - \langle \Sigma, \theta' \rangle = -  \langle (\vartheta'')'', \zeta_{\theta'} \rangle.
\]
Note that 
\[
\langle(\vartheta'')'', \zeta_{\theta'} \rangle = \langle (\vartheta'')'', \theta(x_2) - \int_{-1}^{x_2} \chi(y) \int_{-1}^1 \theta'(s)ds dy \rangle = \langle (\vartheta'')'', \theta \rangle, 
\]
this yields $\langle \Sigma', \theta \rangle = -\langle (\vartheta'')'', \theta\rangle$. Hence, we have  that $(\vartheta'')'+ \Sigma \equiv \text{constant}$. In view of $(\vartheta'')'' \in L^2((-1,1))$ and \eqref{EqDefinePsi_Navier}, we know that $(\vartheta'')' \in L^2((-1,1))$. Since $\vartheta \in \tilde H^2((-1,1))$ and $(\vartheta'')', (\vartheta'')'' \in L^2((-1,1))$, it tells us that $\vartheta$ belongs to $H^4((-1,1))$ and we can take their traces of derivatives of $\vartheta$ up to order 3.

By  performing \eqref{EqIntegralTransformB_Navier}, we then show that $\vartheta$ satisfies \eqref{NavierBound}. Indeed, for all $\varrho \in \tilde H^2((-1,1))$, we perform  the integration by parts to obtain from \eqref{EqIntegralTransformB_Navier} that
\[
\begin{split}
 &\lambda \int_{-1}^1  \rho_0 (k^2\vartheta  \varrho + \vartheta'  \varrho') dx_2 +  \mu \int_{-1}^1 (\vartheta''  \varrho'' + 2k^2 \vartheta'  \varrho' +k^4 \vartheta  \varrho)dx_2 \\
&\quad- \lambda \rho_0\vartheta' \varrho \Big|_{-1}^1  + \mu \Big(  \vartheta''' \varrho \Big|_{-1}^1 - \vartheta'' \varrho' \Big|_{-1}^1 - 2k^2  \vartheta' \varrho \Big|_{-1}^1 \Big) = \int_{-1}^1 (Y_{k,\lambda,\mu}\vartheta ){\varrho} dx_2.
\end{split}
\]
It then follows from the definition of the bilinear form $\bB_{k,\lambda,\mu}$ that
\begin{equation}\label{EqBvImply_Navier}
 \lambda \rho_0\vartheta' \varrho \Big|_{-1}^1  - \mu \Big(  \vartheta''' \varrho \Big|_{-1}^1 - \vartheta'' \varrho' \Big|_{-1}^1 - 2k^2  \vartheta' \varrho \Big|_{-1}^1 \Big)=\xi_-\vartheta'(-1)\varrho'(-1) +\xi_+ \vartheta'(1)\varrho'(1),
\end{equation}
for all  $\varrho \in \tilde H^2((-1,1))$.  By collecting all terms  corresponding to $\varrho'(\pm 1)$ in  \eqref{EqBvImply_Navier}, we deduce that
\[
\mu \vartheta''(\pm 1)=\pm \xi_{\pm}\vartheta'(\pm 1).
\]
This yields that $\vartheta$ satisfies \eqref{NavierBound}.
The proof of Proposition \ref{PropInverseOfR_Navier} is complete.
\end{proof}

We obtain more information on the inverse operator $Y_{k,\lambda,\mu}^{-1}$.
\begin{proposition}\label{PropInverseT_Navier}
The operator $Y_{k,\lambda,\mu}^{-1} : L^2((-1,1)) \to L^2((-1,1))$ is compact and self-adjoint. 
\end{proposition}
\begin{proof}
It follows from Proposition \ref{PropInverseOfR_Navier} that $Y_{k,\lambda,\mu}$, being supplemented with \eqref{NavierBound}, admits an inverse operator $Y_{k,\lambda,\mu}^{-1}$ from $L^2((-1,1))$ to a subspace of $H^4((-1,1))$ requiring all elements satisfy \eqref{NavierBound}, which is  symmetric  due to Proposition \ref{PropPropertyY_Navier}. We compose $Y_{k,\lambda,\mu}^{-1}$ with the continuous injection from $H^4((-1,1))$ to $L^2((-1,1))$. Notice that  the embedding $H^p((-1,1)) \hookrightarrow H^q((-1,1))$ for $p>q\geqslant 0$ is compact. Therefore,   $Y_{k,\lambda,\mu}^{-1}$ is compact and self-adjoint from $L^2((-1,1))$ to $L^2((-1,1))$. 
\end{proof}

\section{Linear instability}\label{SectProofThmLinear}

\subsection{A sequence of characteristic values}

We continue considering $\lambda \geq 0$ and  $k\in L^{-1}\mathbb{Z}\setminus\{0\}$ being fixed. We study the operator $S_{k,\lambda,\mu} := \cM Y_{k,\lambda,\mu}^{-1}\cM$, where $\cM$ is the operator of multiplication by $\sqrt{\rho'_0}$. 
\begin{proposition}
Under the hypothesis \eqref{RhoAssume}, the operator $S_{k,\lambda,
\mu} : L^2((-1,1)) \to L^2((-1,1))$ is compact and self-adjoint.
\end{proposition}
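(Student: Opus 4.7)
The plan is to deduce both properties directly from Proposition \ref{PropInverseT}, which already provides that $Y_{k,\lambda,\mu}^{-1}:L^2((-1,1))\to L^2((-1,1))$ is compact and self-adjoint. All that remains is to understand the multiplication operator $\cM$ and how it interacts with $Y_{k,\lambda,\mu}^{-1}$ under composition.

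First I would verify that $\cM$ is a bounded self-adjoint operator on $L^2((-1,1))$. Under hypothesis \eqref{RhoAssume}, we have $\rho_0\in C^1([-1,1])$ with $\rho_0'>0$ on the compact interval $[-1,1]$, so $\sqrt{\rho_0'}$ is continuous, real-valued and bounded by some constant depending only on $\rho_0$. Hence $\cM$ is bounded on $L^2((-1,1))$, and since $\sqrt{\rho_0'}$ is real-valued, $\cM$ is symmetric, i.e. $\cM^*=\cM$.

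Next I would combine these facts. Compactness of $S_{k,\lambda,\mu}$ follows from the standard ideal property: the composition of a compact operator with bounded operators on either side is again compact, so $\cM\,Y_{k,\lambda,\mu}^{-1}\,\cM$ is compact from $L^2((-1,1))$ to itself. For self-adjointness, using that adjoints reverse the order of composition together with $\cM^*=\cM$ and $(Y_{k,\lambda,\mu}^{-1})^*=Y_{k,\lambda,\mu}^{-1}$, we compute
\[
S_{k,\lambda,\mu}^{*}=(\cM\,Y_{k,\lambda,\mu}^{-1}\,\cM)^{*}=\cM^{*}\,(Y_{k,\lambda,\mu}^{-1})^{*}\,\cM^{*}=\cM\,Y_{k,\lambda,\mu}^{-1}\,\cM=S_{k,\lambda,\mu}.
\]

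There is no real obstacle here; the proposition is essentially a corollary of Proposition \ref{PropInverseT} together with the elementary observation that $\sqrt{\rho_0'}\in L^\infty((-1,1))$. The only point to be careful about is that the boundedness of $\sqrt{\rho_0'}$ genuinely uses the continuity of $\rho_0'$ on the closed interval, which is guaranteed by \eqref{RhoAssume}; without the $C^1$ hypothesis up to the boundary, $\cM$ could fail to be bounded and the argument would collapse.
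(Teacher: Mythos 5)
Your proposal is correct and follows essentially the same route as the paper: invoke Proposition \ref{PropInverseT} for the compactness and self-adjointness of $Y_{k,\lambda,\mu}^{-1}$, note that $\cM$ (multiplication by the real bounded function $\sqrt{\rho_0'}$, bounded thanks to \eqref{RhoAssume}) is bounded and self-adjoint, and conclude by the ideal property of compact operators and the reversal rule for adjoints. You merely spell out the details that the paper leaves implicit.
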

\begin{proof}
Due to the assumption of $\rho_0$ \eqref{RhoAssume}, the operator $S_{k,\lambda,\mu}$ is well-defined and bounded from $L^2((-1,1))$ to itself. $Y_{k,\lambda,\mu}^{-1}$ is compact, so is $S_{k,\lambda,\mu}$.   Moreover, because both  the inverse $Y_{k,\lambda,\mu}^{-1}$ and $\cM$ are self-adjoint, the self-adjointness of $S_{k,\lambda,\mu}$ follows. 
\end{proof}
As a result of the spectral theory of compact and self-adjoint operators, the point spectrum of $S_{k,\lambda,\mu}$ is discrete, i.e. is a  sequence $\{\gamma_n(k,\lambda,\mu)\}_{n\geqslant 1}$ of   eigenvalues of $S_{k,\lambda,\mu}$ that tends to 0 as $n\to \infty$, associated with normalized orthogonal eigenvectors $\{\varpi_n\}_{n\geqslant 1}$ in $L^2((-1,1))$.  That means 
\[
S_{k,\lambda,\mu}\varpi_n= \cM Y_{k,\lambda,\mu}^{-1}\cM \varpi_n= \gamma_n(k,\lambda,\mu) \varpi_n .
\]
So that  $\phi_n = Y_{k,\lambda,\mu}^{-1}\cM \varpi_n$ belongs to  $H^4((-1,1))$ and satisfies \eqref{NavierBound}.  One  thus has
\begin{equation}\label{EqRf_n}
\gamma_n(k,\lambda,\mu) Y_{k,\lambda,\mu}\phi_n =  \rho_0' \phi_n
\end{equation}
and $\phi_n$ satisfies \eqref{NavierBound}. Eq. \eqref{EqRf_n} also tells us that $\gamma_n(k,\lambda,\mu) >0$ for all $n$. Indeed, we obtain 
\[
\gamma_n(k,\lambda,\mu)\int_{-1}^1 (Y_{k,\lambda,\mu}\phi_n)  \phi_n dx_2= \int_{-1}^1 \rho_0'\phi_n^2 dx_2.
\]
That implies
\begin{equation}\label{EqPsi_nB}
\gamma_n(k,\lambda,\mu) \bB_{k,\lambda,\mu}(\phi_n,\phi_n) = \int_{-1}^1 \rho_0'\phi_n^2 dx_2.
\end{equation}
Since $\bB_{k,\lambda,\mu}(\phi_n,\phi_n) >0$ and $\rho_0' >0$ on $(-1,1)$, we know that $\gamma_n(k,\lambda,\mu)$ is positive. Hence, by reordering, we have that $\gamma_n(k,\lambda,\mu)$ is a positive sequence decreasing towards 0 as $n\to \infty$.

For each $n$,   $\phi_n$ is a solution of \eqref{MainEq}-\eqref{NavierBound} if and only if  there are positive $\lambda_n$  such that \eqref{EqFindLambda} holds.
To solve \eqref{EqFindLambda}, we use the two following lemmas.
\begin{lemma}\label{AlphaCont}
For each $n$, 
\begin{itemize}
\item $\gamma_n(k,\lambda,\mu)$ and $\phi_n$ are differentiable in $\lambda$.
\item $\gamma_n(k,\lambda,\mu)$ is decreasing in $\lambda$.
\end{itemize}
\end{lemma}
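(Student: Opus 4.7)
The plan is to combine analytic perturbation theory (for differentiability) with a min-max characterization (for strict monotonicity), exploiting the fact that $Y_{k,\lambda,\mu}$ depends \emph{affinely} on $\lambda$:
\[
Y_{k,\lambda,\mu} = Y_{k,0,\mu} + \lambda\, T, \qquad T\phi := k^2\rho_0\phi - (\rho_0\phi')'.
\]
Here $T$ is the symmetric, non-negative operator associated with the continuous bilinear form $(\vartheta,\varrho)\mapsto \int_{-1}^1\rho_0(k^2\vartheta\varrho + \vartheta'\varrho')\,dx_2$ on $\tilde H^2((-1,1))$, and invertibility of $Y_{k,\lambda,\mu}$ for every $\lambda\geq 0$ is guaranteed by the coercivity established in Lemma \ref{PropPropertyR} (since $\mu>\mu_c(k,\Xi)$).

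\textbf{Differentiability.} Fixing $\lambda_0\geq 0$, I would write
\[
Y_{k,\lambda,\mu}^{-1} = \bigl(I + (\lambda-\lambda_0)\,Y_{k,\lambda_0,\mu}^{-1}T\bigr)^{-1} Y_{k,\lambda_0,\mu}^{-1},
\]
which converges as a Neumann series in $\mathcal{L}(L^2((-1,1)))$ in a neighborhood of $\lambda_0$. Hence $\lambda\mapsto Y_{k,\lambda,\mu}^{-1}$, and therefore the compact self-adjoint family $S_{k,\lambda,\mu}=\cM Y_{k,\lambda,\mu}^{-1}\cM$, is real-analytic in operator norm. Kato's theory of analytic families of self-adjoint operators \cite{Kato} then produces real-analytic local branches $\lambda\mapsto(\tilde\gamma_n(\lambda),\tilde\varpi_n(\lambda))$; composing with the analytic map $Y_{k,\lambda,\mu}^{-1}\cM$ gives an analytic branch of the associated $\phi_n$. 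This yields differentiability (in fact analyticity) of $\gamma_n$ and $\phi_n$ in $\lambda$.

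\textbf{Monotonicity.} I would invoke the Courant--Fischer min-max characterization of the generalized eigenvalue problem \eqref{EqRf_n}: since $Y_{k,\lambda,\mu}$ is positive and $\rho_0'$ is a positive multiplier,
\[
\gamma_n(k,\lambda,\mu) = \max_{\substack{V\subset\tilde H^2((-1,1))\\ \dim V=n}}\ \min_{\substack{\phi\in V\\ \phi\ne 0}}\ \frac{\int_{-1}^1\rho_0'|\phi|^2\,dx_2}{\bB_{k,\lambda,\mu}(\phi,\phi)}.
\]
The numerator is independent of $\lambda$, and for each fixed nonzero $\phi\in\tilde H^2((-1,1))$ the denominator is strictly increasing in $\lambda$, because its $\lambda$-coefficient $\int_{-1}^1\rho_0(k^2|\phi|^2+|\phi'|^2)\,dx_2$ is strictly positive (as $\rho_0>0$ and $k\ne 0$). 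Taking the inner minimum and outer maximum preserves strict pointwise monotonicity, so $\gamma_n$ is strictly decreasing in $\lambda$. As a cross-check, differentiating $\gamma_n\,Y_{k,\lambda,\mu}\phi_n=\rho_0'\phi_n$, pairing with $\phi_n$, and using the self-adjointness of $Y_{k,\lambda,\mu}$ to cancel the contributions from $\partial_\lambda\phi_n$ yields the Hellmann--Feynman identity
\[
\partial_\lambda\gamma_n = -\gamma_n\,\frac{\int_{-1}^1\rho_0(k^2|\phi_n|^2+|\phi_n'|^2)\,dx_2}{\bB_{k,\lambda,\mu}(\phi_n,\phi_n)} < 0.
\]

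\textbf{Expected obstacle.} The only delicate point is that Kato's theorem produces \emph{some} analytic labeling of the eigenvalues, which need not coincide with the descending ordering $\gamma_1\geq\gamma_2\geq\cdots$ at a crossing; a priori this might destroy differentiability in the descending indexing. However, the strict monotonicity derived above applies to every analytic branch, so if two descending-order labels ever exchanged identities they would do so by a transverse crossing of two strictly decreasing curves; this forces the descending-order function to coincide locally with a single analytic branch on each side of the crossing point, and an elementary argument recovers differentiability at the crossing itself. This resolves the only technical subtlety in combining the two ingredients.
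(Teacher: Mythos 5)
Your two main ingredients line up closely with the paper. For differentiability the paper simply invokes \cite[Lemma 3.3]{LN20}, which is the same Kato-type perturbation argument you make explicit via the affine dependence $Y_{k,\lambda,\mu}=Y_{k,0,\mu}+\lambda T$ and a Neumann series; and your ``cross-check'' Hellmann--Feynman identity is literally the paper's proof of monotonicity: it sets $z_n=\frac{d\phi_n}{d\lambda}$, differentiates $\gamma_n\,Y_{k,\lambda,\mu}\phi_n=\rho_0'\phi_n$, pairs with $\phi_n$, and uses the symmetry of $Y_{k,\lambda,\mu}$ together with the boundary conditions satisfied by $z_n$ to cancel the $z_n$-terms, leaving $\frac{d}{d\lambda}\bigl(\tfrac1{\gamma_n}\bigr)\int_{-1}^1\rho_0'|\phi_n|^2\,dx_2=\int_{-1}^1\rho_0(k^2|\phi_n|^2+|\phi_n'|^2)\,dx_2>0$. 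Your additional Courant--Fischer argument is a genuinely different and more robust route to the second assertion, since it yields strict decrease of the \emph{ordered} eigenvalues without differentiating eigenfunctions at all.

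The flaw is in your final paragraph. If two Kato-analytic branches, both strictly decreasing, cross transversally at some $\lambda_0$ (different slopes there), then the descending-order labels are $\max$ and $\min$ of the two branches near $\lambda_0$, and these have a corner at $\lambda_0$: the one-sided derivatives are the two distinct slopes. Strict monotonicity of each branch does nothing to exclude this, so the claimed ``elementary argument'' recovering differentiability of the descending-ordered $\gamma_n$ at a crossing does not exist; moreover the ordered eigenfunctions may swap discontinuously there, so differentiability of $\phi_n$ in the ordered labeling fails as well. The correct (and sufficient) statement — which is what the paper, via \cite{LN20}, actually relies on downstream — is that differentiability holds along the analytic branches and each branch is strictly decreasing; combined with the limits \eqref{LimitGammaRight}--\eqref{LimitGammaLeft} this is all that is needed to solve $\gamma_n(k,\lambda,\mu)=\frac{\lambda}{gk^2}$ uniquely in the proof of Theorem \ref{MainThm}. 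So either drop the claim about the ordered labels at crossings or assume simplicity; as written, that step is false.
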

\begin{proof}
The proof of Lemma \ref{AlphaCont}(1) is the same as \cite[Lemma 3.3]{LN20}, we omit the details here. We now prove that $\gamma_n(k,\lambda,\mu)$ is decreasing in $\lambda$.

Let $z_n= \frac{d\phi_n}{d\lambda}$, it follows from  \eqref{EqRf_n} that 
\begin{equation}\label{1stEqDeriTz_n}
k^2\rho_0\phi_n- (\rho_0\phi_n')'+ Y_{k,\lambda,\mu}z_n = \frac1{\gamma_n(k,\lambda,\mu)} \rho_0'z_n+ \frac{d}{d\lambda}\Big( \frac1{\gamma_n(k,\lambda,\mu)}\Big)\rho_0'\phi_n
\end{equation}
on $(-1,1)$. At $x_2=\pm 1$, we have 
\begin{equation}\label{BoundZ_n}
\begin{cases}
z_n(-1)=z_n(1)=0,\\
\mu z_n''(1)= \xi_+ z_n'(1),\\
\mu z_n''(-1)= -\xi_- z_n'(-1).
\end{cases}
\end{equation}
Multiplying by $\phi_n$ on both sides of \eqref{1stEqDeriTz_n}, we obtain that 
\begin{equation}\label{2ndEqDeriTz_n}
\begin{split}
&\int_{-1}^1 (k^2\rho_0\phi_n- (\rho_0\phi_n')') \phi_n dx_2+ \int_{-1}^1 (Y_{k,\lambda,\mu}z_n) \phi_n dx_2 \\
&\qquad\qquad= \frac1{\gamma_n(k,\lambda,\mu)} \int_{-1}^1 \rho_0'z_n \phi_n dx_2+ \frac{d}{d\lambda}\Big( \frac1{\gamma_n(k,\lambda,\mu)}\Big) \int_{-1}^1\rho_0'\phi_n^2 dx_2.
\end{split}
\end{equation}
Note that $z_n$ enjoys \eqref{BoundZ_n}, then 
\[
\int_{-1}^1 (Y_{k,\lambda,\mu}z_n) \phi_n dx_2 = \int_{-1}^1(Y_{k,\lambda,\mu}\phi_n)z_n dx_2 = \frac1{\gamma_n(k,\lambda,\mu)} \int_{-1}^1 \rho_0'z_n \phi_n dx_2.
\]
That implies
\begin{equation}\label{3rdEqDeriTz_n}
\frac{d}{d\lambda}\Big( \frac1{\gamma_n(k,\lambda,\mu)}\Big) \int_{-1}^1\rho_0'\phi_n^2 dx_2 = \int_{-1}^1(k^2\rho_0\phi_n- (\rho_0\phi_n')') \phi_n dx_2.
\end{equation}
Using the integration by parts, we obtain from \eqref{3rdEqDeriTz_n} that 
\[
\frac{d}{d\lambda}\Big( \frac1{\gamma_n(k,\lambda,\mu)}\Big) \int_{-1}^1\rho_0'\phi_n^2 dx_2= \int_{-1}^1 \rho_0(k^2\phi_n^2+(\phi_n')^2)dx_2 >0.
\]
Consequently, $\gamma_n(k,\lambda,\mu)$ is decreasing in $\lambda>0$.
\end{proof}

\subsection{Proof of Theorem \ref{MainThm} and unstable mode solutions of the linearized equations}

In view of Lemma \ref{AlphaCont}, we are able to prove Theorem \ref{MainThm}.

\begin{proof}[Proof of Theorem \ref{MainThm}]
For each $n$, there is only one solution $\lambda_n$  of \eqref{EqFindLambda}. Indeed, using \eqref{EqPsi_nB}, we know that
\[
\frac1{\gamma_n(k,\lambda,\mu)} \int_{-1}^1 \rho_0' \phi_n^2 dx_2= \int_{-1}^1 (Y_{k,\lambda,\mu}\phi_n) \phi_n dx_2 = \bB_{k,\lambda,\mu}(\phi_n,\phi_n).
\]
Hence, it follows from \eqref{LowerBoundB_2_Navier} that 
\[
\begin{split}
\frac1{\gamma_n(k,\lambda,\mu)} \int_{-1}^1 \rho_0' \phi_n^2 dx_2 &\geq \lambda\int_{-1}^1\rho_0(k^2\phi_n^2+(\phi_n')^2) dx_2 \\
&\qquad+(\mu-\mu_c(k,\Xi)) \int_{-1}^1((\phi_n'')^2+2k^2(\phi_n')^2+k^4\phi_n^2)dx_2 \\
&\geq \lambda k^2 \int_{-1}^1\rho_0\phi_n^2 dx_2 +(\mu-\mu_c(k,\Xi)) k^4 \int_{-1}^1\phi_n^2dx_2.
\end{split}
\]
That implies
\[
\frac1{L_0\gamma_n(k,\lambda,\mu)}  \geqslant \lambda  k^2+ \frac{(\mu-\mu_c(k,\Xi))k^4}{\rho_+}.
\]
Consequently, for all $n\geqslant 1$,
\begin{equation}\label{LimitGammaRight}
 \frac{\lambda}{\gamma_n(k,\lambda,\mu)} > gk^2 \text{ for } \lambda \text{ large}.
\end{equation}
Meanwhile, for all $n\geq 1$ and $\lambda\leq \frac12 \sqrt\frac{g}{L_0}$,
\begin{equation}\label{LimitGammaLeft}
 \frac{\lambda}{\gamma_n(k,\lambda,\mu)} \leq \frac{\lambda}{\gamma_n(k,\frac12\sqrt\frac{g}{L_0},\mu)} \to 0  \text{ as } \lambda\to 0.
\end{equation}
In view of \eqref{LimitGammaRight}, \eqref{LimitGammaLeft} and Lemma \ref{AlphaCont}, we obtain only one solution $\lambda_n$ of \eqref{EqFindLambda} and 
$(\lambda_n,\phi_n)$ satisfies \eqref{MainEq}-\eqref{NavierBound}. That means for all $n$, $\lambda_n$ is a characteristic value, hence it is bounded by $\sqrt\frac{g}{L_0}$.

We now prove that $(\lambda_n)_{n\geq1}$ decreases towards 0 as $n\to \infty$. If $\lambda_m<\lambda_{m+1}$ for some $m\geq 1$, we have  
\[
\gamma_m(k,\lambda_m,\mu) > \gamma_m(k,\lambda_{m+1},\mu).
\]
Meanwhile, we also have
\[
\gamma_m(k,\lambda_{m+1},\mu) > \gamma_{m+1}(k,\lambda_{m+1},\mu).
\]
That implies 
\[
\frac{\lambda_m}{gk^2}= \gamma_m(k,\lambda_m,\mu) >  \gamma_{m+1}(k,\lambda_{m+1},\mu) = \frac{\lambda_{m+1}}{gk^2}.
\]
That contradiction tells us that $(\lambda_n)_{n\geq 1}$ is a decreasing sequence.  Suppose that 
\[
\lim_{n\to \infty} \lambda_n = d_0 > 0.
\]
Note that for all $n$, $\gamma_n(k,\lambda_n,\mu) = \frac{\lambda_n}{gk^2}$, then
\[
\gamma_n(k,d_0,\mu) \geq \gamma_n(k,\lambda_n,\mu) = \frac{\lambda_n}{gk^2}.
\]
Let $n\to \infty$, we get that $0\geq d_0$, a contradiction. Hence $\lambda_n$ decreases towards 0 as $n\to \infty$. The proof of Theorem \ref{MainThm} is complete. 
\end{proof}

We now solve the linearized equations \eqref{EqLinearized} to prepare for our nonlinear part. 

\begin{proposition}\label{PropSolEqLinear}
For each $k\in L^{-1}\mathbb{Z}\setminus \{0\}\setminus\{0\}$ and for all $\mu>\mu_c(k,\Xi)$, there exists an infinite sequence of solutions $(n\geq 1)$
\[
\begin{split}
e^{\lambda_n(k,\mu) t} \vec U_n(k,\vec x) 
&=e^{\lambda_n(k,\mu) t}(\sigma_n, \vec u_n, p_n)^T(k,\vec x) \\
&= e^{\lambda_n(k,\mu) t} 
\begin{pmatrix}
\cos(kx_1)\omega_n(k,x_2) \\
 \sin(kx_1)\theta_n(k,x_2) \\
  \cos(kx_1)\phi_n(k,x_2) \\
   \cos(kx_1)q_n(k,x_2)
\end{pmatrix}
\end{split}
\]
to the linearized equation \eqref{EqLinearized}-\eqref{BoundLinearized}, such that 
\[
 \sigma_n \in H^2(\Omega), \vec u_n \in (H^3(\Omega))^2 \text{ and } p_n \in H^1(\Omega).
 \] 
\end{proposition}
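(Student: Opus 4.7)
The proof is essentially a packaging of Theorem \ref{MainThm}: given the pair $(\lambda_n(k,\mu), \phi_n)$ supplied by that theorem, the remaining components of the normal mode are forced by the algebraic relations \eqref{EqPressure} that were already used to reduce \eqref{SystemModes} to the scalar ODE \eqref{MainEq}. Concretely, I would define, for each $n\geq 1$,
\begin{equation*}
\omega_n := -\frac{\rho_0'}{\lambda_n}\phi_n, \qquad \theta_n := -\frac{1}{k}\phi_n', \qquad q_n := -\frac{1}{k^2}\bigl(\lambda_n \rho_0 \phi_n' + \mu(k^2\phi_n' - \phi_n''')\bigr),
\end{equation*}
and assemble $\vec{U}_n$ via the normal-mode ansatz \eqref{NormalModes} with $\sigma_n=\cos(kx_1)\omega_n$, $u_{n,1}=\sin(kx_1)\theta_n$, $u_{n,2}=\cos(kx_1)\phi_n$ and $p_n=\cos(kx_1)q_n$.

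Verification that $e^{\lambda_n t}\vec{U}_n$ solves \eqref{EqLinearized}-\eqref{BoundLinearized} reduces, by the reversibility of the derivation in Section \ref{SectMainResults}, to checking the ODE system \eqref{SystemModes} together with the boundary conditions \eqref{BoundEigenfunctions}. Of these, $\eqref{SystemModes}_1$, $\eqref{SystemModes}_2$ and $\eqref{SystemModes}_4$ are tautologies from the definitions of $\omega_n, \theta_n, q_n$, while $\eqref{SystemModes}_3$ becomes, after substitution of $\omega_n, q_n$ and multiplication by $\lambda_n k^2$, exactly \eqref{MainEq}, which is satisfied by $\phi_n$ by hypothesis. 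For the boundary, $\phi_n(\pm 1) = 0$ is immediate from \eqref{NavierBound}, and the Robin conditions translate via
\begin{equation*}
\mu\, \theta_n'(\pm 1) \,=\, -\tfrac{\mu}{k}\phi_n''(\pm 1) \,=\, \mp \tfrac{\xi_\pm}{k}\phi_n'(\pm 1) \,=\, \pm \xi_\pm \theta_n(\pm 1),
\end{equation*}
which recovers the boundary condition on $\theta_n$ in \eqref{BoundEigenfunctions}.

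The regularity claims then follow directly from the tensor-product structure of the mode. Since $\phi_n \in H^4((-1,1))$, one has $\theta_n \in H^3((-1,1))$, and $q_n$ is a combination of $\phi_n'$ and $\phi_n'''$ with $C^1$ coefficients, so $q_n \in H^1((-1,1))$. Tensoring with the smooth trigonometric $x_1$-profiles yields $\vec{u}_n \in (H^3(\Omega))^2$ and $p_n \in H^1(\Omega)$. For $\sigma_n = \cos(kx_1)\omega_n$, the one-dimensional regularity of $\omega_n = -\rho_0'\phi_n/\lambda_n$ under \eqref{RhoAssume} together with the smoothness of $\cos(kx_1)$ gives $\sigma_n \in H^2(\Omega)$. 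All substantive analytic work has already been done in Theorem \ref{MainThm}; there is no real obstacle here, the task being purely algebraic bookkeeping across the four components of the ansatz, with only a mild point of care needed for the product $\rho_0' \phi_n$ in the regularity of $\sigma_n$.
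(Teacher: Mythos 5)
Your proposal is correct and takes essentially the same route as the paper: starting from the pair $(\lambda_n,\phi_n)$ of Theorem \ref{MainThm}, one defines $\omega_n$, $\theta_n$, $q_n$ exactly by the relations \eqref{EqPressure} and tensors with the trigonometric profiles of \eqref{NormalModes}, the verification of \eqref{SystemModes} and \eqref{BoundEigenfunctions} being the reverse of the derivation in Section \ref{SectMainResults}. Your write-up is in fact more explicit than the paper's (which simply asserts the conclusion after defining $\omega_n,\theta_n,q_n$), and your sign check for the Robin conditions on $\theta_n$ as well as your caveat about the product $\rho_0'\phi_n$ in the claimed $H^2(\Omega)$ regularity of $\sigma_n$ are both accurate.
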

\begin{proof}
For each   solution $\lambda_n\in  (0,\sqrt{\frac{g}{L_0}}) $ of \eqref{EqFindLambda}, we then have a solution $\phi_n= Y_{k,\lambda_n,\mu}^{-1}\cM \varpi_n \in H^4((-1,1))$ of \eqref{MainEq}-\eqref{NavierBound} as $\lambda=\lambda_n$.  We then find  a solution  to the system \eqref{SystemModes} as $\lambda=\lambda_n$. 
First, we obtain $\theta_n =-\frac{\phi_n'}k$ and $\omega_n= - \frac{\rho_0'\phi_n}{\lambda_n}$.
Due to \eqref{EqPressure},  we get
\[
q_n = -\frac1{k^2}(\lambda_n\rho_0 \phi_n' +\mu(k^2\phi_n'-\phi_n''')) \in H^1((-1,1)).
\]
With a solution $(\omega_n, \theta_n, \phi_n, q_n)$ of \eqref{SystemModes},  we then conclude that 
\[\begin{split}
&e^{\lambda_n(k,\mu) t}(\sigma_n, u_{n,1}, u_{n,2}, p_n)^T(k,\vec x)\\
&= e^{\lambda_n(k,\mu) t} (\cos(kx_1)\omega_n(k,x_2), \sin(kx_1)\theta_n(k,x_2), \cos(kx_1)\phi_n(k,x_2), \cos(kx_1)q_n(k,x_2))^T
\end{split}\]
is a solution to the linearized equations \eqref{EqLinearized}-\eqref{BoundLinearized}.
\end{proof}

\section{Nonlinear instability}\label{SectProofNonLinear}

\subsection{The local existence}
The first important things are the local existence of strong solutions to the nonlinear  equations and \textit{a priori} nonlinear energy estimates to those solutions. We restate Proposition 4.1 of \cite{DJL20}.

\begin{proposition}\label{PropLocalSolution}
Suppose that the steady state satisfies \eqref{RhoAssume}. For any given initial data $(\sigma_0, \vec u_0) \in (H^1(\Omega) \cap L^{\infty}(\Omega)) \times (H^2(\Omega))^2$ satisfying $\text{div}\vec u_0=0$, and also being compatible with the boundary conditions \eqref{EqNavierBoundary}, the nonlinear  equations \eqref{EqPertur} has a local strong solution
\begin{equation}\label{RegularityOfSolutions}
(\sigma,\vec u,\nabla q) \in C([0,T^{\max}), H^1(\Omega) \times (H^2(\Omega))^2 \times (L^2(\Omega)))^2.
\end{equation}
Let $\cE(t):=\sqrt{\|\sigma(t)\|_{H^1(\Omega)}^2 +\|\vec u(t)\|_{H^2(\Omega)}^2}$ and $\delta_0>0$ be sufficiently small, we further get that  if  $\sup_{0\leq s\leq t}\cE(t) \leq \delta_0$, there holds 
\begin{equation}\label{NonlinearEnergyIne}
\begin{split}
\cE^2(t) + \|(\nabla q,\partial_t\vec u)\|_{L^2(\Omega)}^2 &+ \int_0^t (\|\partial_t \vec u(s)\|_{H^1(\Omega)}^2+\|\vec u(s)\|_{H^2(\Omega)}^2) ds \\
&\leq C_0 \Big( \cE^2(0)+ \int_0^t \|(\sigma,\vec u)(s)\|_{L^2(\Omega)}^2 ds \Big).
\end{split}
\end{equation}
\end{proposition}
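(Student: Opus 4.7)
Since the statement is presented as a restatement of \cite[Proposition 4.1]{DJL20}, my plan is to follow the classical two-step strategy adapted to the Navier-slip setting: first construct a local strong solution by a linearization/iteration scheme, then derive \eqref{NonlinearEnergyIne} by testing with well-chosen functions and absorbing the nonlinearities using the smallness of $\cE(t)$.

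For local existence, I would construct iterates $(\sigma^{n+1},\vec u^{n+1}, p^{n+1})$ by
\begin{equation*}
\begin{cases}
\partial_t \sigma^{n+1} + \vec u^n \cdot \nabla(\rho_0 + \sigma^{n+1}) = 0,\\
(\rho_0+\sigma^{n+1})\partial_t \vec u^{n+1} + (\rho_0+\sigma^{n+1})\vec u^n \cdot \nabla \vec u^{n+1} + \nabla p^{n+1} = \mu \Delta \vec u^{n+1} - \sigma^{n+1} \vec g,\\
\text{div}\,\vec u^{n+1}=0,
\end{cases}
\end{equation*}
together with the Navier-slip boundary conditions \eqref{BoundLinearized} and initial data $(\sigma_0,\vec u_0)$. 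The transport equation is solved by characteristics, which preserves $H^1\cap L^\infty$ bounds provided $\vec u^n \in L^1_t H^2_x$ and keeps $\rho_0+\sigma^{n+1}$ bounded away from $0$ on a short time interval. The Stokes-type system for $\vec u^{n+1}$ with variable density and Navier-slip data is solved via a Galerkin scheme in a basis compatible with $\vec n \cdot \vec u=0$ and the tangential condition; standard elliptic regularity for the stationary inhomogeneous Stokes problem with Navier-slip yields $\vec u^{n+1}\in C([0,T];H^2)\cap L^2([0,T];H^3)$, $\nabla p^{n+1}\in L^2$. Uniform bounds on $[0,T_*]$ for $T_*$ small and a contraction estimate in the weaker norm $L^2\times H^1$ then produce the unique strong solution with the regularity \eqref{RegularityOfSolutions}.

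For the a priori estimate \eqref{NonlinearEnergyIne}, I would test the momentum equation by $\vec u$ and $\vec u_t$, and differentiate the continuity equation to control $\nabla\sigma$. Testing by $\vec u$ yields the basic identity
\[
\tfrac{1}{2}\tfrac{d}{dt}\int_\Omega (\rho_0+\sigma)|\vec u|^2 dx + \mu\int_\Omega |\nabla\vec u|^2 dx = -\int_\Omega \sigma \vec g\cdot \vec u\, dx + \text{boundary terms},
\]
where the boundary contributions generated by the Laplacian are turned into $-\sum_{\pm}\xi_{\pm}\int_{\Sigma_\pm} |\vec u|^2 dx_1 \le 0$ after rewriting $\mu\Delta\vec u=\mu\,\mathrm{div}(\nabla\vec u + \nabla\vec u^T)$ modulo divergence-free terms, thanks to the nonnegativity of $\xi_\pm$. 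Testing by $\vec u_t$ produces $\|\vec u_t\|_{L^2}$ and, via elliptic regularity for the associated stationary Stokes problem with Navier-slip, the $H^2$-norm of $\vec u$ and the $L^2$-norm of $\nabla p$. The nonlinear terms $\sigma\vec u_t$, $(\rho_0+\sigma)\vec u\cdot\nabla\vec u$ and $\vec u\cdot\nabla\sigma$ are bounded in $L^2$ using $H^2(\Omega)\hookrightarrow L^\infty(\Omega)$ together with $\cE(t)\le \delta_0$, so their contributions come with a prefactor $C\delta_0$ and can be absorbed into the dissipation on the left-hand side.

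The main obstacle will be the careful treatment of the Navier-slip boundary terms when integrating against the Laplacian, especially when one differentiates in time or in the tangential direction to reach $H^2$-control: one must systematically re-express $\mu\Delta\vec u$ as the divergence of the symmetric stress, identify $(\mu(\nabla\vec u+\nabla\vec u^T)\cdot\vec n)_\tau$ with $\xi_\pm \vec u$ on $\Sigma_\pm$, and use Korn's inequality together with the Poincaré inequality on the slab to recover the full $H^1$-coercivity of the dissipation (this is where the $H^1\cap L^\infty$ bound on $\sigma$ is needed, to keep $\rho_0+\sigma$ uniformly positive). Once this structural point is secured, a Grönwall argument and absorption of the cubic-in-$\cE$ remainders yield \eqref{NonlinearEnergyIne}, and a standard continuation criterion extends the solution up to the maximal time $T^{\max}$ at which $\cE$ remains $\le \delta_0$.
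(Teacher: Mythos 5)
The paper does not prove this proposition; it explicitly cites it as ``Proposition 4.1 of \cite{DJL20}'' and gives no argument of its own, so there is no in-paper proof against which to line up your sketch. Your reconstruction is a sensible high-level plan for a local existence and a priori estimate of this type, but it contains a concrete error that would derail the energy-estimate step as written.

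The sign of the Navier-slip boundary contribution is backwards. With the convention of this paper, the condition on $\Sigma_{+}$ reads $\mu\,\partial_{x_2}u_1=\xi_{+}u_1$ with $\xi_{+}\ge 0$, and on $\Sigma_{-}$ it is $\mu\,\partial_{x_2}u_1=-\xi_{-}u_1$. Integrating $\mu\Delta\vec u\cdot\vec u$ by parts (whether one uses $\Delta\vec u$ directly or $\mathrm{div}(\nabla\vec u+\nabla\vec u^{T})$) produces
\[
\mu\int_{\Omega}\Delta\vec u\cdot\vec u\,d\vec x = -\mu\|\nabla\vec u\|_{L^2(\Omega)}^{2}+\xi_{+}\!\int_{\Sigma_{+}}\!|u_1|^{2}\,dx_1+\xi_{-}\!\int_{\Sigma_{-}}\!|u_1|^{2}\,dx_1,
\]
i.e.\ the boundary terms enter with a \emph{plus} sign, exactly as one sees in the paper's own computation in Section~\ref{SectProofNonLinear}. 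They are a source that \emph{weakens} the dissipation, not a nonpositive friction term, and this is the very reason the paper introduces the critical viscosity $\mu_c(k,\Xi)$ and the whole coercivity threshold. Your statement that they are ``turned into $-\sum_{\pm}\xi_{\pm}\int_{\Sigma_\pm}|\vec u|^2\,dx_1\le 0$, thanks to the nonnegativity of $\xi_\pm$'' is the opposite of what happens. The estimate \eqref{NonlinearEnergyIne} can still be established, but the boundary integrals must be absorbed by a trace interpolation of the form $\int_{\Sigma_\pm}|u_1|^2\,dx_1\le \varepsilon\|\nabla\vec u\|_{L^2(\Omega)}^2+C_\varepsilon\|\vec u\|_{L^2(\Omega)}^2$ and a Gr\"onwall argument, rather than simply discarded as negative. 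Likewise, the appeal to Korn's inequality should be justified carefully since in this sign convention the boundary work is not automatically of the helpful sign. The remaining ingredients of your sketch (iteration scheme for the transport and variable-density Stokes system, testing by $\vec u_t$, using $H^2\hookrightarrow L^\infty$ and smallness of $\cE$ to absorb cubic terms) are standard and compatible with what the cited reference is expected to do.
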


Thanks to Proposition \ref{PropSolEqLinear}, we will formulate a sequence of approximate solutions $e^{\lambda_n(k,\mu)}\vec U_n(k,\vec x)$ to the nonlinear  equations \eqref{EqPertur}-\eqref{BoundLinearized}, which are solutions to the linearized equations \eqref{EqLinearized}-\eqref{BoundLinearized}. Let us fix a $k=k_0$ such that \eqref{AssumeLambdaN} holds. For  $\delta>0$, we define
\begin{equation}
(\sigma^M,\vec u^M,q^M)(t,\vec x) :=   \sum_{j=1}^M e^{\lambda_j(k,\mu) t} \vec U_j(k,\vec x).
\end{equation}
Keeping in mind that $\min_{[-1,1]} \rho_0 >0$, then due to the embedding from $H^2(\Omega)$ to $L^{\infty}(\Omega)$, there exists a constant $\delta_0>0$ such that 
\begin{equation}\label{EqDelta_0}
\delta_0 \|\sum_{j\geq 1}\sigma_j(0,\vec x)\|_{L^{\infty}(\Omega)} > \frac12\min_{[-1,1]}\rho_0(x_2).
\end{equation}
Hence,
\[
\frac12 \min_{[-1,1]}\rho_0(x_2) \leq \min_{\Omega}(\rho_0(x_2)+\delta \sigma^M(0,\vec x))
\]
for $\delta \leq \delta_0$. By virtue of Proposition \ref{PropLocalSolution}, the nonlinear equations \eqref{EqPertur}-\eqref{BoundLinearized}  with the initial data $\delta(\sigma^M,\vec u^M,q^M)(0)$ admits a strong solution 
\[
(\sigma^{\delta},\vec u^{\delta}) \in C^0([0,T^{\max}),H^1(\Omega) \times (H^2(\Omega))^2)
\]
 with an associated pressure $q^{\delta} \in C^0([0,T^{\max}), L^2(\Omega))$. Furthermore, we have 
\[
\frac12 \min_{[-1,1]}\rho_0(x_2) \leq \inf_{\Omega}(\rho_0(x_2)+\sigma^\delta(t,\vec x))
\]
for all $t\in [0,T^{\max})$.
 
 In what follows, the constants $C_i(i\geq 1)$ are universal ones depending only on physical parameters, $M$ and $\Csf_j (j\geq 1)$. 

Let $F_M(t) = \sum_{j=j_m}^M|\Csf_j| e^{\lambda_j t}$ and $0<\epsilon_0 \ll 1$ be fixed later \eqref{EqEpsilon}. There exists  a unique $T^{\delta}$ such that $\delta F_M(T^{\delta})=\epsilon_0$. 
Let
\[
\begin{split}
C_1 &= \sqrt{\|\sigma^M(0)\|_{H^1(\Omega)}^2 + \|\vec u^M(0)\|_{H^2(\Omega)}^2},
\quad C_2 = \sqrt{\|\sigma^M(0)\|_{L^2(\Omega)}^2 + \|\vec u^M(0)\|_{L^2(\Omega)}^2}.
\end{split}
\]
We define
\begin{equation}\label{DefTstar}
\begin{split}
T^{\star} &:= \sup\Big\{t\in (0,T^{\max})| \cE(\sigma^{\delta}(t), \vec u^{\delta}(t)) \leq C_1\delta_0\}>0,\\
T^{\star\star} &:= \sup \{t\in (0,T^{\max})| \|(\sigma^{\delta},\vec u^{\delta})(t)\|_{L^2(\Omega)} \leq 2 C_2 \delta F_M(t)\Big\}>0.
\end{split}
\end{equation}
Note that $\cE(\sigma^{\delta}(0), \vec u^{\delta}(0)) = C_1\delta < C_1\delta_0$ and because of \eqref{RegularityOfSolutions}, we  then have $T^{\star}>0$. Similarly, we have $T^{\star\star}>0$. Then for all $t\leq \min\{T^{\delta}, T^{\star}, T^{\star\star}\}$, it follows from \eqref{NonlinearEnergyIne} that 
\begin{equation}\label{EnergyNormDelta}
\cE^2(\sigma^{\delta}(t), \vec u^{\delta}(t)) +\|\partial_t \vec u^{\delta}(t)\|_{L^2(\Omega)}^2+\int_0^t \|\nabla \partial_t\vec  u^{\delta}(\tau)\|_{L^2(\Omega)}^2d\tau \leq C_3 \delta^2 F_M^2(t).
\end{equation}

\subsection{The difference functions}

Let 
\[
\sigma^d= \sigma^{\delta} -\delta \sigma^M, \quad \vec u^d=\vec u^{\delta}- \delta \vec u^M, \quad q^d =q^{\delta}- \delta q^M.
\]
Then $(\sigma^d, \vec u^d, q^d)$ satisfies 
\begin{equation}\label{EqDiff}
\begin{cases}
\partial_t \sigma^d+ \rho_0' u_2^d = -\vec u^{\delta} \cdot \nabla \sigma^{\delta},\\
\rho_0 \partial_t \vec u^d-\mu \Delta \vec u^d+ \nabla q^d= -\sigma^{\delta}\partial_t\vec u^\delta- (\rho_0 +\sigma^{\delta}) \vec u^{\delta} \cdot \nabla \vec u^{\delta} - g\sigma^d \vec e_2,\\
\text{div} \vec u^d=0,
\end{cases}
\end{equation}
along with the initial condition,
\begin{equation}\label{InitialEqDiff}
(\sigma^d,\vec u^d)(0)=0
\end{equation}
and the boundary conditions,
\begin{equation}\label{BoundEqDiff}
\begin{cases}
u_2^d=0,&\quad\text{on } \Sigma_\pm, \\
\mu\partial_{x_2} u_1^d= \xi_+ u_1^d &\quad\text{on } \Sigma_+,\\
\mu\partial_{x_2} u_1^d=-\xi_- u_1^d  &\quad\text{on } \Sigma_-.
\end{cases}
\end{equation}
The compatibility conditions read as
\begin{equation}
u_1^d(0,x_1,-1)= u_1^d(0,x_1,1), \quad \text{div}\vec u^d(0)=0.
\end{equation}

We now establish the error estimate for $\|(\sigma^d,\vec u^d)\|_{L^2(\Omega)}$. 
\begin{proposition}\label{PropL2_NormU^d}
For all $t \leq \min(T^{\delta},T^{\star}, T^{\star\star})$, there holds
\begin{equation}\label{L2_NormU^d}
\|(\sigma^d,\vec u^d)(t)\|_{L^2(\Omega)}^2 \leq C_4 \delta^3 (\sum_{j=1}^N|\Csf_j| e^{\lambda_j t}+ \max(0,M-N)\max_{N+1\leq j\leq M}|\Csf_j| e^{\frac23 \nu_0\Lambda t})^3.
\end{equation}
\end{proposition}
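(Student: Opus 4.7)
The plan is to derive a combined $L^2$ energy estimate for $(\sigma^d, \vec u^d)$ that exploits the cancellation of the buoyancy cross-term, and then bootstrap a crude $\delta F_M$ a priori bound back into the nonlinear source to upgrade the natural quadratic $\delta^4 F_M^4$ estimate into the cubic $\delta^3 F_{M,N}^3$ estimate of \eqref{L2_NormU^d}. The key observation is that a single factor of $\sqrt{\mathcal{E}_d(t)}$ sits on the right-hand side of the resulting differential inequality, so replacing it by the crude $\delta F_M(t)$ bound converts $F_M^2$ into $F_M^3$ and $\delta^2$ into $\delta^3$.

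First I would test the momentum equation in \eqref{EqDiff} against $\vec u^d$ and the transport equation against $(g/\rho_0')\sigma^d$, then add the two identities: the terms $-g\int_\Omega \sigma^d u_2^d\, d\vec x$ and $\int_\Omega \rho_0' u_2^d \cdot (g\sigma^d/\rho_0')\, d\vec x$ cancel exactly. Setting
\[
\mathcal{E}_d(t) := \int_\Omega \rho_0 |\vec u^d|^2\, d\vec x + g\int_\Omega \frac{|\sigma^d|^2}{\rho_0'}\, d\vec x,
\]
this yields
\[
\tfrac{1}{2}\tfrac{d}{dt}\mathcal{E}_d(t) + \mu \|\nabla \vec u^d\|_{L^2(\Omega)}^2 = \xi_+\int_{\Sigma_+}|u_1^d|^2 + \xi_-\int_{\Sigma_-}|u_1^d|^2 + \mathcal{R}(t),
\]
where $\mathcal{R}(t)$ collects the genuinely nonlinear contributions from $\sigma^\delta \partial_t \vec u^\delta$, $(\rho_0 + \sigma^\delta)\vec u^\delta \cdot \nabla \vec u^\delta$, and $(g/\rho_0')\sigma^d(\vec u^\delta \cdot \nabla \sigma^\delta)$. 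The boundary integrals are absorbed into the viscous dissipation via a Fourier-in-$x_1$ decomposition together with the $k$-wise trace estimate of Proposition \ref{PropMuC}, leaving a positive residual thanks to $\mu > 3\mu_c(\Xi) \geq 3\mu_c(k,\Xi)$ for all $k \in L^{-1}\mathbb{Z}\setminus\{0\}$. The nonlinear remainder $\mathcal{R}(t)$ is controlled via $H^2(\Omega)\hookrightarrow L^\infty(\Omega)$ and the a priori bound \eqref{EnergyNormDelta}, giving $|\mathcal{R}(t)|\leq C\delta^2 F_M^2(t)\sqrt{\mathcal{E}_d(t)}$ and hence
\[
\tfrac{d}{dt}\mathcal{E}_d(t) \leq C\delta^2 F_M^2(t)\sqrt{\mathcal{E}_d(t)}.
\]

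The crude a priori bound follows from the triangle inequality: by the definition of $T^{\star\star}$ in \eqref{DefTstar}, $\|(\sigma^\delta, \vec u^\delta)(t)\|_{L^2} \leq 2C_2\delta F_M(t)$, and the explicit form of $(\sigma^M, \vec u^M)$ together with Proposition \ref{PropSolEqLinear} yields $\|(\sigma^M, \vec u^M)(t)\|_{L^2}\leq C\delta F_M(t)$, so $\sqrt{\mathcal{E}_d(t)}\leq C'\delta F_M(t)$. Inserting this into the differential inequality and using $\mathcal{E}_d(0) = 0$ gives
\[
\mathcal{E}_d(t) \leq C''\delta^3 \int_0^t F_M^3(s)\, ds.
\]
Expanding $F_M^3$ as a triple sum over $(j_1, j_2, j_3)\in\{1,\dots,M\}^3$ of $|\Csf_{j_1}\Csf_{j_2}\Csf_{j_3}|\,e^{(\lambda_{j_1}+\lambda_{j_2}+\lambda_{j_3})s}$ and integrating in time (each term bounded by $e^{(\lambda_{j_1}+\lambda_{j_2}+\lambda_{j_3})t}/(\lambda_{j_1}+\lambda_{j_2}+\lambda_{j_3})$), I would then split each index between $\{1,\dots,N\}$ and $\{N+1,\dots,M\}$, using $\lambda_j \leq \tfrac{2\nu_0}{3}\Lambda$ and $|\Csf_j|\leq \max_{N+1\leq l\leq M}|\Csf_l|$ for $j>N$ by \eqref{AssumeLambdaN}. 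The triple sum then regroups as the cube of $\sum_{j=1}^N |\Csf_j| e^{\lambda_j t} + (M-N)_+ \bigl(\max_{N+1\leq j\leq M}|\Csf_j|\bigr) e^{(2\nu_0/3)\Lambda t}$, which gives \eqref{L2_NormU^d}.

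The principal obstacle I anticipate is the treatment of the Navier-slip boundary integrals in the energy identity: unlike the 1D variational framework of Proposition \ref{PropMuC}, the tangential component $u_1^d$ does not vanish on $\Sigma_{\pm}$, so reducing the 2D boundary trace to the sharp 1D threshold $\mu_c(k,\Xi)$ requires a Fourier-in-$x_1$ decomposition combined with the divergence-free constraint $\partial_{x_1}u_1^d + \partial_{x_2}u_2^d = 0$ and the Dirichlet condition $u_2^d|_{\Sigma_{\pm}} = 0$, and then a summation across Fourier modes producing a $k$-uniform coercivity $\mu - \mu_c(\Xi) > 0$. A secondary subtlety is verifying that the triple-index rearrangement in the final step produces exactly the stated cube rather than merely a bound up to multiplicity factors, which is where the explicit $(M-N)_+$ prefactor in the definition of $F_{M,N}$ enters.
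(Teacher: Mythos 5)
There is a genuine gap, and it sits in the very first step of the argument. You claim that testing the momentum equation of \eqref{EqDiff} against $\vec u^d$ and the transport equation against $g\sigma^d/\rho_0'$, then adding, makes the buoyancy cross-terms cancel. Redo the bookkeeping: from the transport equation you obtain
\[
\frac{g}{2}\frac{d}{dt}\int_\Omega\frac{|\sigma^d|^2}{\rho_0'}\,d\vec x + g\int_\Omega\sigma^d u_2^d\,d\vec x = (\text{nonlinear}),
\]
while the momentum balance gives
\[
\frac{1}{2}\frac{d}{dt}\int_\Omega\rho_0|\vec u^d|^2\,d\vec x + \mu\|\nabla\vec u^d\|_{L^2(\Omega)}^2 - (\text{bdy}) + g\int_\Omega\sigma^d u_2^d\,d\vec x = (\text{nonlinear}).
\]
Adding these produces $+2g\int\sigma^d u_2^d$, not zero: the two cross-terms carry the \emph{same} sign when $\rho_0'>0$. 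The cancellation you describe holds only for stable stratification ($\rho_0'<0$), where $-g\int|\sigma^d|^2/\rho_0'$ is positive; for the Rayleigh--Taylor profile \eqref{RhoAssume} the quantity $\int\rho_0|\vec u^d|^2 - g\int|\sigma^d|^2/\rho_0'$ is the conserved (up to dissipation) functional, and it is indefinite. The positive-definite $\mathcal{E}_d$ you introduce is therefore \emph{not} controlled by a Gronwall estimate of the type you write down, and the proposed differential inequality $\tfrac{d}{dt}\mathcal{E}_d\leq C\delta^2 F_M^2\sqrt{\mathcal{E}_d}$ does not follow.

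A secondary but equally fatal issue is that your route cannot generate the factor $e^{\frac{2}{3}\nu_0\Lambda t}$ in the conclusion. In the paper, that exponent arises from the Gronwall factor $e^{2\nu_0\Lambda t}$ in \eqref{IneAfterGronwall}, which is in turn the output of a carefully tuned combination (the constants $m_1$, $m_2$ of \eqref{M1_constant}--\eqref{M2_constant}) of the time-differentiated momentum estimate with the variational inequality of Lemma \ref{LemIneMaximalMode}, i.e. with the spectral characterization of the maximal growth rate $\Lambda$. Your proposal never invokes $\Lambda$: after integrating $F_M^3$ directly you would obtain denominators $\lambda_{j_1}+\lambda_{j_2}+\lambda_{j_3}$ with no comparison to $2\nu_0\Lambda$, so the split in \eqref{AssumeLambdaN} between $\lambda_j > \frac{2}{3}\nu_0\Lambda$ and $\lambda_j<\frac{2}{3}\nu_0\Lambda$ has nowhere to enter. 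The paper's mechanism is essential: it is precisely the competition between the destabilizing term $g\int\rho_0'|u_2^d|^2$ (which your cancellation attempts to eliminate) and the viscous dissipation plus slip boundary terms, resolved through Lemma \ref{LemIneMaximalMode} and Lemma \ref{LemEstMu_c} in the regime $\mu>3\mu_c(\Xi)$, that produces the $e^{2\nu_0\Lambda t}$ Gronwall rate and hence the stated cube.
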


The proof of Proposition \ref{PropL2_NormU^d} relies on Lemmas \ref{LemH^sNormU^mU^d}, \ref{LemNormU^dL^2At0} \ref{PropLambda_1}, \ref{LemIneMaximalMode}, \ref{LemEstMu_c} below.
\begin{lemma}\label{LemH^sNormU^mU^d}
We have the following inequalities
\begin{equation}\label{H^sNormU^mU^d}
\sum_{0\leq s\leq 2, 0\leq \tau\leq 1}\|\partial_t^{\tau} \vec u^d(t)\|_{H^s(\Omega)} \leq C_5 \delta F_M(t),
\end{equation}
and
\begin{equation}\label{H^sNormSigma^d}
\|\sigma^d(t)\|_{H^1(\Omega)}+ \|\partial_t \sigma^d(t)\|_{L^2(\Omega)} \leq C_6\delta F_M(t).
\end{equation}
\end{lemma}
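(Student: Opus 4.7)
The plan is to split the lemma into two pieces: bounds on $\vec u^M$ follow directly from its explicit modal expansion, while bounds on the remainders $\vec u^d$ and $\sigma^d$ combine the a priori energy inequality \eqref{EnergyNormDelta} with the transport equation \eqref{EqDiff}$_1$. Recalling $\vec u^M(t,\vec x)=\delta\sum_{j=1}^M \Csf_j e^{\lambda_j(k_0,\mu) t}\vec u_j(\vec x)$, Proposition \ref{PropSolEqLinear} gives each $\vec u_j\in (H^3(\Omega))^2$, and Lemma \ref{LemEigenvalueReal}(2) gives the uniform bound $\lambda_j\leq \sqrt{g/L_0}$. Differentiating in time costs at most one factor $\lambda_j$, so for every $s\in\{0,1,2\}$ and $\tau\in\{0,1\}$,
\[
\|\partial_t^{\tau}\vec u^M(t)\|_{H^s(\Omega)}\leq \delta \sum_{j=1}^{M}|\Csf_j|\,\lambda_j^{\tau}\, e^{\lambda_j t}\|\vec u_j\|_{H^s(\Omega)}\leq C \delta F_M(t),
\]
with a constant depending on $M$, $(\Csf_j)_{j}$, and $g/L_0$.

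For $\vec u^d=\vec u^{\delta}-\vec u^M$, I would apply the triangle inequality together with \eqref{EnergyNormDelta}: this directly yields $\|\vec u^{\delta}(t)\|_{H^2(\Omega)}\leq C\delta F_M(t)$ and $\|\vec u_t^{\delta}(t)\|_{L^2(\Omega)}\leq C\delta F_M(t)$, and, combined with the previous bound on $\vec u^M$, handles the cases $\tau=0,\,s\in\{0,1,2\}$ and $\tau=1,\,s=0$. For the remaining cases $\tau=1,\,s\in\{1,2\}$, I would differentiate the momentum equation \eqref{EqDiff}$_2$ in time, treat the result as a Stokes-type system for $\vec u_t^d$ with Navier-slip boundary data \eqref{BoundEqDiff}, and invoke elliptic regularity; the right-hand side decomposes into a linear part controlled by already-estimated lower norms of $(\sigma^d,\vec u^d)$ and a nonlinear part of quadratic size $O(\delta^2 F_M^2(t))$, which is absorbed because $\delta F_M(t)\leq \epsilon_0\leq 1$ on $[0,T^{\delta}]$.

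For $\sigma^d$, the transport equation \eqref{EqDiff}$_1$ gives $\sigma_t^d=-\rho_0' u_2^d-\vec u^{\delta}\cdot\nabla\sigma^{\delta}$ with $\sigma^d(0)=0$, and a direct estimate yields
\[
\|\sigma_t^d(t)\|_{L^2(\Omega)}\leq \|\rho_0'\|_\infty\|u_2^d(t)\|_{L^2(\Omega)}+\|\vec u^{\delta}(t)\|_{L^\infty(\Omega)}\|\nabla\sigma^{\delta}(t)\|_{L^2(\Omega)}\leq C\delta F_M(t)+C\delta^2 F_M^2(t)\leq C\delta F_M(t),
\]
using $H^2(\Omega)\hookrightarrow L^\infty(\Omega)$, \eqref{EnergyNormDelta}, and $\delta F_M(t)\leq \epsilon_0\leq 1$. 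For $\|\sigma^d(t)\|_{H^1(\Omega)}$, I would integrate the transport equation in time and take one spatial derivative of its right-hand side; since $\int_0^t e^{\lambda_j\tau}\,d\tau\leq \lambda_M^{-1}e^{\lambda_j t}$, every time integral of $F_M$ or $F_M^2$ is absorbed back into $F_M(t)$, producing the desired $H^1$ bound, with the nonlinear contribution $\|\vec u^{\delta}\cdot\nabla\sigma^{\delta}\|_{H^1(\Omega)}$ controlled by products of $H^2$-norms and hence again of size $O(\delta^2 F_M^2(t))$.

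The main obstacle is the pointwise higher-regularity control of $\partial_t\vec u^d$: the a priori inequality \eqref{EnergyNormDelta} furnishes only a time-integral of $\|\nabla\vec u_t^{\delta}\|_{L^2(\Omega)}^2$ and a pointwise $L^2$-bound on $\vec u_t^{\delta}$, so recovering $\|\vec u_t^{\delta}(t)\|_{H^s(\Omega)}$ for $s\geq 1$ must be done through a Stokes regularity argument on the time-differentiated momentum equation, delicately handling the Navier-slip boundary conditions \eqref{BoundEqDiff} and exploiting that the nonlinear contributions to its right-hand side are quadratically small in $\delta F_M(t)$.
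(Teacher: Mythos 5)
Your treatment of $\vec u^M$, of $\vec u^d$ in the cases $\tau=0$, $s\in\{0,1,2\}$ and $(\tau,s)=(1,0)$, and of $\|\sigma_t^d\|_{L^2(\Omega)}$ coincides with the paper's proof: the explicit modal sum together with $\lambda_j\leq\sqrt{g/L_0}$, then the triangle inequality combined with \eqref{EnergyNormDelta}, and the equation $\eqref{EqDiff}_1$ for $\sigma_t^d$. The two places where you depart are where the argument breaks. For $\|\sigma^d(t)\|_{H^1(\Omega)}$ you propose to take one spatial derivative of $\eqref{EqDiff}_1$ and integrate in time; but the resulting term $\vec u^{\delta}\cdot\nabla\partial_{x}\sigma^{\delta}$ must then be placed in $L^2(\Omega)$, i.e. you need a pointwise-in-time $H^2$ bound on $\sigma^{\delta}$, which neither Proposition \ref{PropLocalSolution} (the local theory only gives $\sigma^{\delta}\in H^1$) nor \eqref{EnergyNormDelta} supplies; your remark that $\|\vec u^{\delta}\cdot\nabla\sigma^{\delta}\|_{H^1(\Omega)}$ is a product of $H^2$-norms is not correct for the $\sigma$-factor. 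The paper avoids this entirely by writing $\|\sigma^d\|_{H^1}\leq\|\sigma^{\delta}\|_{H^1}+\|\sigma^M\|_{H^1}$, the first term being part of $\cE$ in \eqref{EnergyNormDelta} and the second explicit, and uses $\eqref{EqDiff}_1$ only for the $L^2$ bound on $\sigma_t^d$, exactly as you do.

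For the cases $(\tau,s)=(1,1),(1,2)$ your diagnosis is accurate — \eqref{EnergyNormDelta} gives only a pointwise $L^2$ bound on $\vec u_t^{\delta}$ and a time-integrated bound on $\|\nabla\vec u_t^{\delta}\|_{L^2}$, so the paper's one-line appeal to it for all $s\leq 2$ is loose — but your proposed repair does not close. Reading the time-differentiated momentum equation as a stationary Stokes system for $\vec u_t^d$ forces $(\rho_0+\sigma^{\delta})\vec u_{tt}$ (and $\nabla q_t^d$) onto the right-hand side, and no pointwise-in-time $L^2$ bound on $\vec u_{tt}^{\delta}$, let alone one of size $O(\delta F_M(t))$, is available from the quoted local theory; this is not an "already-estimated lower norm", and without it the elliptic-regularity step has no admissible data. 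So either strengthen the a priori estimates by a genuinely parabolic argument (with compatibility conditions at $t=0$), or observe that in the only place the lemma is invoked — estimating the right-hand side of the energy identity for $\vec u_t^d$ in the proof of Proposition \ref{PropL2_NormU^d} — the pointwise $H^1$ and $H^2$ norms of $\vec u_t^d$ are not actually needed, and prove the lemma only for the norms that \eqref{EnergyNormDelta} genuinely controls.
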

\begin{proof}
For $\tau \in \{0 ,1\}$, 
\[
\partial_t^{\tau} \vec u^M(t) = \sum_{j=1}^M \lambda_j^{\tau} \Csf_j e^{\lambda_j t}\vec U_j(k_0,\vec x), 
\]
it yields,  for all $s\in \{0,1,2\}$,
\[
\|\partial_t^{\tau} \vec u^M(t)\|_{H^s(\Omega)} \leq C_7 F_M(t).
\]
In view of \eqref{EnergyNormDelta}, we then obtain that for $s\in \{0,1,2\}$ and $\tau \in \{0 ,1\}$,
\[
\|\partial_t^{\tau} \vec u^d(t)\|_{H^s(\Omega)} \leq \delta \|\partial_t^{\tau} \vec u^M(t)\|_{H^s(\Omega)}+ \|\partial_t^{\tau} \vec u^{\delta}(t)\|_{H^s(\Omega)} \leq C_8 \delta F_M(t).
\]

To prove \eqref{H^sNormSigma^d}, we use $\eqref{EqDiff}_1$ and \eqref{EnergyNormDelta} again, 
\[
\begin{split}
\|\sigma^d(t)\|_{H^1(\Omega)}+ \|\partial_t\sigma^d(t)\|_{L^2(\Omega)} &\leq \|\sigma^{\delta}(t)\|_{H^1(\Omega)}+\delta  \|\sigma^M(t)\|_{H^1(\Omega)}\\
&\qquad+ C_9 \|u_2^d(t)\|_{L^2(\Omega)}+ \|\vec u^{\delta}(t)\|_{L^2(\Omega)}\|\nabla \sigma^{\delta}\|_{L^2(\Omega)}\\
&\leq C_{10}\delta F_M(t).
\end{split}
\]
Lemma \ref{LemH^sNormU^mU^d} is proven.
\end{proof}

\begin{lemma}\label{LemNormU^dL^2At0}
There holds 
\begin{equation}\label{NormU^dL^2At0}
\|\partial_t\vec u^d(0)\|_{L^2(\Omega)}^2 \leq C_{11} \delta^3.
\end{equation}
\end{lemma}
\begin{proof}
From $\eqref{EqDiff}_{2,3}$ and the boundary conditions \eqref{BoundEqDiff}, we use the integration by parts to obtain that 
\[
\begin{split}
\int_\Omega \rho_0|\partial_t \vec u^d|^2 d\vec x &= \int_\Omega \mu \Delta\vec u^d \cdot  \partial_t \vec u^d d\vec x -\int_\Omega (\sigma^\delta \partial_t \vec u^\delta+(\rho_0+\sigma^\delta)\vec u^\delta\cdot \nabla\vec u^\delta)\cdot\partial_t \vec u^d d\vec x\\
&\qquad\quad - \int_\Omega g\sigma^d \partial_t u_2^d d\vec x.
\end{split}
\]
Thanks to Lemma \ref{LemH^sNormU^mU^d}, one has 
\begin{equation}
-\int_\Omega (\sigma^\delta \partial_t\vec u^\delta+(\rho_0+\sigma^\delta)\vec u^\delta\cdot \nabla\vec u^\delta)\cdot\partial_t\vec u^d d\vec x \leq C_{12} \delta^3 F_M^3(t).
\end{equation}
That implies 
\[
\|\partial_t\vec  u^d(t)\|_{L^2(\Omega)}^2 \leq C_{14} \Big( (\|\vec u^d(t)\|_{H^2(\Omega)} + \|\sigma^d(t)\|_{L^2(\Omega)})\|\partial_t\vec u^d(t)\|_{L^2(\Omega)} +  \delta^3F_M^3(t)\Big).
\]
Using Young's inequality, we further get 
\begin{equation}\label{NormU^dL^2At0_1}
\|\partial_t\vec u^d(t)\|_{L^2(\Omega)}^2 \leq C_{15}\Big( \|\vec u^d(t)\|_{H^2(\Omega)}^2 + \|\sigma^d(t)\|_{L^2(\Omega)}^2 + \delta^3F_M^3(t)\Big). 
\end{equation}
Letting $t\to 0$ in \eqref{NormU^dL^2At0_1}, we complete the proof Lemma \ref{LemNormU^dL^2At0}.
\end{proof}

We  derive the following lemma for $\lambda_1$.
\begin{lemma}\label{PropLambda_1}
The largest characteristic value $\lambda_1$  is the solution of the variational problem 
\begin{equation}
\frac1{gk^2} = \max_{\phi \in H^2((-1,1))} \frac{\int_{-1}^1 \rho_0' \phi^2dx_2}{\lambda_1\bB_{k,\lambda_1,\mu}(\phi,\phi)},
\end{equation}
where the bilinear form $\bB_{k,\lambda_1,\mu}$ is defined as in \eqref{BilinearForm} and $\phi_1$ is an extremal function. 
\end{lemma}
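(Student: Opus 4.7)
The plan is to interpret the claimed identity as the Rayleigh--Ritz characterization of the largest eigenvalue of the generalized eigenvalue problem $\gamma\,Y_{k,\lambda_1,\mu}\phi = \rho_0'\phi$ which, by \eqref{EqRf_n}, is precisely the problem producing the sequence $(\gamma_n(k,\lambda_1,\mu))_{n\geq 1}$ via the operator $S_{k,\lambda_1,\mu}$.

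First, I would test \eqref{EqRf_n} at $n=1$ and $\lambda=\lambda_1$ against $\phi_1$ itself and use \eqref{EqMathcalB} from Proposition \ref{PropPropertyY} to obtain
\[
\gamma_1(k,\lambda_1,\mu)\,\bB_{k,\lambda_1,\mu}(\phi_1,\phi_1)=\int_{-1}^{1}\rho_0'|\phi_1|^2\,dx_2,
\]
which combined with $\gamma_1(k,\lambda_1,\mu)=\lambda_1/(gk^2)$ from \eqref{EqFindLambda} (at $n=1$) shows that $\phi_1$ attains the ratio $1/(gk^2)$ appearing in the statement.

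Second, to show this value is the supremum, I would equip $\tilde H^2((-1,1))$ with the inner product $\bB_{k,\lambda_1,\mu}$, which is equivalent to the standard $\tilde H^2$ inner product by Lemma \ref{PropPropertyR}, and introduce the operator $\hat S:\tilde H^2((-1,1))\to\tilde H^2((-1,1))$ defined via Riesz representation by
\[
\bB_{k,\lambda_1,\mu}(\hat S\phi,\psi)=\int_{-1}^{1}\rho_0'\,\phi\,\psi\,dx_2\qquad\forall\,\psi\in\tilde H^2((-1,1)).
\]
By symmetry in $\phi,\psi$ of the right-hand side, $\hat S$ is self-adjoint with respect to $\bB_{k,\lambda_1,\mu}$, and positive definite since $\rho_0'>0$ pointwise. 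Testing its defining identity with $\psi=\hat S\phi$ and invoking coercivity yields $c\|\hat S\phi\|_{\tilde H^2}^2\leq\|\rho_0'\|_\infty\|\phi\|_{L^2}\|\hat S\phi\|_{L^2}$, hence $\|\hat S\phi\|_{\tilde H^2}\leq C\|\phi\|_{L^2}$; this factorizes $\hat S$ through the Rellich-compact embedding $\tilde H^2\hookrightarrow L^2$ and gives compactness. Moreover, $\hat S\phi=\gamma\phi$ is equivalent, via \eqref{EqMathcalB} and the regularity theory of Proposition \ref{PropInverseOfR}, to $\gamma\,Y_{k,\lambda_1,\mu}\phi=\rho_0'\phi$, so the eigenvalues of $\hat S$ coincide with the $\gamma_n(k,\lambda_1,\mu)$.

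Third, the spectral theorem for compact self-adjoint positive operators provides a $\bB_{k,\lambda_1,\mu}$-orthonormal basis $(\phi_n)_{n\geq 1}$ of eigenfunctions of $\hat S$. Decomposing any $\phi\in\tilde H^2((-1,1))$ as $\phi=\sum_n c_n\phi_n$ yields
\[
\int_{-1}^{1}\rho_0'|\phi|^2\,dx_2=\bB_{k,\lambda_1,\mu}(\hat S\phi,\phi)=\sum_n\gamma_n(k,\lambda_1,\mu)\,c_n^2\leq\gamma_1(k,\lambda_1,\mu)\,\bB_{k,\lambda_1,\mu}(\phi,\phi),
\]
so the ratio on the right-hand side of the lemma is bounded by $1/(gk^2)$, with equality at $\phi=\phi_1$ thanks to the first step. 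Dividing by $\lambda_1>0$ yields the stated identity and identifies $\phi_1$ as an extremal function. The only technical point is the compactness of $\hat S$, which is routine given the coercivity provided by Lemma \ref{PropPropertyR} and the Rellich embedding.
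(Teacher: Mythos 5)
Your proposal is correct, and it reaches the conclusion by a genuinely different route than the paper. The paper fixes the constrained problem $\beta(k,\lambda,\mu)=\max\{\int_{-1}^1\rho_0'|\phi|^2dx_2 : \lambda\bB_{k,\lambda,\mu}(\phi,\phi)=1\}$, applies the Lagrange multiplier theorem to show that any critical point yields (after the regularity bootstrap of Proposition \ref{PropInverseOfR}) an eigenpair of the $L^2$-operator $S_{k,\lambda,\mu}=\cM Y_{k,\lambda,\mu}^{-1}\cM$, hence $\beta\leq\lambda^{-1}\gamma_1$, and then proves the reverse inequality separately from the $L^2$ Rayleigh-quotient characterization of $\gamma_1$; the identity $\gamma_1(k,\lambda_1,\mu)=\lambda_1/(gk^2)$ then gives the statement. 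You instead work directly in the energy space $(\tilde H^2((-1,1)),\bB_{k,\lambda_1,\mu})$, define the operator $\hat S$ by Riesz representation, check it is positive, self-adjoint and compact (the coercivity estimate plus the Rellich embedding, exactly as you indicate), identify its eigenvalues with the $\gamma_n(k,\lambda_1,\mu)$ through the same generalized problem $\gamma Y_{k,\lambda_1,\mu}\phi=\rho_0'\phi$, and read off the bound $\int_{-1}^1\rho_0'|\phi|^2dx_2\leq\gamma_1\bB_{k,\lambda_1,\mu}(\phi,\phi)$ from the spectral decomposition, with equality at $\phi_1$ by \eqref{EqPsi_nB} and \eqref{EqFindLambda}. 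What your route buys: the supremum is shown to be attained automatically (the paper's Lagrange-multiplier step is phrased conditionally, ``if $(\beta,\tilde\phi)$ exists''), and you avoid the paper's somewhat delicate two-sided inequality argument through $\|S_{k,\lambda,\mu}\omega\|_{L^2}$. What the paper's route buys: it proves the identity $\beta(k,\lambda,\mu)=\lambda^{-1}\gamma_1(k,\lambda,\mu)$ uniformly for all $\lambda>0$ and stays entirely with the operator $S_{k,\lambda,\mu}$ already constructed, so no new operator on $\tilde H^2$ is needed. The only place where you should add a line is the assertion that the eigenvalues of $\hat S$ coincide (with multiplicities) with the $\gamma_n(k,\lambda_1,\mu)$: this follows from the explicit bijection $\phi\mapsto\omega=\gamma^{-1}\cM\phi$, $\omega\mapsto\phi=Y_{k,\lambda_1,\mu}^{-1}\cM\omega$ between eigenpairs of $\hat S$ and of $S_{k,\lambda_1,\mu}$, using that $\rho_0'$ is continuous and strictly positive on $[-1,1]$ so that $\cM$ is boundedly invertible; also note that, as in the paper's own proof, the maximization should be taken over $\tilde H^2((-1,1))$ rather than $H^2((-1,1))$ as written in the statement.
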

\begin{proof}
For all $\lambda>0$, we solve the variational problem 
\[
\beta(k,\lambda,\mu)= \max\Big( \int_{-1}^1 \rho_0' \phi^2dx_2 \Big| \phi \in \tilde H^2((-1,1)), \quad \lambda\bB_{k,\lambda,\mu}(\phi,\phi)=1\Big).
\]
Let us define the Lagrangian functional
\[
\cL_{\bB}(\phi, \beta) =  \int_{-1}^1 \rho_0' \phi^2dx_2 - \beta(\lambda\bB_{k,\lambda,\mu}(\phi,\phi)-1).
\]
Thanks to the Lagrange multiplier theorem, the extremal points of the quotient 
\[
\frac{\int_{-1}^1 \rho_0' \phi^2dx_2}{\lambda\bB_{k,\lambda,\mu}(\phi,\phi)}
\]
are necessarily the stationary points $( \beta_\star,\phi_\star)$ of $\cL_{\bB}$, which satisfy 
\begin{equation}\label{PsiConstraintAt1}
\lambda\bB_{k,\lambda,\mu}(\phi_\star,\phi_\star)=1
\end{equation}
and
\begin{equation}\label{EqConstraintLb}
\int_{-1}^1 \rho_0' \phi_\star \theta dx_2- \beta_\star\lambda\bB_{k,\lambda,\mu}(\phi_\star, \theta) =0,
\end{equation}
for all $\theta \in \tilde H^2((-1,1))$. Restricting $\theta \in C_0^{\infty}((-1,1))$ and following the line of the proof of Proposition \ref{PropInverseOfR_Navier}, one deduces from \eqref{EqConstraintLb} that $\phi_\star$ has to satisfy 
\begin{equation}
\beta_\star \lambda Y_{k,\lambda,\mu}\phi_\star = \rho_0'\phi_\star
\end{equation}
in a weak sense. We further get that $ \phi_\star\in H^4((-1,1))$ and satisfies \eqref{PsiConstraintAt1} and the boundary conditions \eqref{NavierBound}.  Hence,    $\lambda\beta_\star $ is an eigenvalue of the compact and self-adjoint operator $ S_{k,\lambda,\mu}$ from $L^2((-1,1))$ to itself, with $\cM^{-1}Y_{k,\lambda,\mu}\phi_\star \in L^2((-1,1))$ being an associated eigenfunction. That implies 
\begin{equation}\label{BetaLeqAlpha}
\beta(k,\lambda,\mu) \leq \lambda^{-1}\gamma_1(k,\lambda,\mu).
\end{equation}

Meanwhile, since the operator $S_{k,\lambda,\mu}$ is  self-adjoint and positive, we thus obtain that
\[
\gamma_1(k,\lambda,\mu) =\sup_{\omega \in L^2((-1,1))} \frac{\langle S_{k,\lambda,\mu}\omega, \omega\rangle}{\|\omega\|_{L^2((-1,1))}^2}.
\]
Hence, for all $\omega\in L^2((-1,1))$ and  for $\phi= Y_{k,\lambda,\mu}^{-1}\cM\omega \in H^4((-1,1))$, we have
\[
\gamma_1(k,\lambda,\mu) \langle Y_{k,\lambda,\mu}\phi,\phi \rangle \leq  \frac{\langle S_{k,\lambda,\mu}\omega, \omega\rangle^2}{\|\omega\|_{L^2((-1,1))}^2} \leq  \| S_{k,\lambda,\mu}\omega\|_{L^2((-1,1))}^2.
\]
Equivalently, 
\[
\gamma_1(k,\lambda,\mu) \leq \sup \Big\{ \frac{\|\cM \phi\|_{L^2((-1,1))}^2}{\langle Y_{k,\lambda,\mu}\phi,\phi \rangle}|\phi \in H^4((-1,1)) \text{ and } \cM^{-1}Y_{k,\lambda,\mu}\phi \in L^2((-1,1))\Big\},
\]
it yields 
\begin{equation}\label{AlphaLeqBeta}
\lambda^{-1}\gamma_1(k,\lambda,\mu) \leq \beta(k,\lambda,\mu).
\end{equation}

Two inequalities \eqref{BetaLeqAlpha} and \eqref{AlphaLeqBeta} tell us that $\beta(k,\lambda,\mu)=\lambda^{-1}\gamma_1(k,\lambda,\mu)$ for all $\lambda>0$,  then the proof of Lemma \ref{PropLambda_1} is complete.
\end{proof}
\begin{lemma}\label{LemIneMaximalMode}
Let
\[
\mathsf{X} := \{\vec w\in (H^2(\Omega))^2, \vec w \text{ satisfies }   \eqref{EqNavierBoundary} \text{ and div}\vec w=0\}.
\]
There holds for all $\vec w\in H_{\star}^2(\Omega)$,
\begin{equation}\label{IneMaximalMode}
\begin{split}
\int_{\Omega}g\rho_0'|w_2|^2 d\vec x &+ \Lambda  \int_{(2\pi L \bT)^2} (\xi_+|  w_1(x_1,1)|^2 + \xi_- |w_1(x_1,-1)|^2) dx_1 \\
&\leq \Lambda^2 \int_{\Omega}\rho_0|\vec w|^2 d\vec x+ \Lambda\mu \int_{\Omega}|\nabla \vec w|^2 d\vec x.
\end{split}
\end{equation}
\end{lemma}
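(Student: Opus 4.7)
The plan is to reduce the two-dimensional inequality to a one-dimensional inequality at each horizontal Fourier frequency and then invoke Lemma \ref{PropLambda_1} together with a monotonicity argument. First I would decompose $\vec w$ in Fourier series in the periodic variable $x_1$. Since $\vec w$ is real-valued and divergence-free with $w_2(\cdot,\pm 1)=0$, for each $k\in L^{-1}\mathbb{Z}\setminus\{0\}$ the $k$-mode takes (up to a translation in $x_1$) the form $(w_1,w_2)=(\sin(kx_1)\theta_k(x_2),\cos(kx_1)\phi_k(x_2))$ with $\phi_k\in H^2((-1,1))$, $\phi_k(\pm 1)=0$ and $\theta_k=-\phi_k'/k$; the Navier--slip condition on $w_1$ translates exactly into the last two lines of \eqref{NavierBound} applied to $\phi_k$. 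By orthogonality of $\{\cos(kx_1),\sin(kx_1)\}$, each of the four global integrals in the claim splits into a sum over $k$ of one-dimensional integrals, and after matching the common $\pi L/k^2$ prefactors the mode-by-mode inequality to establish reads
\[
gk^2 \int_{-1}^1 \rho_0'\phi_k^2\,dx_2 + \Lambda\bigl(\xi_-\phi_k'(-1)^2 + \xi_+\phi_k'(1)^2\bigr) \leq \Lambda^2 \int_{-1}^1 \rho_0(k^2\phi_k^2+\phi_k'^2)\,dx_2 + \Lambda\mu \int_{-1}^1 \bigl(\phi_k''^2 + 2k^2\phi_k'^2 + k^4\phi_k^2\bigr)\,dx_2,
\]
which, by definition \eqref{BilinearForm}, is exactly $gk^2\int_{-1}^1\rho_0'\phi_k^2\,dx_2\leq \Lambda\,\bB_{k,\Lambda,\mu}(\phi_k,\phi_k)$.

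To establish this per-mode inequality I would combine Lemma \ref{PropLambda_1} with the monotonicity of $\lambda\mapsto \lambda\,\bB_{k,\lambda,\mu}(\phi,\phi)$ on $[0,\infty)$. Writing this map as $\lambda^2 A(\phi)+\lambda B(\phi)$ with $A(\phi)=\int_{-1}^1\rho_0(k^2\phi^2+\phi'^2)\,dx_2>0$ and $B(\phi)=\mu\int_{-1}^1(\phi''^2+2k^2\phi'^2+k^4\phi^2)\,dx_2-\xi_-\phi'(-1)^2-\xi_+\phi'(1)^2$, the characterization \eqref{MuC_kMax} together with the hypothesis $\mu>\mu_c(k,\Xi)$ gives $B(\phi)\geq (\mu-\mu_c(k,\Xi))\int(\phi''^2+\cdots)\,dx_2>0$, so the map is strictly increasing in $\lambda$. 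Since $\lambda_1(k,\mu)\leq\Lambda$ by \eqref{DefLambda}, this yields $\lambda_1(k,\mu)\,\bB_{k,\lambda_1(k,\mu),\mu}(\phi_k,\phi_k)\leq \Lambda\,\bB_{k,\Lambda,\mu}(\phi_k,\phi_k)$; combining with the variational bound $gk^2\int_{-1}^1\rho_0'\phi_k^2\,dx_2\leq \lambda_1(k,\mu)\,\bB_{k,\lambda_1(k,\mu),\mu}(\phi_k,\phi_k)$ supplied by Lemma \ref{PropLambda_1} then gives the desired per-mode estimate.

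I expect the main technical obstacle to be the treatment of the $k=0$ Fourier component of $\vec w$. The divergence-free constraint together with $w_2(\cdot,\pm 1)=0$ forces the $x_1$-average of $w_2$ to vanish identically, but the $x_1$-average $a_0(x_2):=\frac{1}{2\pi L}\int_0^{2\pi L}w_1(x_1,\cdot)\,dx_1$ of $w_1$ generally does not and satisfies only the Robin-type conditions $\mu a_0'(\pm 1)=\pm\xi_\pm a_0(\pm 1)$. Since Lemma \ref{PropLambda_1} only handles $k\neq 0$, the contribution $\xi_+ a_0(1)^2+\xi_- a_0(-1)^2\leq \Lambda\int_{-1}^1\rho_0 a_0^2\,dx_2+\mu\int_{-1}^1 a_0'^2\,dx_2$ must be extracted separately; I expect this to follow from a one-dimensional trace estimate in which the standing hypothesis $\mu>3\mu_c(\Xi)\geq \mu_c^s(\Xi)$, together with the Robin conditions, provides enough room to absorb the boundary terms into $\mu\int a_0'^2\,dx_2$, and the remaining gap is covered by $\Lambda\int\rho_0 a_0^2\,dx_2$.
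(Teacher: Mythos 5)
Your treatment of the nonzero horizontal frequencies is correct and is exactly the route the paper intends (the proof is omitted there, attributed to the definition \eqref{DefLambda}, Lemma \ref{PropLambda_1} and \cite[Lemma 5.1]{DJL20}): since $\mu>3\mu_c(\Xi)\geq\mu_c(k,\Xi)$ for every $k\in L^{-1}\mathbb{Z}\setminus\{0\}$, the characterization \eqref{MuC_kMax} makes $\lambda\mapsto\lambda\bB_{k,\lambda,\mu}(\phi,\phi)=\lambda^2A(\phi)+\lambda B(\phi)$ increasing on $[0,\infty)$, and combining Lemma \ref{PropLambda_1} with $\lambda_1(k,\mu)\leq\Lambda$ yields the per-mode bound $gk^2\int_{-1}^1\rho_0'|\phi_k|^2dx_2\leq\Lambda\bB_{k,\Lambda,\mu}(\phi_k,\phi_k)$, which is what the Parseval bookkeeping requires. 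One small repair: a single real frequency cannot in general be normalized to the pure $(\sin(kx_1)\theta_k,\cos(kx_1)\phi_k)$ form by a translation, because the phase of the Fourier coefficient may depend on $x_2$; one should work with the complex coefficients $\hat w_1(k,\cdot),\hat w_2(k,\cdot)$ and apply the quadratic per-mode inequality to their real and imaginary parts, as in the proof of Lemma \ref{LemEstMu_c}.

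The genuine gap is the zero mode, which you flag but do not close, and the route you sketch cannot close it. What is needed is $\xi_+|a_0(1)|^2+\xi_-|a_0(-1)|^2\leq\Lambda\int_{-1}^1\rho_0|a_0|^2dx_2+\mu\int_{-1}^1|a_0'|^2dx_2$ for every $a_0$ obeying the Robin conditions $\mu a_0'(\pm1)=\pm\xi_\pm a_0(\pm1)$, and no trace/absorption argument can deliver it, because $\Lambda$ admits no lower bound in terms of $\xi_\pm$ and $\rho_0$: by Lemma \ref{PropLambda_1} and the coercivity bound \eqref{LowerBoundB_2}, $\lambda_1(k,\mu)\leq g\|\rho_0'\|_\infty L^2/(\mu-\mu_c(\Xi))$ for all admissible $k$, so $\Lambda\to0$ as $\mu\to\infty$, while the boundary terms remain of size $\xi_\pm$. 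Concretely, for $\xi_+=\xi_-=\xi>0$ take $a_0(x_2)=\cosh(\alpha x_2)$ with $\mu\alpha\tanh\alpha=\xi$; then $(a_0,0)\in\mathsf{X}$, and after dividing by $2\pi L\Lambda$ the asserted inequality \eqref{IneMaximalMode} reduces to
\[
\frac{\mu\alpha\sinh(2\alpha)}{2}+\mu\alpha^2\ \leq\ \Lambda\int_{-1}^1\rho_0\cosh^2(\alpha x_2)\,dx_2,
\]
whose left-hand side tends to $2\xi$ and whose right-hand side tends to $0$ as $\mu\to\infty$, even though the standing hypothesis $\mu>3\mu_c(\Xi)$ holds. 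So the zero-mode step is not a removable technicality: the estimate must either be restricted to fields whose first component has zero horizontal mean (which cannot be assumed for $\vec u^d$ in the application) or the statement must be modified, and neither your trace-estimate sketch nor the omitted proof borrowed from \cite{DJL20} resolves this. A side remark: your claim $3\mu_c(\Xi)\geq\mu_c^s(\Xi)$ is also unjustified, since $\mu_c(\Xi)=\mu_c(1/L,\Xi)$ and, by \eqref{UpperBoundMuK}, this can be arbitrarily small compared with $\mu_c^s(\Xi)$ when $L$ is small.
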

The proof of Lemma \ref{LemIneMaximalMode}  is  due to the definition of $\Lambda$ \eqref{DefLambda} and Lemma \ref{PropLambda_1}, that is similar to \cite[Lemma 5.1]{DJL20}, hence we omit the details here.
\begin{lemma}\label{LemEstMu_c}
There holds for all $\vec w\in\mathsf{X}  \setminus \{\vec 0\}$,
\begin{equation}\label{EstMuC}
 \sup_{\vec w \in \mathsf{X}} \frac{\int_{2\pi L\bT} ( \xi_+ |w_1(x_1,1)|^2 +\xi_- |w_1(x_1,-1)|^2)  dx_1}{\|\nabla \vec w\|_{L^2(\Omega)}^2} \leq \mu_c(\Xi).
\end{equation}
\end{lemma}
\begin{proof}
Let us fix a horizontal frequency $k\in L^{-1}\mathbb{Z}$ and introduce the horizontal Fourier transform 
\[
\hat f(k,x_2)= \int_{2\pi L\bT}f(\vec x)e^{-i kx_1} dx_1.
\]
For $\vec w\in \mathsf{X}$, we write 
\[
\hat w_1(k,x_2)= -i\theta(k,x_2), \quad \hat w_2(k,x_2) = \phi(k,x_2).
\]
Then, $k\theta+\phi'=0$ and $(\theta,\phi)$ enjoy \eqref{BoundEigenfunctions}. Following Fubini's and Parseval's theorem, one thus deduces 
\begin{equation}\label{IntegralW_1Fourier}
\begin{split}
\int_{2\pi L\bT} ( \xi_+ |w_1(x_1,1)|^2 &+\xi_- |w_1(x_1,-1)|^2)  dx_1\\
& = \frac1{2\pi L}  \sum_{k\in L^{-1}\mathbb{Z}}  (\xi_+(|\theta(k,1)|^2+\xi_- |\theta(k,-1)|^2)
\end{split}
\end{equation}
and 
\begin{equation}\label{IntegralNablaW_Fourier}
\begin{split}
{\|\nabla \vec w\|_{L^2(\Omega)}^2}&= \frac1{2\pi L} \sum_{k\in L^{-1}\mathbb{Z}} \int_{-1}^1( k^2(|\theta|^2+|\phi|^2)+ |\theta'|^2+|\phi'|^2)(k,x_2) dx_2.
\end{split}
\end{equation}
We may reduce to estimate \eqref{EstMuC} when $\theta$ and $\phi$ are real-valued and  continue the estimate to the real and imaginary part of $\theta$ and $\phi$.
For any $k\in L^{-1}\mathbb{Z}\setminus\{0\}$, we  have from $k\theta+\phi'=0$ that 
\begin{equation}\label{EqThetaPsi'}
\xi_+(\theta(k,1))^2+\xi_- (\theta(k,-1))^2 = \frac1{k^2}(\xi_+((\phi'(k,1))^2+\xi_- (\phi'(k,-1))^2)
\end{equation}
and that
\begin{equation}\label{EqThetaPsiDeri}
\begin{split}
&\int_{-1}^1 \Big( k^2(\theta^2+\phi^2)+ (\theta')^2+(\phi')^2\Big)(k,x_2) 
dx_2 \\
&\qquad= \frac1{k^2} \int_{-1}^1 (k^4\phi^2+2k^2 (\phi')^2+(\phi'')^2)(k,x_2)dx_2.
\end{split}
\end{equation}
Owing to \eqref{IntegralW_1Fourier}, \eqref{EqThetaPsi'} and the definition of $\mu_c(k,\Xi)$ (see \eqref{MuC_kMax}), we get 
\[
\begin{split}
&\int_{2\pi L\bT} ( \xi_+ |w_1(x_1,1)|^2 +\xi_- |w_1(x_1,-1)|^2)  dx_1 \\
&\qquad\leq \frac1{2\pi L} \left(\begin{split} 
&\limsup_{k\to 0} \frac1{k^2} (\xi_+(\phi'(k,1))^2+\xi_- (\phi'(k,-1))^2)\\
&+ \sum_{k\in L^{-1}\mathbb{Z}\setminus\{0\}} \frac1{k^2} (\xi_+ (\phi'(k,1))^2+\xi_- (\phi'( k,-1))^2) \end{split}\right)\\
&\qquad\leq \frac1{2\pi L} \left(\begin{split} 
&\limsup_{k\to 0} \frac{\mu_c(k,\Xi)}{k^2}\int_{-1}^1 (k^4\phi^2+2k^2 \phi'^2+\phi''^2)( k,x_2)dx_2 \\
&+ \sum_{k\in L^{-1}\mathbb{Z}\setminus\{0\}} \frac{\mu_c(k,\Xi)}{k^2} \int_{-1}^1 (k^4\phi^2+2k^2 \phi'^2+\phi''^2)(k,x_2)dx_2
\end{split}\right).
\end{split}
\]
Thanks to Proposition \ref{PropMuC}, we obtain
\begin{equation}\label{1_LemEstMu_c}
\begin{split}
&\int_{2\pi L\bT} ( \xi_+ |w_1(x_1,1)|^2 +\xi_- |w_1(x_1,-1)|^2)  dx_1 \\
&\leq  \frac{\mu_c(\Xi)}{2\pi L} \left(\begin{split} 
&\limsup_{k\to 0} \frac1{k^2}\int_{-1}^1 (k^4 \phi^2+2k^2 (\phi')^2+(\phi'')^2)(k,x_2)dx_2 \\
&+ \sum_{k\in L^{-1}\mathbb{Z}\setminus\{0\}} \frac1{k^2} \int_{-1}^1 (k^4 \phi^2+2k^2 (\phi')^2+(\phi'')^2)(k,x_2)dx_2
\end{split}\right).
\end{split}
\end{equation}
Combining   \eqref{IntegralNablaW_Fourier}, \eqref{EqThetaPsiDeri} and \eqref{1_LemEstMu_c}, it gives
\[
\int_{2\pi L\bT} ( \xi_+ |w_1(x_1,1)|^2 +\xi_- |w_1(x_1,-1)|^2)  dx_1 \leq \mu_c(\Xi) \|\nabla \vec w\|_{L^2(\Omega)}^2.
\]
Lemma \ref{LemEstMu_c} is proven. 
\end{proof}

We now prove Proposition \ref{PropL2_NormU^d}.
\begin{proof}[Proof of Proposition \ref{PropL2_NormU^d}]
We rewrite $\eqref{EqDiff}_2$ as
\[
(\rho_0+\sigma^{\delta}) \partial_t \vec u^d-\mu \Delta \vec u^d + \nabla q^d= \vec f^\delta - g\sigma^d \vec e_2,
\]
where $\vec f^{\delta}= -\sigma^\delta \partial_t\vec u^M- (\rho_0 +\sigma^{\delta}) \vec u^{\delta} \cdot \nabla \vec u^{\delta}$. Differentiate the resulting equation with respect to $t$ and then multiply by $\partial_t\vec u^d$, we obtain after integration that 
\[\begin{split}
 &\int_{\Omega}\partial_t \sigma^{\delta}|\partial_t \vec u^d|^2 d\vec x+ \int_{\Omega} (\rho_0 +\sigma^{\delta}) \partial_t^2 \vec u^d \cdot \partial_t \vec u^d d\vec x \\
 &=  \int_{\Omega} \mu \Delta\partial_t \vec u^d \cdot \partial_t \vec u^d d\vec x- \int_{\Omega} \nabla \partial_t q^d \cdot \partial_t \vec u^d d\vec x +  \int_{\Omega}(\partial_t \vec  f^{\delta} -g\partial_t\sigma^d\vec e_2) \cdot\partial_t \vec u^d d\vec x.
\end{split}\]
Since $\text{div}\partial_t \vec u^d=0$, we use the integration by parts to further obtain 
\[\begin{split}
&\int_{\Omega}\partial_t\sigma^{\delta}(t)|\partial_t \vec u^d(t)|^2 d\vec x+ \int_{\Omega} (\rho_0 +\sigma^{\delta}(t)) \partial_t^2 \vec u^d(t) \cdot \partial_t \vec u^d(t) d\vec x \\
&=  \int_{\Omega}(\partial_t \vec f^{\delta}(t)-g\partial_t\sigma^d(t) \vec e_2)\cdot \partial_t \vec u^d(t)d\vec x - \mu\int_{\Omega}|\nabla \partial_t \vec u^d(t)|^2d\vec x \\
&\qquad\qquad+ \int_{2\pi L\bT} ( \xi_+ |\partial_t u_1^d(t,x_1,1)|^2 +\xi_- |\partial_t u_1^d(t, x_1,-1)|^2)  dx_1.
\end{split}\]
That means, 
\[\begin{split}
&\frac12\frac{d}{dt} \int_{\Omega}(\rho_0 +\sigma^{\delta}(t))|\partial_t \vec u^d(t)|^2 d\vec x \\
&=  -\frac12 \int_{\Omega}\sigma_t^{\delta}|\partial_t \vec u^d|^2 d\vec x + \int_{\Omega}(\partial_t \vec f^{\delta}(t)-g\partial_t \sigma^d(t) \vec e_2)\cdot \partial_t \vec u^d(t)d\vec x  - \mu\int_{\Omega}|\nabla \partial_t \vec u^d(t)|^2d\vec x \\
&\qquad+ \int_{2\pi L\bT} ( \xi_+ |\partial_t u_1^d(t,x_1,1)|^2 +\xi_- |\partial_t u_1^d(t, x_1,-1)|^2)  dx_1.
\end{split}\]
Using $\eqref{EqDiff}_1$, we then get 
\[
\begin{split}
&\frac{d}{dt} \int_{\Omega}\Big( (\rho_0+\sigma^{\delta}(t))|\partial_t \vec u^d(t)|^2-g\rho_0'|u_2^d(t)|^2\Big) d\vec x \\
&\qquad + 2  \mu\int_{\Omega}|\nabla \partial_t \vec u^d(t)|^2d\vec x  -2  \int_{2\pi L\bT} ( \xi_+ |\partial_t u_1^d(t,x_1,1)|^2 +\xi_- |\partial_t u_1^d(t, x_1,-1)|^2)  dx_1\\
&=-\int_{\Omega}\partial_t \sigma^{\delta}|\partial_t \vec u^d|^2 d\vec x+ 2\int_{\Omega}(\partial_t \vec f^{\delta}(t)+g \vec u^{\delta}(t) \cdot \nabla \sigma^{\delta}(t) \vec e_2)\cdot \partial_t \vec u^d(t) d\vec x. 
\end{split}
\]
Integrating in time variable, we get 
\begin{equation}\label{EstU_t^d_L^2}
\begin{split}
&\|\sqrt{\rho_0+\sigma^{\delta}(t)} \partial_t \vec u^d(t)\|_{L^2(\Omega)}^2 + 2 \mu\int_0^t \|\nabla \partial_t \vec u^d(s)\|_{L^2(\Omega)}^2  ds \\
&\qquad-2\int_0^t \int_{2\pi L\bT} ( \xi_+ |u_1^d(s,x_1,1)|^2 +\xi_- |u_1^d(s, x_1,-1)|^2)  dx_1ds\\
&= \int_{\Omega} g\rho_0' |u_2^d(t)|^2 d\vec x +\Big( \int_{\Omega}(\rho_0+\sigma^{\delta}(t))|\partial_t \vec u^d(t)|^2 d\vec x\Big)\Big|_{t=0}\\
&\qquad+ \int_0^t \int_{\Omega}(2\partial_t f^{\delta}(s)+2g \vec u^{\delta}(s)\cdot \nabla \sigma^{\delta}(s) \vec e_2 - \partial_t \sigma^{\delta}(s) \partial_t \vec u^d(s))\cdot \partial_t \vec u^d(s) ds.
\end{split}
\end{equation}
We continue using \eqref{H^sNormU^mU^d}, \eqref{H^sNormSigma^d} and \eqref{NormU^dL^2At0} to estimate each term of the r.h.s of \eqref{EstU_t^d_L^2}. This yields
\begin{equation}\label{2ndEstU_t^d_L^2}
\begin{split}
&\|\sqrt{\rho_0+\sigma^{\delta}(t)} \partial_t \vec u^d(t)\|_{L^2(\Omega)}^2 + 2 \mu\int_0^t \|\partial_t \vec u^d(s)\|_{L^2(\Omega)}^2  ds\\
&\qquad\qquad -2 \int_{2\pi L\bT} ( \xi_+ |\partial_t u_1^d(t,x_1,1)|^2 +\xi_- |\partial_t u_1^d(t, x_1,-1)|^2)  dx_1\\
&\leq \int_{\Omega} g\rho_0' |u_2^d(t)|^2 d\vec x + C_{16}\delta^3 F_M^3(t).
\end{split}
\end{equation}
Due to \eqref{IneMaximalMode}, we further get that 
\begin{equation}\label{2ndEstU_t^d_L^2}
\begin{split}
&\|\sqrt{\rho_0+\sigma^{\delta}(t)} \partial_t \vec u^d(t)\|_{L^2(\Omega)}^2 + 2 \mu\int_0^t \|\nabla \partial_t \vec u^d(s)\|_{L^2(\Omega)}^2  ds \\
&\qquad\qquad -2 \int_0^t \int_{2\pi L\bT} ( \xi_+ |\partial_t u_1^d(s,x_1,1)|^2 +\xi_- |\partial_t u_1^d(s, x_1,-1)|^2)  dx_1ds\\
&\leq \Lambda^2 \int_{\Omega} \rho_0 |\vec u^d(t)|^2 d\vec x+ \Lambda\mu \int_{\Omega}|\nabla \vec u^d(t)|^2 d\vec x \\
&\qquad\qquad -\Lambda \int_{2\pi L\bT} ( \xi_+ |u_1^d(t,x_1,1)|^2 +\xi_- |u_1^d(t, x_1,-1)|^2)  dx_1+ C_{16}\delta^3F_M^3(t) \\
&\leq  \Lambda^2 \int_{\Omega}(\rho_0+\sigma^{\delta}(t))|\vec u^d(t)|^2 d\vec x+ \Lambda\mu \int_{\Omega}|\nabla \vec u^d(t)|^2 d\vec x \\
&\qquad\qquad -\Lambda \int_{2\pi L\bT} ( \xi_+ |u_1^d(t,x_1,1)|^2 +\xi_- |u_1^d(t, x_1,-1)|^2)  dx_1+ C_{17}\delta^3F_M^3(t).
\end{split}
\end{equation}

On the other hand, we have
\[\begin{split}
&\frac{d}{dt}\|\sqrt{\rho_0 +\sigma^{\delta}(t)}\vec u^d(t)\|_{L^2(\Omega)}^2= 2\int_{\Omega} (\rho_0+\sigma^{\delta}(t)) \vec u^d(t) \cdot \partial_t \vec u^d(t) d\vec x + \int_{\Omega} \partial_t \sigma^{\delta}(t)|\vec u^d(t)|^2 d\vec x. 
\end{split}\]
Let us recall $\varpi_0$ from \eqref{Nu_0} and $ \nu_0 = \frac{3+\varpi_0}{2+\varpi_0} \in (1,\frac32)$. We fix two  positive constants $m_{1,2}$  such that
\begin{equation}\label{M1_constant}
m_1= \nu_0+\sqrt{\nu_0^2-1}
\end{equation}
and that 
\begin{equation}\label{M2_constant}
m_2 = \mu(m_1^2-m_1+1)-\mu_c(\Xi)(m_1^2+1) + \sqrt{(\mu(m_1^2-m_1+1)-\mu_c(\Xi)(m_1^2+1))^2-\mu^2m_1^2}.
\end{equation}
With $m_1>0$ from \eqref{M1_constant}, we use Young's inequality to observe
\[ \begin{split}
&2\int_{\Omega} (\rho_0+\sigma^{\delta}(t)) \vec u^d(t) \cdot \partial_t \vec u^d(t) d\vec x \\
&\leq \frac1{\Lambda m_1} \|\sqrt{\rho_0+\sigma^{\delta}(t)}\partial_t \vec u^d(t)\|_{L^2(\Omega)}^2 +\Lambda m_1\|\sqrt{\rho_0+\sigma^{\delta}(t)}\vec u^d(t)\|_{L^2(\Omega)}^2. 
\end{split}\]
That will imply 
\begin{equation}\label{IneDtU^d}
\begin{split}
\frac{d}{dt}\|\sqrt{\rho_0 +\sigma^{\delta}(t)}\vec u^d(t)\|_{L^2(\Omega)}^2 &\leq  \frac1{\Lambda m_1} \|\sqrt{\rho_0+\sigma^{\delta}(t)}\partial_t \vec u^d(t)\|_{L^2(\Omega)}^2 \\
&\qquad+\Lambda m_1\|\sqrt{\rho_0+\sigma^{\delta}(t)}\vec u^d(t)\|_{L^2(\Omega)}^2 + C_{18}\delta^3 F_M^3(t).
\end{split}
\end{equation}
With $m_2>0$ defined as in \eqref{M2_constant}, we  obtain from \eqref{2ndEstU_t^d_L^2} and \eqref{IneDtU^d} that
\[
\begin{split}
&\frac{d}{dt}\|\sqrt{\rho_0 +\sigma^{\delta}(t)}\vec u^d(t)\|_{L^2(\Omega)}^2 + m_2\|\nabla \vec u^d(t)\|_{L^2(\Omega)}^2 \\
&\leq \Big(m_1+ \frac1{m_1}\Big)\Lambda\|\sqrt{\rho_0 +\sigma^{\delta}(t)}\vec u^d(t)\|_{L^2(\Omega)}^2 + \Big( \frac{\mu}{m_1}+m_2\Big) \|\nabla\vec u^d\|_{L^2(\Omega)}^2\\
&\qquad +\frac2{\Lambda m_1} \int_0^t \int_{2\pi L\bT} ( \xi_+ |\partial_t u_1^d(s,x_1,1)|^2 +\xi_- |\partial_t u_1^d(s, x_1,-1)|^2)  dx_1ds\\
&\qquad - \frac{2\mu}{\Lambda m_1} \int_0^t\|\nabla\partial_t \vec u^d(s)\|_{L^2(\Omega)}^2 ds + C_{19} \delta^3F_M^3(t).
\end{split}
\]
Together with \eqref{EstMuC}, we deduce
\begin{equation}\label{IneEnergyForm1st}
\begin{split}
&\frac{d}{dt}\|\sqrt{\rho_0 +\sigma^{\delta}(t)}\vec u^d(t)\|_{L^2(\Omega)}^2 + m_2\|\nabla \vec u^d(t)\|_{L^2(\Omega)}^2 \\
&\leq \Big(m_1+ \frac1{m_1}\Big)\Lambda\|\sqrt{\rho_0 +\sigma^{\delta}(t)}\vec u^d(t)\|_{L^2(\Omega)}^2   + \Big( \frac{\mu}{m_1}+m_2\Big)  \|\nabla\vec u^d(t)\|_{L^2(\Omega)}^2\\
&\qquad\qquad- \frac{2(\mu-\mu_c(\Xi))}{\Lambda m_1} \int_0^t\|\nabla\partial_t \vec u^d(s)\|_{L^2(\Omega)}^2 ds + C_{19} \delta^3F_M^3(t).
\end{split}
\end{equation}
We use Young's inequality to get that 
\begin{equation}\label{IneNablaU^dYoung}
\begin{split}
\Big( \frac{\mu}{m_1}+m_2\Big)  \|\nabla\vec u^d(t)\|_{L^2(\Omega)}^2&= 2\Big( \frac{\mu}{m_1}+m_2\Big) \int_0^t \int_{\Omega}\nabla\vec u^d(s)\cdot \nabla\partial_t \vec u^d(s) d\vec xds\\
&\leq \frac{2(\mu-\mu_c(\Xi))}{\Lambda m_1} \int_0^t\|\nabla\partial_t \vec u^d(s)\|_{L^2(\Omega)}^2 ds  \\
&\qquad+\frac{\Lambda m_1 \Big( \frac{\mu}{m_1}+m_2\Big)^2}{2(\mu-\mu_c(\Xi))} \int_0^t\|\nabla\vec u^d(s)\|_{L^2(\Omega)}^2ds.
\end{split}
\end{equation}
Combining \eqref{IneEnergyForm1st} and \eqref{IneNablaU^dYoung} gives us 
\begin{equation}\label{Ine2ndBeforeGronwall}
\begin{split}
&\frac{d}{dt}\|\sqrt{\rho_0 +\sigma^{\delta}(t)}\vec u^d(t)\|_{L^2(\Omega)}^2 + m_2\|\nabla \vec u^d(t)\|_{L^2(\Omega)}^2 \\
&\leq  \Big(m_1+ \frac1{m_1}\Big)\Lambda\|\sqrt{\rho_0 +\sigma^{\delta}(t)}\vec u^d(t)\|_{L^2(\Omega)}^2  \\
&\qquad\quad+\frac{\Lambda m_1 \Big( \frac{\mu}{m_1}+m_2\Big)^2}{2(\mu-\mu_c(\Xi))} \int_0^t\|\nabla\vec u^d(s)\|_{L^2(\Omega)}^2ds +C_{19}\delta^3 F_M^3(t).
\end{split}
\end{equation}
It follows from \eqref{M1_constant} and \eqref{M2_constant} that
\[
\frac{\Lambda m_1\Big( \frac{\mu}{m_1}+m_2\Big)^2}{2(\mu-\mu_c(\Xi))}= \Lambda\Big(m_1+ \frac1{m_1}\Big) m_2=2\nu_0 \Lambda m_2.
\]
Therefore, \eqref{Ine2ndBeforeGronwall} becomes
\begin{equation}\label{IneBeforeGronwall}
\begin{split}
&\frac{d}{dt}\|\sqrt{\rho_0 +\sigma^{\delta}(t)}\vec u^d(t)\|_{L^2(\Omega)}^2 + m_2\|\nabla \vec u^d(t)\|_{L^2(\Omega)}^2 \\
&\leq  2\nu_0 \Lambda \Big(\|\sqrt{\rho_0 +\sigma^{\delta}(t)}\vec u^d(t)\|_{L^2(\Omega)}^2 + m_2 \int_0^t\|\nabla\vec u^d(s)\|_{L^2(\Omega)}^2ds \Big)+C_{19}\delta^3 F_M^3(t).
\end{split}
\end{equation}
Recalling that $\vec u^d(0)= \vec 0$, thus, applying Gronwall’s inequality to \eqref{IneBeforeGronwall}, one obtains
\begin{equation}\label{IneAfterGronwall}
\|\sqrt{\rho_0+\sigma^{\delta}(t)} \vec u^d(t)\|_{L^2(\Omega)}^2 + m_2 \int_0^t \|\nabla \vec u^d(s)\|_{L^2(\Omega)}^2 ds \leq C_{19}\delta^3 e^{2\nu_0 \Lambda t} \int_0^t e^{-2\nu_0\Lambda s}F_M^3(s) ds.
\end{equation}
Since $F_M^3(t)\leq M^2 \max_{j_m\leq j\leq M}|\Csf_j|^2F_M(3t)$, we then have  from \eqref{IneAfterGronwall} that
\begin{equation}\label{BoundUdFinal}
\begin{split}
\|\vec u^d(t)\|_{L^2(\Omega)}^2 &\leq C_{20} \delta^3 e^{2\nu_0 \Lambda t}  \sum_{j=j_m}^M \int_0^t |\Csf_j| e^{(3\lambda_j -2\nu_0\Lambda)s} ds.
\end{split}
\end{equation}
Because of \eqref{AssumeLambdaN}, we have $\lambda_j > \frac23 \nu_0\Lambda$ for $j_m\leq j\leq N$ and $\lambda_j <\frac23 \nu_0\Lambda$ for $j\geq N+1$. It yields that for $j_m \leq j\leq N$, 
\begin{equation}\label{IneIntLambda_j1N}
\int_0^t e^{(3\lambda_j-2\nu_0 \Lambda)s} ds = \frac1{3\lambda_j-2\nu_0\Lambda} (e^{(3\lambda_j-2\nu_0\Lambda)t}-1) \leq  \frac1{3\lambda_j-2\nu_0\Lambda} e^{(3\lambda_j-2\nu_0\Lambda)t}
\end{equation}
and that for $j\geq N+1$, 
\begin{equation}\label{IneIntLambda_jN+1}
\int_0^t e^{(3\lambda_j-2\nu_0 \Lambda)s} ds = \frac1{3\lambda_j-2\nu_0\Lambda} (e^{(3\lambda_j-2\nu_0\Lambda)t}-1)  \leq \frac1{2\nu_0\Lambda-3\lambda_j}.
\end{equation}
In view of \eqref{IneIntLambda_j1N} and \eqref{IneIntLambda_jN+1},  we obtain from \eqref{BoundUdFinal} that if $M\leq N$,
\[
\begin{split}
\|\vec u^d(t)\|_{L^2(\Omega)}^2 &\leq C_{20}  \delta^3 \Big( \sum_{j=j_m}^M \frac{|\Csf_j|}{3\lambda_j-2\nu_0\Lambda} e^{3\lambda_j t} \Big)
\end{split}
\]
 and if $M\geq N+1$, 
\[
\begin{split}
\|\vec u^d(t)\|_{L^2(\Omega)}^2 &\leq C_{20}  \delta^3 \Big( \sum_{j=j_m}^M \frac{|\Csf_j|}{3\lambda_j-2\nu_0\Lambda } e^{3\lambda_j t} + \sum_{j=N+1}^M  \frac{|\Csf_j|}{2\nu_0\Lambda-3\lambda_j} e^{2\nu_0\Lambda t} \Big).
\end{split}
\]
That means 
\begin{equation}\label{EstU^dGeq}
\|\vec u^d(t)\|_{L^2(\Omega)}^2 \leq C_{21} \delta^3 \Big(\sum_{j=j_m}^N |\Csf_j| e^{\lambda_j t}+ \max(0,M-N) \Big(\max_{N+1\leq j\leq M}|\Csf_j|\Big) e^{\frac23 \nu_0\Lambda t}\Big)^3.
\end{equation}

To show the bound of $\|\sigma^d(t)\|_{L^2(\Omega)}$, we use Cauchy-Schwarz's inequality to deduce from $\eqref{EqDiff}_1$ that 
\[
\frac{d}{dt}\|\sigma^d(t)\|_{L^2(\Omega)} \leq \|\sigma^d(t)\|_{L^2(\Omega)} \leq (\max \rho_0' ) \|u_2^d(t)\|_{L^2(\Omega)} + \|\vec u^\delta(t)\|_{L^2(\Omega)} \|\sigma^\delta(t)\|_{H^1(\Omega)}. 
\]
Using \eqref{EnergyNormDelta}, we obtain 
\[
\frac{d}{dt}\|\sigma^d(t)\|_{L^2(\Omega)} \leq  C_{22}(\|u_2^d(t)\|_{L^2(\Omega)}  + \delta^2F_M^2(t)). 
\]
Note that $\sigma^d(0)=0$,  integrating the resulting inequality in time, we have
\[
\begin{split}
\|\sigma^d (t)\|_{L^2(\Omega)} 
&\leq C_{22}\int_0^t (\|u_2^d(s)\|_{L^2(\Omega)}  + \delta^2F_M^2(s))ds.
\end{split}
\]
Together with \eqref{EstU^dGeq}, we have
\begin{equation}\label{EstSigma^dGeq}
\|\sigma^d (t)\|_{L^2(\Omega)}^2 \leq C_{23}\delta^3 \Big(\sum_{j=j_m}^N |\Csf_j| e^{\lambda_j t}+ \max(0,M-N) \Big(\max_{N+1\leq j\leq M}|\Csf_j|\Big) e^{\frac23 \nu_0\Lambda t}\Big)^3.
\end{equation}
The inequality \eqref{L2_NormU^d} follows from \eqref{EstU^dGeq} and \eqref{EstSigma^dGeq}. Proof of Proposition \ref{PropL2_NormU^d} is complete. 
\end{proof}

\subsection{Proof of Theorem \ref{ThmNonlinear}}
Note that 
\begin{equation}
\begin{split}
\|\vec u^M(t)\|_{L^2(\Omega)}^2 &=   \sum_{i=j_m}^M\Csf_i^2 e^{2\lambda_i t} \|\vec u_i\|_{L^2(\Omega)}^2 +  2 \sum_{j_m\leq i<j\leq M} \Csf_i \Csf_j e^{(\lambda_i+\lambda_j)t} \int_{\Omega} \vec u_i(\vec x) \cdot \vec u_j(\vec x) d\vec x.
\end{split}
\end{equation}
It can be seen that 
\[
\begin{split}
\|\vec u^M(t)\|_{L^2(\Omega)}^2 \geq  &\sum_{j=j_m}^M \Csf_j^2 e^{2\lambda_j t}\|\vec u_j\|_{L^2(\Omega)}^2 + 2\sum_{j_m+1\leq i<j\leq M} \Csf_i\Csf_j e^{(\lambda_i+\lambda_j)t}   \int_{\Omega} \vec u_i(\vec x) \cdot \vec u_j(\vec x) d\vec x\\
&\qquad-|\Csf_{j_m}|\|\vec u_{j_m}\|_{L^2(\Omega)} \Big(\sum_{j=j_m+1}^M|\Csf_j|  \|\vec u_j\|_{L^2(\Omega)}\Big) e^{(\lambda_{j_m}+\lambda_{j_m+1})t}.
\end{split}
\]
By Cauchy-Schwarz's inequality, we obtain 
\[
\begin{split}
2\sum_{j_m+1 \leq i<j\leq M} &\Csf_i\Csf_j e^{(\lambda_i+\lambda_j)t}   \int_{\Omega} \vec u_i(\vec x) \cdot \vec u_j(\vec x) d\vec x \\
&\geq- \sum_{j_m+1 \leq i<j\leq M} |\Csf_i||\Csf_j| e^{(\lambda_i+\lambda_j)t} \|\vec u_i\|_{L^2(\Omega)}\|\vec u_j\|_{L^2(\Omega)}\\
&\geq - e^{(\lambda_{j_m+1}+\lambda_{j_m+2})t} \Big(\sum_{j=j_m+1}^M |\Csf_j|\|\vec u_j\|_{L^2(\Omega)}\Big)^2.
\end{split}
\]
This yields
\[
\begin{split}
\|\vec u^M(t)\|_{L^2(\Omega)}^2 &\geq \sum_{j=j_m}^M \Csf_j^2 e^{2\lambda_j t}\|\vec u_j\|_{L^2(\Omega)}^2 -e^{(\lambda_{j_m+1}+\lambda_{j_m+2})t} \Big(\sum_{j=j_m+1}^M |\Csf_j| \|\vec u_j\|_{L^2(\Omega)}\Big)^2 \\
&\qquad - |\Csf_{j_m}| e^{(\lambda_{j_m}+\lambda_{j_m+1})t}\|\vec u_{j_m}\|_{L^2(\Omega)} \Big(\sum_{j=j_m+1}^M|\Csf_j|  \|\vec u_j\|_{L^2(\Omega)}.
\end{split}
\]
Due to the assumption \eqref{NormalizedCond_Navier_2}, we deduce that
\[
\begin{split}
\|\vec u^M(t)\|_{L^2(\Omega)}^2 &\geq \sum_{j=j_m}^M \Csf_j^2 e^{2\lambda_j t}\|\vec u_j\|_{L^2(\Omega)}^2 -\frac12 \Csf_{j_m}^2 e^{(\lambda_{j_m}+\lambda_{j_m+1})t} \|\vec u_{j_m}\|_{L^2(\Omega)}^2  \\
&\qquad - \frac14 \Csf_{j_m}^2 e^{(\lambda_{j_m+1}+\lambda_{j_m+2})t} \|\vec u_{j_m}\|_{L^2(\Omega)}^2.
\end{split}
\]
This yields 
\[\begin{split}
\|\vec u^M(t)\|_{L^2(\Omega)}^2 &\geq \Csf_{j_m}^2\Big(e^{2\lambda_{j_m} t}-\frac12 e^{(\lambda_{j_m}+\lambda_{j_m+1})t} -  \frac14 e^{(\lambda_{j_m+1}+\lambda_{j_m+2})t}\Big) \|\vec u_{j_m}\|_{L^2(\Omega)}^2  \\
&\qquad\qquad +\sum_{j=j_m+1}^M \Csf_j^2 e^{2\lambda_j t}\|\vec u_j\|_{L^2(\Omega)}^2.
\end{split}
\]
Notice that for all $t\geq 0$,
\[
e^{2\lambda_{j_m} t}-\frac12 e^{(\lambda_{j_m}+\lambda_{j_m+1})t} -  \frac14 e^{(\lambda_{j_m+1}+\lambda_{j_m+2})t} \geq \frac14 e^{2\lambda_{j_m} t}.
\]
Hence, we have 
\begin{equation}\label{L^2NormU_2^M}
\|\vec u^M(t)\|_{L^2(\Omega)} \geq C_{24} F_M(t),
\end{equation}
for all $t\leq \min(T^\delta, T^\star, T^{\star\star})$.

Let 
\[
\tilde \Csf(M) = \max_{N+1\leq j\leq M}\frac{|\Csf_j|}{|\Csf_{j_m}|}\geq 0.
\]
We recall the definition of $T^{\star}$ and $T^{\star\star}$ from \eqref{DefTstar} and the fact that $T^{\delta}$ satisfies uniquely $\delta F_M(T^\delta)=\epsilon_0$, 
provided that  $\epsilon_0$ is taken to be 
\begin{equation}\label{EqEpsilon}
\epsilon_0 < \min\Big( \frac{C_2\delta_0}{C_3},  \frac{C_2^2}{2C_4(1+ M \tilde\Csf(M))^3 }, \frac{C_{24}^2}{4C_4(1+ M\tilde \Csf(M))^3}  \Big).
\end{equation}
We then prove that 
\begin{equation}\label{T_deltaMin}
T^{\delta} \leq \min\{T^{\star}, T^{\star\star}\}.
\end{equation}
In fact, if $T^{\star} < T^{\delta}$, we have from \eqref{EnergyNormDelta} that
\[
\cE((\sigma^{\delta},\vec u^{\delta})(T^{\star})) \leq C_3 \delta F_M(T^{\star}) \leq C_3\delta F_M(T^{\delta}) = C_3\epsilon_0 <C_2\delta_0.
\]
And if $T^{\star\star}<T^{\delta}$, we have by \eqref{L2_NormU^d} and the definition of $T^{\delta}$ that
\begin{equation}\label{EstBoundSigmaDelta}
\begin{split}
&\|(\sigma^{\delta}, \vec u^{\delta})(T^{\delta})\|_{L^2(\Omega)}\\
 &\leq \delta \|(\sigma^M, \vec u^M)(T^{\delta})\|_{L^2(\Omega)} + \|(\sigma^d, \vec u^d)(T^{\delta})\|_{L^2(\Omega)}  \\
&\leq C_2 \delta F_M(T^{\delta}) + \sqrt{C_4}  \delta^{\frac32} \Big( \sum_{j=1}^N |\Csf_j| e^{\lambda_j T^{\delta}} + \max(0,M-N)\Big(\max_{N+1\leq j\leq M}|\Csf_j|\Big) e^{\frac23 \nu_0\Lambda T^{\delta}}\Big)^{\frac32}.
\end{split}
\end{equation}
Notice from \eqref{AssumeLambdaN} that for $N+1\leq j\leq M$,
\[
|\Csf_j| \delta e^{\frac23 \nu_0\Lambda T^{\delta}} < \frac{|\Csf_j|}{|\Csf_{j_m}|}( \delta |\Csf_{j_m}|  e^{\lambda_1 T^{\delta}})<  \frac{|\Csf_j|}{|\Csf_{j_m}|} \delta F_M(T^{\delta}) =  \frac{|\Csf_j|}{|\Csf_{j_m}|} \epsilon_0.
\] 
Then, it follows from \eqref{EstBoundSigmaDelta} that
\[
\begin{split}
\|(\sigma^{\delta}, \vec u^{\delta})(T^{\delta})\|_{L^2(\Omega)} &\leq C_2\delta F_M(T^{\delta}) + \sqrt{C_4}\delta^{\frac32}(1+ M\tilde \Csf(M))^{\frac32} F_M^{\frac32}(T^\delta) \\
&\leq C_2\epsilon_0+  \sqrt{C_4}(1+ M\tilde \Csf(M))^{\frac32}\epsilon_0^{\frac32}.
\end{split}
\]
Using \eqref{EqEpsilon} again, we deduce 
\[
\|(\sigma^{\delta}, \vec u^{\delta})(T^{\delta})\|_{L^2(\Omega)}< 2 C_2 \epsilon_0 = 2C_2 \delta F_M(T^{\delta}).
\]
which also contradicts  the definition of $T^{\star\star}$. 

Once we have \eqref{T_deltaMin}, we then get from \eqref{L2_NormU^d} and \eqref{L^2NormU_2^M} that 
\[
\begin{split}
\|\vec u^{\delta}(T^{\delta})\|_{L^2(\Omega)} &\geq \delta \|\vec u^M(T^{\delta})\|_{L^2(\Omega)} - \|\vec u^d(T^{\delta})\|_{L^2(\Omega)}\\
&\geq C_{24} \delta F_M(T^{\delta}) - \sqrt{C_4}  \delta^{\frac32} \sum_{j=1}^N |\Csf_j| e^{\lambda_j T^{\delta}} \\
&\qquad\quad-\sqrt{C_4}  \delta^{\frac32} \max(0,M-N)\Big(\max_{N+1\leq j\leq M}|\Csf_j|\Big) e^{\frac23 \nu_0\Lambda T^{\delta}}\Big)^{\frac32}.
\end{split}
\]
Therefore, 
\begin{equation}
\begin{split}
\|\vec u^{\delta}(T^{\delta})\|_{L^2(\Omega)} &\geq C_{24}\epsilon_0 - \sqrt{C_4}(1+ M\tilde \Csf(M))^{\frac32}\epsilon_0^{\frac32} \geq \frac{C_{24}\epsilon_0}2 >0.
\end{split}
\end{equation}
The inequality \eqref{BoundU_2Tdelta} is proven by taking $\delta_0$ satisfying \eqref{EqDelta_0},  $\epsilon_0$ satisfying \eqref{EqEpsilon} and $m_0=\frac12C_{24}$. This ends the proof of Theorem \ref{ThmNonlinear}.

\section*{Acknowledgments}

The author is deeply grateful to Prof. Jean-Marc Delort and Prof.  Olivier Lafitte  for their fruitful  discussions on this paper.  Thanks also go to Prof. Jeffrey Rauch for his advice on this study. The author wishes to thank Assoc. Prof. Quốc-Anh Ngô for his encouragement.  The author would also thank the hospitality of Centre International de Rencontre Math\'ematique and Universit\'e de Montr\'eal during the visit where  parts of this work were accomplished.  This work is supported by a grant from R\'egion \^Ile-de-France.

\appendix

\section{The precise value  of $\mu_c(k,\Xi)$}\label{FormulaMuK}
In this appendix, we prove Proposition \ref{PropMuC}(1). The equality \eqref{MuC_kMax} can be seen immediately from the definition of $\bB_{k,0,\mu}$. 

Note that the quotient 
\[
 \frac{\xi_-(\phi'(-1))^2+\xi_+(\phi'(1))^2}{\int_{-1}^1 ((\phi'')^2+2k^2(\phi')^2+k^4\phi^2) dx_2}
 \]
is bounded because of the embedding $H^2((-1,1))\hookrightarrow C^1((-1,1))$.
To prove \eqref{EqMuK}, let us consider the Lagrangian functional 
\begin{equation}
\cL_k(\phi, \beta) = \beta \Big(\int_{-1}^1((\phi'')^2+2k^2(\phi')^2+k^4\phi^2)dx_2 -1)- ( \xi_-(\phi'(-1))^2+\xi_+ (\phi'(1))^2)
\end{equation}
for any $\phi \in \tilde H^2((-1,1))$ and $\beta \neq 0$. Using Lagrange multiplier theorem, the extremal points of the quotient 
\[
 \frac{\xi_-(\phi'(-1))^2+\xi_+(\phi'(1))^2}{\int_{-1}^1 ((\phi'')^2+2k^2(\phi')^2+k^4\phi^2) dx_2}
\]
are necessarily  the stationary points of $(\phi_k,\beta_k)$ of $\cL_k$, which satisfy  
\begin{equation}\label{Psi_kConstraintAt1}
\int_{-1}^1((\phi_k'')^2+2k^2(\phi_k')^2+k^4\phi_k^2)dx_2 =1, 
\end{equation}
and 
\begin{equation}\label{EqConstraintPsi_k}
\beta_k \int_{-1}^1 (\phi_k'' \omega''+2k^2 \phi_k'\omega'+ k^4\phi_k\omega) dx_2 =\xi_- \phi_k'(-1) \omega'(-1) + \xi_+ \phi_k'(1) \omega'(1)
\end{equation}
for all $\omega \in \tilde H^2((-1,1))$. We obtain from \eqref{EqConstraintPsi_k} after  taking integration by parts that 
\[
\phi_k^{(4)}-2k^2 \phi_k''+k^4\phi_k =0,
\]
and
\[
\begin{cases}
\beta_k(\phi_k'''(1)+2k^2 \phi_k'(1))\omega(1) = 0, \\
(\beta_k \phi_k''(1) -\xi_+\phi_k'(1))\omega'(1)=0,\\
\beta_k( \phi_k'''(-1)+2k^2 \phi_k'(-1))\omega(-1) = 0, \\
(\beta_k\phi_k''(-1) + \xi_- \phi_k'(-1))\omega'(-1)=0,
\end{cases}
\]
for all $\omega \in \tilde H^2((-1,1))$. This yields
\begin{equation}\label{BoundaryPhi_k}
\begin{cases}
\beta_k \phi_k''(1) -\xi_+\phi_k'(1)=0,\\
\beta_k \phi_k''(-1) +\xi_-\phi_k'(-1)=0.
\end{cases}
\end{equation}

Hence, $\phi_k$ is of the form 
\[
\phi_k(x_2) =(Ax_2+B) \sinh(kx_2) +(Cx_2+D)\cosh(kx_2),
\]
with $A,B,C,D$ are four constants such that $A^2+B^2+C^2+D^2>0$.  Since $\phi_k\in \tilde H^2((-1,1))$, we get 
\[
\begin{cases}
(A+B)\sinh k + (C+D)\cosh k=0,\\
(-A+B)\sinh(-k)+  (-C+D) \cosh(-k)=0.
\end{cases}
\]
It yields
\begin{equation}\label{GammaEqualDC-AB}
C= -B \tanh(k) \quad\text{and} \quad D=  -A\tanh(k).
\end{equation}
We then compute 
\[
\phi_k'(x_2)= (A+kD+kCx_2) \sinh(kx_2) + (C+kB+kAx_2) \cosh(kx_2)
\]
and 
\[
\phi_k''(x_2) = (2kC+k^2B+ k^2Ax_2) \sinh(kx_2) + (2kA+ k^2D +k^2 Cx_2) \cosh(kx_2).
\]
Substituting these formulas into \eqref{BoundaryPhi_k}, we have 
\[
\begin{cases}
\beta_k \Big( (2kC+k^2(B+A)) \sinh k + (2kA +k^2(D+C)) \cosh k \Big)\\
\qquad\quad= \xi_+ \Big( (A+k(D+C)) \sinh k + (C+k(B+A)) \cosh k \Big),\\
\beta_k \Big( (2kC+ k^2(B-A)) \sinh(- k) + (2kA+ k^2(D-C))\cosh(-k) \Big)\\
\qquad\quad= - \xi_- \Big( (A+k(D-C)) \sinh(-k) + (C+k(B-A))\cosh(-k)).
\end{cases}
\]
Thanks to  \eqref{GammaEqualDC-AB}, that reduces to 
\[
\begin{cases}
2k \beta_k (C\sinh k+A\cosh k)  \\
\qquad =  \xi_+ \Big( A+k(D+C) \sinh k + (C+k(B+A)) \cosh k \Big),\\
2k \beta_k (-C\sinh k+ A\cosh k) \\
\qquad=  \xi_-\Big((A+k(D-C)) \sinh(k) - (C+k(B-A))\cosh(k) \Big).
\end{cases}
\]
Equivalently, 
\begin{equation}\label{SystA-B_hyper}
\begin{cases}
2k \beta_k \Big( A+B + (A-B)\cosh(2 k)\Big) =  \xi_+ \Big( (A-B)\sinh(2k) + 2k (A+B) \Big),\\
2k \beta_k \Big( A-B + (A+B)\cosh(2 k)\Big) =\xi_-  \Big( (A+B) \sinh(2k) +2 k(A-B) \Big).
\end{cases}
\end{equation}
Then, $(A,B)$ is a solution of the following system
 \begin{equation}\label{SystA-Btransform}
\begin{cases}
A \Big(2k(1+\cosh(2k))\beta_k - \xi_+ (2k+\sinh(2k)) \Big) \\
\qquad=B \Big( 2k(\cosh(2k)-1) \beta_k- \xi_+ (\sinh(2k)-2k) \Big), \\
A \Big(2k(1+\cosh(2k))\beta_k- \xi_- (2k+\sinh(2k)) \Big) \\
\qquad=-B \Big( 2k(\cosh(2k)-1)  \beta_k- \xi_- (\sinh(2k)-2k) \Big). \\
\end{cases}
\end{equation}
System \eqref{SystA-Btransform} admits a nontrivial solution if and only 
\begin{equation}\label{EqBeta_k}
\begin{split}
&\Big(2k(1+\cosh(2k))\beta_k - \xi_+ (2k+\sinh(2k)) \Big) \\
&\qquad\quad \times \Big( 2k(\cosh(2k)-1)  \beta_k- \xi_- (\sinh(2k)-2k) \Big)\\
&=-\Big(2k(1+\cosh(2k))\beta_k- \xi_- (2k+\sinh(2k)) \Big) \\
&\qquad\quad \times \Big( 2k(\cosh(2k)-1) \beta_k- \xi_+ (\sinh(2k)-2k) \Big).
\end{split}
\end{equation}
We rewrite Eq. \eqref{EqBeta_k} as a quadratic equation of $\beta_k$, that is 
\begin{equation}\label{QuadraticBeta_k}
\begin{split}
4k^2 (\cosh^2(2k) -1) \beta_k^2&- 2k( \sinh(2k)\cosh(2k) -2k)(\xi_+ +\xi_-) \beta_k\\
&\qquad+(\sinh^2(2k)-4k^2) \xi_+ \xi_-   = 0. 
\end{split}
\end{equation}
The discriminant is
\[
\begin{split}
\Delta_{k,\Xi} &= k^2( \sinh(2k)\cosh(2k)-2k)^2 (\xi_+ +\xi_-)^2 \\
&\qquad- 4k^2 (\cosh^2(2k)-1)(\sinh^2(2k)-4k^2) \xi_+ \xi_- \\
&=  k^2 ( \sinh(2k) -2k\cosh(2k))^2 (\xi_++ \xi_-)^2 \\
&\qquad+  k^2 \sinh^2(2k)(\sinh^2(2k)-4k^2) (\xi_+ -\xi_-)^2.
\end{split}
\]
 Because $\tanh(2k)<2k$ for all $k>0$ and  $\xi_+^2+\xi_-^2> 0$, we have $\Delta_{k,\xi}$ is always positive. Hence, we have that \eqref{QuadraticBeta_k} has two roots
\[
\beta_{k,\pm}=\frac{k( \sinh(2k)\cosh(2k) -2k)(\xi_+ +\xi_-)  \pm \sqrt{\Delta_{k,\Xi}}}{4k^2 \sinh^2(2k) }.
\]
We take the higher value $\beta_{k,+}>0$ and then solve the system \eqref{SystA-Btransform} as $\beta_k=\beta_{k,+}$. 

If $\xi_-\geq \xi_+$, we have
\[
\begin{split}
&4\sinh^2k \Big( 2k(\cosh(2k)-1) \beta_{k,+}- \xi_+ (\sinh(2k)-2k) \Big)\\
&= (\sinh(2k)\cosh(2k)-2k) (\xi_- -\xi_+) - 2(\sinh(2k)-2k\cosh(2k))\xi_+ + \frac1k \sqrt{\Delta_{k,\Xi}} \\
&>0.
\end{split}
\]
Then, we obtain from $\eqref{SystA-Btransform}_1$ that 
\begin{equation}\label{EqA_gamma_k}
B= A \frac{2k(1+\cosh(2k))\beta_{k,+} - \xi_+ (2k+\sinh(2k)) }{ 2k(\cosh(2k)-1)  \beta_{k,+}- \xi_+ (\sinh(2k)-2k)} =: Aa_{k,\Xi}.
\end{equation}
So that 
\[
(A,B,C,D) = A(1,a_{k,\Xi}, -a_{k,\Xi}\tanh k, -\tanh k)
\]
with $A\neq 0$ and $\phi_k(x_2) = A z_k(x_2)$, with 
\[
 z_k(x_2) = (x_2+a_{k,\Xi})\sinh(kx_2) -\tanh k (a_{k,\Xi} x_2+1) \cosh(kx_2).
\]
We find $A$ from \eqref{Psi_kConstraintAt1}, such that
\begin{equation}\label{EqAz_k}
A^2 \int_{-1}^1 ((z_k'')^2+2k^2 (z_k')^2+k^4z_k^2) dx_2=1.
\end{equation}

If $0<\xi_-<\xi_+$, that will imply 
\[
2k(\cosh(2k)-1)\beta_{k,+} -\xi_- (\sinh(2k)-2k) >0.
\]
We further get from $\eqref{SystA-Btransform}_2$ that 
\[
B= -A \frac{2k(1+\cosh(2k))\beta_{k,+} -\xi_- (2k+\sinh(2k)) }{ 2k(\cosh(2k)-1)  \beta_{k,+} -\xi_- (\sinh(2k)-2k)}=: -A b_{k,\Xi}.
\]
So that, we have 
\[
(A,B,C,D) = A(1,-b_{k,\Xi}, b_{k,\Xi}\tanh k, -\tanh k)
\]
with $A\neq 0$ and $\phi_k(x_2) = A w_k(x_2)$, with 
\[
w_k(x_2) =  (x_2-b_{k,\Xi})\sinh(kx_2) +  (b_{k,\Xi}x_2-1)\tanh k\cosh(kx_2).
\]
We still find $A$ from \eqref{Psi_kConstraintAt1}, 
\begin{equation}\label{EqAw_k}
A^2 \int_{-1}^1 ((w_k'')^2+2k^2 (w_k')^2+k^4w_k^2) dx_2=1.
\end{equation}

We have just shown that  
\[
\begin{split}
\mu_c(k,\Xi) &= \max_{\phi \in \tilde H^2((-1,1))} \frac{ \xi_-(\phi'(-1))^2+\xi_+ (\phi'(1))^2}{\int_{-1}^1 ((\phi'')^2+2k^2(\phi')^2+k^4\phi^2)dx_2} \\
&= \frac1{4k \sinh^2(2k) } 
\left( \begin{split} 
&( \sinh(2k)\cosh(2k) -2k)(\xi_+ +\xi_-) \\
&+ \left(\begin{split} 
&( \sinh(2k) -2k\cosh(2k))^2 (\xi_++ \xi_-)^2  \\
& + \sinh^2(2k)(\sinh^2(2k)-4k^2) (\xi_+ -\xi_-)^2
\end{split}\right)^{\frac12}
\end{split}\right).
\end{split}
\]
That variational problem is attained by functions $\phi_k(x_2) = A z_k(x_2)$, where $A$ satisfies \eqref{EqAz_k} or $\phi_k(x_2)=Aw_k(x_2)$, where $A$ satisfies \eqref{EqAw_k}.
The equality \eqref{EqMuK} is shown and  the proof of the first part of Proposition \ref{PropMuC} then follows.

\section{Asymptotic behavior of $\mu_c(k,\Xi)$ in low/high regime of wave number}\label{AppLimitMuK}
Let us prove Proposition \ref{PropMuC}(2). Clearly, we have that $\mu_c(k,\Xi)$ is a decreasing function in $k>0$. It yields \eqref{Mu_Climit}.

We first consider $k\to 0$. Let us recall the Taylor's expansion of $\sinh(2k)$ and $\cosh(2k)$. We have 
\[
\sinh(2k) = 2k + \frac43 k^3 + \frac4{15}k^5 +O(k^6), \quad\text{and}\quad \cosh(2k) =1+ 2k^2 + \frac23 k^4+ O(k^5).
\]
We deduce that 
\[
\frac{\sinh(2k)\cosh(2k)-2k}{4k\sinh^2(2k)} = \frac{ \frac16 + \frac2{15} k^2 +O(k^3)}{\frac12 +\frac23 k^2+O(k^3)} = \frac13 -\frac8{15}k^2+ O(k^3),
\]
that 
\[
\frac{\sinh(2k)-2k\cosh(2k)}{4k\sinh^2(2k)} = \frac{ -\frac83 -\frac{16}{15} k^2+ O(k^3)}{16+ \frac{64}3k^2+ O(k^3)}= -\frac16+\frac7{45} k^2 + O(k^3)
\]
and that 
\[
\frac{\sinh^2(2k)(\sinh^2(2k)-4k^2)}{16k^2 \sinh^4(2k)} = \frac{\frac{16}3 + \frac{128}{45}k^2+O(k^2)}{64+ \frac{256}3 k^2+O(k^2)} = \frac1{12} -\frac1{15} k^2+O(k^3).
\]
We deduce that
\begin{equation}\label{1stOrderLimitMuK}
\lim_{k\to 0} \mu_c(k,\Xi) = \frac13(\xi_+ +\xi_-) +  \sqrt{ \frac1{36}(\xi_++\xi_-)^2+ \frac1{12}(\xi_+-\xi_-)^2}.
\end{equation}
That will imply \eqref{EqMuC-MuC_k}, i.e. 
\[
\mu_c^s(\Xi) = \frac13 \Big(\xi_+ +\xi_- +\sqrt{\xi_+^2 -\xi_+\xi_- +\xi_-^2}\Big).
\]
 Furthermore, we have that 
\begin{equation}\label{2ndOrderLimitMuK}
\lim_{k\to 0} \frac{\mu_c(k,\Xi)-\mu_c^s(\Xi)}{k^2}= -\frac2{15} \Big( 4(\xi_++\xi_-) + \frac{4\xi_+^2-\xi_+\xi_- +4\xi_-^2}{\sqrt{\xi_+^2-\xi_+\xi_-+\xi_-}} \Big).
\end{equation}
Two limits \eqref{1stOrderLimitMuK} and \eqref{2ndOrderLimitMuK} help us to get \eqref{LimitMuK}.

For high wave number, i.e. $k\to +\infty$, we can see that
\[
\begin{split}
\frac{\sinh(2k)\cosh(2k)-2k}{\sinh^2(2k)} 
&= \frac{1-e^{-8k}-8k e^{-4k}}{1+e^{-8k}-2e^{-4k}} \leq 2,
\end{split}
\]
that 
\[
\begin{split}
\frac{\sinh(2k)-2k\cosh(2k)}{\sinh^2(2k)}  = \frac12 \frac{1-2k-(1+2k)e^{-4k}}{e^{2k}+e^{-6k}-2e^{-2k}} \leq 1.
\end{split}
\]
Hence, 
\[
\begin{split}
\mu_c(k,\Xi) &= \frac1{4k} 
\left( \begin{split} 
& \frac{\sinh(2k)\cosh(2k) -2k}{\sinh^2(2k)}(\xi_+ +\xi_-) \\
&+ \left(\begin{split} 
&\Big( \frac{\sinh(2k) -2k\cosh(2k)}{\sinh^2(2k)} \Big)^2 (\xi_++ \xi_-)^2  \\
& + \Big(1-\frac{4k^2}{\sinh^2(2k)}\Big) (\xi_+ -\xi_-)^2
\end{split}\right)^{\frac12}
\end{split}\right) \\
&\leq \frac1{4k}\Big( 2(\xi_++\xi_-) + \sqrt{2(\xi_+^2+\xi_-^2)} \Big).
\end{split}
\]
That implies \eqref{UpperBoundMuK}. The proof of the second assertion of Proposition \ref{PropMuC} is complete.

\section{Proof of Proposition \ref{PropMuC}(3)}\label{AppMuC}

In this appendix, we prove Proposition \ref{PropMuC}(3). We first show that 
\begin{equation}\label{EqSupMuC}
\mu_c^s(\Xi) =\sup\limits_{\phi\in \tilde H^2((-1,1))} \frac{\xi_-(\phi'(-1))^2+\xi_+(\phi'(1))^2}{\int_{-1}^1 (\phi'')^2 dx_2}
\end{equation}
Indeed, let 
\[
\tilde \mu_c(\Xi) :=\sup\limits_{\phi\in \tilde H^2((-1,1))} \frac{\xi_-(\phi'(-1))^2+\xi_+(\phi'(1))^2}{\int_{-1}^1 (\phi'')^2 dx_2}
\]
and then prove that $\mu_c^s(\Xi)=\tilde \mu_c(\Xi)$. Clearly, we have $\mu_c(k,\Xi) \leq \tilde \mu_c(\Xi)$ for all $k\in \R\setminus \{0\}$. It yields $\mu_c^s(\Xi) \leq \tilde \mu_c(\Xi)$.  It suffices to show that $\tilde \mu_c(\Xi) \geq \mu_c^s(\Xi)$.
For any $\varepsilon>0$, from the definition of $\mu_c(\Xi)$,  we fix a function $\phi_\varepsilon \in \tilde H^2((-1,1))$ such that 
\[
 \frac{\xi_-(\phi_\varepsilon'(-1))^2+\xi_+(\phi_\varepsilon'(1))^2}{\int_{-1}^1 (\phi_\varepsilon'')^2 dx_2}\geq \tilde \mu_c(\Xi)  -\varepsilon. 
\]
Let $k\neq 0$ be small enough, we then obtain 
\[
 \frac{\xi_-(\phi_\varepsilon'(-1))^2+\xi_+(\phi_\varepsilon'(1))^2}{\int_{-1}^1 ((\phi_\varepsilon'')^2+2k^2(\phi_\varepsilon')^2+k^4\phi_\varepsilon^2) dx_2} > \frac{\xi_-(\phi_\varepsilon'(-1))^2+\xi_+(\phi_\varepsilon'(1))^2}{\int_{-1}^1 (\phi_\varepsilon'')^2 dx_2} - \varepsilon.  
\]
That implies
\[
\mu_c(k,\Xi) > \tilde\mu_c(\Xi) - 2\varepsilon. 
\]
We deduce that $\tilde \mu_c(\Xi) =\sup_{k\in \R\setminus\{0\}}\mu_c(k,\Xi)$, i.e. \eqref{EqSupMuC}.

Then, we show that 
\begin{equation}\label{FormulaMuC}
\max\limits_{\phi\in \tilde H^2((-1,1))} \frac{\xi_-(\phi'(-1))^2+\xi_+(\phi'(1))^2}{\int_{-1}^1 (\phi'')^2 dx_2}
 = \frac13\Big( \xi_++\xi_- +\sqrt{\xi_+^2 -\xi_+\xi_- +\xi_-^2}\Big).
\end{equation}
Let us consider the Lagrangian functional 
\begin{equation}
\cL_0(\phi, \beta) = \beta\Big( \int_{-1}^1(\phi'')^2dx_2-1\Big) - \xi_-(\phi'(-1))^2-\xi_+ (\phi'(1))^2.
\end{equation}
for any $\phi \in \tilde H^2((-1,1))$ and $\beta \neq 0$. Owing to Lagrange multiplier theorem, the extrema points of the quotient 
\[
 \frac{\xi_-(\phi'(-1))^2+\xi_+(\phi'(1))^2}{\int_{-1}^1 (\phi'')^2 dx_2}
\]
are necessarily the stationary points  $(\phi_0,\beta_0)$ of $\cL_0$, which satisfy 
\begin{equation}\label{Psi_0ConstraintAt1}
\int_{-1}^1(\phi_0'')^2dx_2 =1, 
\end{equation}
and 
\begin{equation}\label{EqConstraintPsi_0}
\beta_0 \int_{-1}^1 \phi_0'' \omega''dx_2 -  (\xi_- \phi_0'(-1) \omega'(-1) + \xi_+ \phi_0'(1) \omega'(1))=0
\end{equation}
for all $\omega \in \tilde H^2((-1,1))$. We obtain from \eqref{EqConstraintPsi_0} after taking integration by parts that 
\[
\phi_0^{(4)} =0 \quad\text{on }(-1,1). 
\]
and 
\begin{equation}\label{BoundaryPhi_0}
\begin{cases}
\phi_0''(1) =\xi_+\phi_0'(1),\\
\phi_0''(-1)=-\xi_-\phi_0'(-1).
\end{cases}
\end{equation}


Hence, $\phi_0$ is of the form 
\[
\phi_0(x_2) =(x_2^2-1)(Ax_2+B).
\]
Substituting this form of $\phi_0$ into \eqref{BoundaryPhi_0}, we have that 
\[
\begin{cases}
\beta_0(3A+B) =  \xi_+(A+B), \\
\beta_0(3A-B) =  \xi_-(A-B).
\end{cases}
\]
Hence, 
\begin{equation}\label{SystAB_0}
\begin{cases}
A(3\beta_0-\xi_+) + B(\beta_0 -\xi_+) =0,\\
A(3\beta_0 -\xi_-) - B(\beta_0-\xi_-) =0.
\end{cases}
\end{equation}
System \eqref{SystAB_0} admits a nontrivial solution $(A,B)$ if and only if 
\[
(3\beta_0-\xi_+)(\beta_0-\xi_-) + (3\beta_0 -\xi_-)(\beta_0 -\xi_+)=0.
\]
It yields
\begin{equation}\label{EqBeta_0}
3 \beta_0^2 - 2(\xi_++\xi_-) \beta_0+\xi_-\xi_+=0.
\end{equation}
The discriminant of \eqref{EqBeta_0} is
\[
\Delta_{0,\xi}= (\xi_+ +\xi_-)^2-3\xi_-\xi_+ = \xi_+^2-\xi_+\xi_- +\xi_-^2 >0.
\]
Then, Eq. \eqref{EqBeta_0}  has two  roots
\[
\beta_{0,\pm}= \frac13 \Big(\xi_++ \xi_-\pm  \sqrt{\xi_+^2-\xi_+\xi_- +\xi_-^2}\Big).
\]
We take the higher value $\beta_{0,+}$.  As  $\beta_0=\beta_{0,+}$, we have  from $\eqref{SystAB_0}_2$ that 
\[
A(3\beta_{0,+}-\xi_-) = B(\beta_{0,+}-\xi_-).
\]
It is obvious that 
\[
3\beta_{0,+}-\xi_- = \xi_+ + \sqrt{\xi_+^2-\xi_+\xi_-+\xi_-^2} >0.
\]
Then we have
\[
A= B \frac{\beta_{0,+}-\xi_-}{3\beta_{0,+}-\xi_-} 
\]
and 
\[
 \phi_0(x_2)= B z_0(x_2), \quad\text{with } z_0(x_2)=\Big(  \frac{\beta_{0,+}-\xi_-}{3\beta_{0,+}-\xi_-} x_2+1\Big) (x_2^2-1).
\]
We continue using \eqref{Psi_0ConstraintAt1} to find a non-zero $B$. This yields
\[
B^2 \int_{-1}^1 (z_2''(x_2))^2 dx_2=1.
\]
That is equivalent to 
\[
8B^2 \Big( 3  \frac{(\beta_{0,+}-\xi_-)^2}{(3\beta_{0,+}-\xi_-)^2}+4\Big) = 1,
\]
this yields 
\[
B = \pm \frac1{2\sqrt 2} \frac{3\beta_{0,+}-\xi_-}{ \sqrt{39\beta_{0,+}^2 -30\beta_{0,+}\xi_-+7\xi_-^2}}.
\]

That means, we observe
\[
\max_{\phi\in \tilde H^2((-1,1))} \frac{\xi_-(\phi'(-1))^2+\xi_+(\phi'(1))^2}{\int_{-1}^1(\phi'')^2 dx_2} =  \frac13 \Big(\xi_++ \xi_-+ \sqrt{\xi_+^2-\xi_+\xi_- +\xi_-^2}\Big).
\]
That variational problem is attained by functions 
\[
\phi_0(x_2) = \pm\frac1{2\sqrt 2}\frac{3\beta_{0,+}-\xi_-}{ \sqrt{39\beta_{0,+}^2 -30\beta_{0,+}\xi_-+7\xi_-^2}} \Big( \frac{\beta_{0,+}-\xi_-}{3\beta_{0,+}-\xi_-}x_2+1\Big)(x_2^2-1).
\]
Combining \eqref{EqSupMuC} and \eqref{FormulaMuC}, we obtain Proposition \ref{PropMuC}(3).

\section{Comments on paper of Ding, Zi and Li}

In \cite{DJL20}, the authors Ding, Zi and Li construct an approximate solution generated by the maximal normal mode,  $(\sigma^a, \vec u^a, q^a)(t,x) = \delta e^{\lambda_1(k) t}\vec U_1(\vec x)$ with $k$ being fixed such that $\frac{2\Lambda}3 <\lambda_1(k)<\Lambda$. Applying Proposition \ref{PropLocalSolution},  the  nonlinear equations \eqref{EqPertur}-\eqref{BoundLinearized} with the initial data 
\[
(\sigma^{\delta}, \vec u^{\delta}, q^{\delta})(0)= (\sigma^a,\vec u^a,q^a)(0).
\] 
 admits a strong solution $(\sigma^{\delta},\vec u^{\delta}) \in C^0([0,T^{\max}),H^1 \times H^2)$ with an associated pressure $q^{\delta} \in C^0([0,T^{\max}), L^2)$. Let  $T^{\delta}$ such that $\delta e^{\lambda_1 T^{\delta}}=\epsilon_0 \ll 1$.  We define
\[
\begin{split}
T^{\star} &:= \sup\Big\{t\in (0,T^{\max})| \cE(\sigma^{\delta}(t), \vec u^{\delta}(t)) \leq C\delta_0\}>0,\\
T^{\star\star} &:= \sup \{t\in (0,T^{\max})| \|(\sigma^{\delta},\vec u^{\delta})(t)\|_{L^2(\Omega)} \leq C \delta e^{\lambda_1t}\Big\}>0.
\end{split}
\]
Then for all $t\leq \min\{T^{\delta}, T^{\star}, T^{\star\star}\}$, we have 
\[
\cE^2(\sigma^{\delta}(t), \vec u^{\delta}(t)) +\|\partial_t \vec u^{\delta}(t)\|_{L^2(\Omega)}^2+\int_0^t \|\nabla \partial_t \vec u^{\delta}(\tau)\|_{L^2(\Omega)}^2 d\tau \leq C \delta^2e^{2\lambda_1t}.
\]

In \cite[Proposition 5.2]{DJL20}, they claim that the difference functions 
\[
(\sigma^d, \vec u^d, q^d) = (\sigma^{\delta}, \vec u^{\delta}, q^{\delta})-(\sigma^a, \vec u^a, q^a)
\] enjoy 
\begin{equation}\label{IneProp52}
\|(\sigma^d, \vec u^d)\|_{L^2(\Omega)}^2 \leq C \delta^3 e^{3\lambda_1 t}
\end{equation}
for all $\mu >0$. We believe that \eqref{IneProp52} needs to be corrected, not for all $\mu>0$. Precisely, we are in doubt about inequality \textbf{(137)} in that paper, that is for all $t\leq \min\{T^{\delta}, T^{\star}, T^{\star\star}\}$, 
\begin{equation}\label{Ine137}
\begin{split}
&\|\sqrt{\rho_0+\sigma^{\delta}(t)} \partial_t \vec u^d(t)\|_{L^2(\Omega)}^2 +\Lambda\mu \|\nabla\vec u^d(t)\|_{L^2(\Omega)}^2+ \mu\int_0^t \|\nabla \partial_t \vec u^d(s)\|_{L^2(\Omega)}^2  ds \\
&\leq \Lambda \Big( \int_0^t\|\sqrt{\rho_0 +\sigma^{\delta}(s)}\vec u^d(s)\|_{L^2(\Omega)}^2+\Lambda\mu\|\nabla \vec u^d(s)\|_{L^2(\Omega)}^2\Big) \\
&\qquad\qquad +\Lambda\|\sqrt{\rho_0+\sigma^{\delta}(t)} \vec u^d(t)\|_{L^2(\Omega)}^2+ C\delta^3 e^{3\lambda_1 t}. 
\end{split}
\end{equation}
Due to \eqref{Ine137} and the following inequality 
\begin{equation}\label{Ine135}
\begin{split}
&\frac{d}{dt}\|\sqrt{\rho_0 +\sigma^{\delta}(t)}\vec u^d(t)\|_{L^2(\Omega)}^2\\ &=2\int_{\Omega} (\rho_0+\sigma^{\delta}(t)) \vec u^d(t) \cdot \partial_t \vec u^d(t) d\vec x + \int_{\Omega} \partial_t \sigma^{\delta}(t)|\vec u^d(t)|^2 d\vec x \\
 &\leq  \frac1{\Lambda} \|\sqrt{\rho_0+\sigma^{\delta}(t)} \partial_t \vec u^d(t)\|_{L^2(\Omega)}^2 +\Lambda \|\sqrt{\rho_0+\sigma^{\delta}(t)} \vec u^d(t)\|_{L^2(\Omega)}^2 + C\delta^3  e^{3\lambda_1 t},
\end{split}
\end{equation}
it is claimed in \cite[\textbf{(138)}]{DJL20} that 
\begin{equation}\label{Ine138}
\begin{split}
&\frac{d}{dt} \|\sqrt{\rho_0 +\sigma^{\delta}(t)}\vec u^d(t)\|_{L^2(\Omega)}^2 + \Big(\|\sqrt{\rho_0+\sigma^{\delta}(t)} \partial_t\vec u^d(t)\|_{L^2(\Omega)}^2 +\Lambda\mu \|\nabla\vec u^d(t)\|_{L^2(\Omega)}^2 \Big)\\
&\leq  \Lambda \int_0^t \Big( \|\sqrt{\rho_0+\sigma^{\delta}(s)} \partial_t\vec u^d(s)\|_{L^2(\Omega)}^2 +\Lambda\mu \|\nabla\vec u^d(s)\|_{L^2(\Omega)}^2   \Big) ds  \\
&\qquad\qquad +\Lambda\|\sqrt{\rho_0+\sigma^{\delta}(t)} \vec u^d(t)\|_{L^2(\Omega)}^2+ C\delta^3 e^{3\lambda_1 t}. 
\end{split}
\end{equation}
The inequality \eqref{IneProp52} is followed by applying Gronwall's inequality to \eqref{Ine138}. 

We shall explain the arguments  of \eqref{Ine137} in \cite{DJL20}. First, we still have
 \begin{equation}
\begin{split}
&\|\sqrt{\rho_0+\sigma^{\delta}(t)} \partial_t u^d(t)\|_{L^2(\Omega)}^2 + 2 \mu\int_0^t \|\nabla \partial_t \vec u^d(s)\|_{L^2(\Omega)}^2  ds \\
&\qquad-2\int_0^t \int_{2\pi L\bT} ( \xi_+ |u_1^d(s,x_1,1)|^2 +\xi_- |u_1^d(s, x_1,-1)|^2)  dx_1ds\\
&= \int_{\Omega} g\rho_0' |u_2^d(t)|^2 d\vec x +\Big( \int_{\Omega}(\rho_0+\sigma^{\delta}(t))|\partial_t\vec  u^d(t)|^2 d\vec x\Big)\Big|_{t=0}\\
&\qquad+ \int_0^t \int_{\Omega}(2\partial_t f^{\delta}(s)+2g \vec u^{\delta}(s)\cdot \nabla \sigma^{\delta}(s) e_2-\partial_t\sigma^{\delta}(s) \partial_t \vec u^d(s))\cdot \partial_t \vec u^d(s) ds.
\end{split}
\end{equation}
We  estimate
 \begin{equation}
\begin{split}
&\|\sqrt{\rho_0+\sigma^{\delta}(t)} \partial_t \vec u^d(t)\|_{L^2(\Omega)}^2 + 2 \mu\int_0^t \|\nabla \partial_t \vec u^d(s)\|_{L^2(\Omega)}^2  ds \\
&\qquad-2\int_0^t \int_{2\pi L\bT} ( \xi_+ |u_1^d(s,x_1,1)|^2 +\xi_- |u_1^d(s, x_1,-1)|^2)  dx_1ds\\
&\leq  \int_{\Omega} g\rho_0' |u_2^d(t)|^2 d\vec x + C \delta^3 e^{3\lambda_1t}.
\end{split}
\end{equation}
That implies 
\begin{equation}\label{IneU_tD_Sharp}
\begin{split}
&\|\sqrt{\rho_0+\sigma^{\delta}(t)} \partial_t \vec u^d(t)\|_{L^2(\Omega)}^2 + 2 \mu\int_0^t \|\nabla \partial_t \vec u^d(s)\|_{L^2(\Omega)}^2  ds \\
&\qquad\qquad -2 \int_0^t \int_{2\pi L\bT} ( \xi_+ |\partial_t u_1^d(s,x_1,1)|^2 +\xi_- |\partial_t u_1^d(s, x_1,-1)|^2)  dx_1ds\\
&\leq  \Lambda^2 \int_{\Omega}(\rho_0+\sigma^{\delta}(t))|\vec u^d(t)|^2 d\vec x+ \Lambda\mu \int_{\Omega}|\nabla \vec u^d(t)|^2 d\vec x \\
&\qquad\qquad -\Lambda \int_{2\pi L\bT} ( \xi_+ |u_1^d(t,x_1,1)|^2 +\xi_- |u_1^d(t, x_1,-1)|^2)  dx_1+ C\delta^3e^{3\lambda_1 t}.
\end{split}
\end{equation}
By using the inequality
\[
\Lambda\mu\|\nabla\vec u^d\|_{L^2(\Omega)}^2 \leq \Lambda^2\mu \int_0^t\|\nabla \vec u^d(s)\|_{L^2(\Omega)}^2 ds + \mu \int_0^t\|\nabla \partial_t\vec u^d(s)\|_{L^2(\Omega)}^2ds
\]
and the identity
\[
\begin{split}
&\Lambda\int_{2\pi L\bT} ( \xi_+ |u_{1}^d(t,x_1,1)|^2 +\xi_- |u_{1}^d(t, x_1,-1)|^2)  dx_1 \\
&= \Lambda^2 \int_0^t\int_{2\pi L\bT} ( \xi_+ |u_{1}^d(s,x_1,1)|^2 +\xi_- |u_{1}^d(s, x_1,-1)|^2)  dx_1ds \\
&\qquad + \int_0^t \int_{2\pi L\bT} ( \xi_+ |\partial_t u_1^d(s,x_1,1)|^2 +\xi_- |\partial_t u_1^d(s, x_1,-1)|^2)  dx_1ds\\
&\qquad- \int_0^t\int_{2\pi L\bT} ( \xi_+ |\Lambda u_{1}^d- \partial_t u_1^d|^2(s,x_1,1) +\xi_- |\Lambda u_{1}^d- \partial_t u_1^d|^2(s, x_1,-1))  dx_1ds,  
\end{split}
\]
it is obtained from \eqref{IneU_tD_Sharp} that (see \textbf{(134)} in \cite{DJL20})
\begin{equation}\label{Ine134}
\begin{split}
&\|\sqrt{\rho_0+\sigma^{\delta}(t)} \partial_t \vec u^d(t)\|_{L^2(\Omega)}^2 \\
&\quad + \frac12 \Lambda\Big( \mu \|\nabla \vec u^d(t)\|_{L^2(\Omega)}^2  - \int_{2\pi L\bT} ( \xi_+ |u_1^d(t,x_1,1)|^2 +\xi_- |u_1^d(t, x_1,-1)|^2)  dx_1\Big) \\
&\quad+ \frac12 \int_0^t \Big( \mu\|\nabla \partial_t \vec u^d(s)\|_{L^2(\Omega)}^2 - \int_{2\pi L\bT} ( \xi_+ | \partial_t u_1^d(s,x_1,1)|^2 +\xi_- | \partial_t u_1^d(s, x_1,-1)|^2)  dx_1\Big)ds \\
&\leq \Lambda^2 \|\sqrt{\rho_0+\sigma^{\delta}(t)}\vec u^d(t)\|_{L^2(\Omega)}^2 + C\delta^3 e^{\lambda_1 t} \\
&\quad+ \frac32 \Lambda^2 \int_0^t\Big( \mu\|\nabla \vec u^d(s)\|_{L^2(\Omega)}^2  - \int_{2\pi L\bT}(\xi_+ |u_1^d(s,x_1,1)|^2 +\xi_- |u_1^d(s, x_1,-1)|^2)  dx_1\Big)ds\\
&\quad+ \frac32  \int_0^t \int_{2\pi L\bT} ( \xi_+ |\Lambda u_1^d- \partial_t u_1 ^d|^2(s,x_1,1) +\xi_- |\Lambda u_{1}^d- \partial_t u_1^d|^2(s, x_1,-1))  dx_1ds.
\end{split}
\end{equation}
Integrating \eqref{Ine135} in time from 0 to $t$ and using \eqref{Ine134} and Young's inequality,  the authors deduce \eqref{Ine137} without providing any detailed explanation.

However, we  observe by integrating \eqref{Ine135} in time  that 
\[
\begin{split}
\|\sqrt{\rho_0 +\sigma^{\delta}(t)}\vec u^d(t)\|_{L^2(\Omega)}^2  \leq \frac1{\Lambda} \int_0^t e^{\Lambda(t-s)} \|\partial_t \vec u^d(s)\|_{L^2(\Omega)}^2 ds + Ce^{3\lambda_1 t}. 
\end{split}
\]
Then the l.h.s of \eqref{Ine137} will be bounded by 
\begin{equation}\label{Ine137_1}
\begin{split}
&\|\sqrt{\rho_0+\sigma^{\delta}(t)} \partial_t \vec u^d(t)\|_{L^2(\Omega)}^2 +\Lambda\mu \|\nabla\vec u^d(t)\|_{L^2(\Omega)}^2+ \mu\int_0^t \|\nabla\partial_t \vec u^d(s)\|_{L^2(\Omega)}^2  ds \\
&\leq \Lambda \int_0^t e^{\Lambda(t-s)} \|\partial_t \vec u^d(s)\|_{L^2(\Omega)}^2 ds  + \frac12 \Lambda\mu\|\nabla\vec u^d(t)\|_{L^2(\Omega)}^2 \\
&\quad+ \frac12\mu \int_0^t \|\nabla \partial_t \vec u^d(s)\|_{L^2(\Omega)}^2 ds + \frac32 \Lambda^2\mu \int_0^t\|\nabla \vec u^d(s)\|_{L^2(\Omega)}^2 ds\\
&\quad+ \Lambda \int_{2\pi L\bT} ( \xi_+ |u_1^d(t,x_1,1)|^2 +\xi_- |u_1^d(t, x_1,-1)|^2)  dx_1 \\
&\quad-\Lambda^2  \int_0^t \int_{2\pi L\bT}(\xi_+ |u_1^d(s,x_1,1)|^2 +\xi_- |u_1^d(s, x_1,-1)|^2)  dx_1ds\\
&\quad+ \int_0^t \int_{2\pi L\bT} ( \xi_+ |\Lambda u_{1}^d- \partial_tu_1^d|^2(s,x_1,1) +\xi_- |\Lambda u_{1}^d- \partial_t u_1^d|^2(s, x_1,-1))  dx_1+ C\delta^3e^{3\lambda_1t}.
\end{split}
\end{equation}
We are not clear about the way in \cite{DJL20} to remove all integral terms over $2\pi L\bT$ in the r.h.s of \eqref{Ine137_1} to get \eqref{Ine137} for all $\mu>0$, especially the following term 
\[
 \int_0^t \int_{2\pi L\bT} ( \xi_+ |\Lambda u_{1}^d- \partial_tu_1^d|^2(s,x_1,1) +\xi_- |\Lambda u_{1}^d- \partial_t u_1^d|^2(s, x_1,-1))  dx_1.
 \]


\begin{thebibliography} {999999}



\bibitem{Cha61}
\textsc{S. Chandrasekhar}, Hydrodynamics and Hydromagnetic Stability, Oxford University Press, London, 1961.


\bibitem{DJL20}
\textsc{S. Ding, Z. Ji, Q. Li},  Rayleigh--Taylor instability for nonhomogeneous incompressible fluids with Navier-slip boundary conditions, \textit{Math. Meth. Appl. Sci. }(2020), pp. 1--25. 


\bibitem{DLX18}
\textsc{S. Ding, Q. Li, Z. Xin}, Stability analysis for the incompressible Navier–Stokes equations with Navier boundary conditions, \textit{J. Math. Fluid Mech.} \textbf{ 20} (2018), pp. 603--629.

\bibitem{GH03}
\textsc{Y. Guo, H. J. Hwang},
On the dynamical Rayleigh--Taylor instability, \textit{Arch. Rational Mech. Anal.} \textbf{167} (2003), pp. 235--253.

\bibitem{GS95}
\textsc{Y. Guo, W. Strauss}, Instability of periodic BGK equilibria, \textit{Comm. Pure Appl. Math.} \textbf{ 48} (1995), pp. 861--894.


\bibitem{GT11}
\textsc{Y. Guo, I. Tice}, Linear Rayleigh-Taylor instability for viscous, compressible fluids, 
\textit{SIAM J. Math. Anal.} \textbf{42}  (2011), pp. 1688--1720.

\bibitem{Gre00}
\textsc{E. Grenier}, On the nonlinear instability of Euler and Prandtl equations, \textit{Commun.Pure Appl. Math.} \textbf{ 53}, (2000), pp.  1067--1091.

\bibitem{JJN13}
\textsc{F. Jiang, S. Jiang, G. Ni},
Nonlinear instability for nonhomogeneous incompressible viscous fluids, \textit{Sci. China Math.} \textbf{56} (2013),  pp 665--686.

\bibitem{HL03}
\textsc{B. Helffer,  O. Lafitte}, Asymptotic methods for the eigenvalues of the Rayleigh equation for the linearized Rayleigh-Taylor instability, \textit{ Asymptotic Analysis} \textbf{ 33} (2003), pp. 189--235.

\bibitem{Kato}
\textsc{T. Kato}, Perturbation theory for linear operators, Springer-Verlag, Berlin, 1995, Reprint of the 1980 edition.

\bibitem{Kull91}
\textsc{H. Kull}, Theory of the Rayleigh-Taylor instability, \textit{Phys. Rep.} \textbf{206} (1991), pp. 197--325.

\bibitem{Laf01}
\textsc{O. Lafitte}, Sur la phase lin\'eaire de l'instabilit\'e de Rayleigh-Taylor. S\'eminaire Equations aux D\'eriv\'ees Partielles du Centre de Math\'ematiques de l'Ecole Polytechnique, Ann\'ee 2000–2001.

\bibitem{LN20}
\textsc{O. Lafitte, T.-T. Nguyễn}, Spectral analysis of  the incompressible viscous Rayleigh-Taylor system,  \textit{Water Waves} \textbf{4} (2022), pp. 259--305. 

\bibitem{Lin98}
\textsc{J. D. Lindl}, Inertial Confinement Fusion, Springer, 1998. 


\bibitem{Nav23}
\textsc{C. L. Navier}, Sur les lois de l'\'equilibre et du mouvement des corps \'elastiques, \textit{ Mem. Acad. R. Sci. Inst. France} (1827), pp. 63--69.


\bibitem{Rem00}
\textsc{B. A. Remington, R. P. Drake, H. Takabe, D. Arnett},  A review of astrophysics experiments on intense lasers, \textit{Phys. Plasmas} \textbf{7} (2000), pp. 1641--1652.

\bibitem{Str83}
\textsc{J.W. Strutt (Lord Rayleigh)}, Investigation of the character of the equilibrium of an incompressible heavy fluid of variable density, \textit{Proc. London Math. Soc} \textbf{14} (1883), pp. 170--177.



\bibitem{Tay50}
\textsc{G. Taylor}, The instability of liquid surfaces when accelerated in a direction perpendicular to their planes, \textit{Proc. R. Soc. Lond. Ser. A} \textbf{201} (1950), pp. 192--196.


\bibitem{Zhou17_1}
\textsc{Y. Zhou},  Rayleigh–Taylor and Richtmyer–Meshkov instability induced flow, turbulence, and mixing. I., \textit{Phys. Rep.} \textbf{720–722} (2017), pp. 1--136.

\bibitem{Zhou17_2}
\textsc{Y. Zhou}, Rayleigh–Taylor and Richtmyer–Meshkov instability induced flow, turbulence, and mixing. II., \textit{Phys. Rep.} \textbf{723–725} (2017), pp. 1--160.

\end{thebibliography}
\end{document}